\theoremstyle{definition}
\newtheorem{thm}{Theorem}[section]
\newtheorem{rem}[thm]{Remark}
\newtheorem{lem}[thm]{Lemma}
\newtheorem{prop}[thm]{Proposition}
\newtheorem{cor}[thm]{Corollary}
\newcommand\reals{\mathbb{R}}
\newcommand\R{{\mathbb{R}}}
\renewcommand\natural{\mathbb{N}}
\newcommand\nat{\mathbb{N}}
\newcommand\N{\mathbb{N}}
\newcommand\sphere{\mathbb{S}}
\newcommand\eps{\varepsilon}
\newcommand\ga{\gamma}
\newcommand\dist{{\rm dist}}
\newcommand\rad{{\rm rad}}
\newcommand\diam{{\rm diam}}
\newcommand\Lip{{\rm Lip}}
\newcommand\eu{{\rm e}}
\newcommand{\e}{\varepsilon}
\newcommand{\wt}{\widetilde}
\newcommand{\widebar}[1]{\mbox{\kern1.5pt\hbox{\vbox{\hrule height 0.6pt \kern0.35ex
        \hbox{\kern-0.15em \ensuremath{#1 }\kern0.0em}}}}\kern-0.1pt}
\newcommand\il{\left<}
\newcommand\ir{\right>}
\newcommand{\points}{\mathcal{P}}
\newcommand{\E}{\mathbb{E}}
\renewcommand{\P}{\mathbb{P}}
\newcommand{\Pbf}{{\bf P}}
\newcommand{\ind}[1]{\mathds{1}_{#1}}
\newcommand\dint{{\rm d}}
\newcommand{\abs}[1]{\left\vert #1 \right\vert}
\newcommand{\norm}[1]{\left\Vert #1 \right\Vert}
\newlength{\fixboxwidth}
\title{The Curse of Dimensionality for
Numerical Integration of Smooth Functions II}
\author{Aicke Hinrichs,
Erich Novak\footnote{This
author was partially supported by the DFG-Priority Program 1324.}, 
Mario Ullrich\footnote{This author was partially supported by 
DFG GRK 1523.}\\
Mathematisches Institut, Universit\"at Jena\\
Ernst-Abbe-Platz 2, 07743 Jena, Germany\\
email: a.hinrichs@uni-jena.de,
erich.novak@uni-jena.de, \\
ullrich.mario@gmail.com\\
\qquad
\\
Henryk Wo\'zniakowski\footnote{This author was partially
supported by the National Science
Foundation.
}\\
Department of Computer Science, Columbia University,\\
New York, NY 10027, USA, and\\
Institute of Applied Mathematics, University of Warsaw\\
ul. Banacha 2, 02-097 Warszawa, Poland\\
email:\ henryk@cs.columbia.edu}
\begin{document}

\maketitle

\begin{center}
\emph{
Dedicated to J. F. Traub and G. W. Wasilkowski\\
on the occasion of their 80th and 60th birthdays}
\end{center}

\newpage
\begin{abstract}
We prove the curse of dimensionality in the worst case setting
for numerical
integration for a number
of classes of smooth $d$-variate functions.
Roughly speaking, we consider different bounds for the 
directional or partial derivatives 
of $f \in C^k(D_d)$ and ask whether the curse of dimensionality 
holds for the respective classes of functions. 
We always assume that $D_d \subset \R^d$ has volume one
and we often assume additionally 
that $D_d$ is either 
convex or that its radius is proportional to $\sqrt{d}$.  
In particular, $D_d$ can be the unit cube. 
We consider various values of $k$ including the case 
$k=\infty$
which corresponds to infinitely differentiable functions.
We obtain necessary and sufficient conditions,
and in some cases a full characterization for the curse of dimensionality.
For
infinitely differentiable functions
we prove the curse if the bounds on the successive derivatives
are appropriately large. The proof technique is based on a volume estimate of
a neighborhood of
the convex hull of $n$ points which decays exponentially fast in~$d$.
For $k=\infty$, we also study conditions for quasi-polynomial, 
weak and uniform 
weak tractability. In particular, weak tractability holds 
if all directional derivatives are bounded by one. It is still an open problem
if weak tractability holds if all partial derivatives are bounded by one. 
\end{abstract}

\section{Introduction}  \label{sec:Introduction}

We study the problem of numerical integration, i.e., of approximating
the integral
\begin{equation}\label{eq:int}
S_d(f) = \int_{D_d} f(x) \, \dint x
\end{equation}
over an open subset $D_d\subset \R^d$ of 
Lebesgue measure $\lambda_d(D_d)=1$ for integrable functions
$f\colon D_d\to\R$. 
In particular, we consider the case of \emph{smooth} integrands. 
The main interest is on the behavior of the minimal number of function values
that are needed in the worst case setting
to achieve an error at most $\eps>0$, 
while the dimension $d$ tends to infinity.
Note that classical examples of domains $D_d$ are the unit cube $[0,1]^d$ and 
the normalized Euclidean ball (with volume 1),
which are closed. However, we work with their interiors 
for definiteness of certain derivatives. 
Obviously, this does not change the integration problem.

We always consider sets $D_d$ for which  
$\lambda_d(D_d)=1$. This assumption guarantees that the integration problem
is properly normalized and suffices to establish the curse of dimensionality
for a number of classes considered in this paper. 
To obtain necessary and sufficient conditions on the curse, we need 
further assumptions on $D_d$. Typically we assume that $D_d$ is the unit cube or 
that $D_d$ is convex or that $D_d$ satisfies  property $(\Pbf)$ which roughly says that
the radii of $D_d$ are proportional to $\sqrt{d}$. 

For arbitrary sequences $(D_d)_{d\in\N}$, we prove that numerical
integration suffers 
from the \emph{curse of dimensionality} for certain classes of smooth functions 
with suitable bounds on the Lipschitz constants of directional or partial derivatives
that may depend on $d$. 
The curse of dimensionality means that the minimal 
number of function evaluations is exponentially large in $d$. 
The Lipschitz constants are always defined with respect to the Euclidean distance.
This paper is a continuation of our paper~\cite{HNUW12}
with the following new results:

\begin{itemize} 

\item
We provide nontrivial volume estimates, 
see Theorem 2.1 and 2.3.
We prove that the volume of a neighborhood of the convex hull
of $n$ arbitrary points is exponentially small  in $d$.  

\item
We obtain matching lower and upper bounds for Lipschitz functions, 
see Theorem 3.1.
We prove that if the radii of $D_d$ are proportional to $\sqrt{d}$ 
then the curse holds iff 
$\limsup_{d\to\infty}L_{0,d}\, \sqrt{d}>0$, where $L_{0,d}$ is the Lipschitz constant 
of functions.

\item
We obtain matching lower and upper bounds for functions with a 
Lipschitz gradient, see Theorem 4.1.
We prove that if the radii of convex $D_d$ are proportional to $\sqrt{d}$ 
then the curse holds iff 
$\limsup_{d\to\infty}L_{0,d}\, \sqrt{d}>0$ and 
$\limsup_{d\to\infty}L_{1,d}\, d>0$, where $L_{1,d}$ 
is the Lipschitz constant 
of first directional derivatives of functions. 

\item
We provide  lower and upper bounds 
for functions with higher
smoothness $k>1$, see Theorem 5.1. 
Our lower bounds are sometimes better than those presented in~\cite{HNUW12},
whereas the upper bounds are new. Unfortunately, our lower and upper bounds do not 
always match.
We prove that if the radii of $D_d$ are proportional to $\sqrt{d}$ 
then the curse holds if 
$\limsup_{d\to\infty}L_{0,d}\, \sqrt{d}>0$ and 
$\limsup_{d\to\infty}L_{j,d}\,d>0$ for all $j=1,\dots,k$, where $L_{j,d}$ 
is the Lipschitz constant 
of $j$th directional derivatives of functions. 
On the other hand, if $\lim_{d\to\infty}L_{j,d}d^{(j+1)/2}=0$ for some $j\in\{0,1,\dots,k\}$
then the curse does not hold. Hence, our bounds match only if $j\in\{0,1\}$.  

\item
We obtain results for $C^\infty$ functions, see Theorem~\ref{thm5} and \ref{prop:uwt}.
In particular, in this case we also study 
quasi-polynomial,  weak and uniform weak tractability.
Quasi-polynomial tractability means that the logarithm of the minimal number
of function values 
that are needed to guarantee an error $\eps>0$ 
is bounded proportionally to $(1+\ln\,d)(1+\ln\,\e^{-1})$,
whereas weak tractability means that this number of function values 
is not exponential in $d$ and 
$\e^{-1}$, and uniform weak tractability means that it is not exponential in any 
positive power of $d$ and $\e^{-1}$.  
In particular, we prove that weak tractability holds if all
directional derivatives are bounded by one, see Corollary~\ref{cor:ub_infinity_unit_ball}. 
It is known that
strong polynomial tractability does not hold, i.e., 
the minimal number of function values cannot be bounded by a polynomial in $\e^{-1}$
independently of $d$. It is not known 
if, in particular, we have quasi-polynomial tractability in this case.
It is also open if weak tractability holds 
for the larger class of all partial derivatives bounded by one, 
see Open Problem 2 of \cite{NW08}.
\end{itemize}

Technical tools used in this paper include:

\begin{itemize} 

\item 
Bounds for the volume of 
$\{ x \in \R^d \mid \dist(x, K) \le \gamma \}$,
where $K$ is the convex hull of $n$ points,
and $\dist$ is the Euclidean distance of a point $x$ from $K$, 
see Theorem 2.1 and~2.3.

\item
Properties of the convolution derived mainly in~\cite{HNUW12}, see Theorem 2.4.
\end{itemize} 

\section{Preliminaries and Tools} \label{sec:Preliminaries and Tools}

\subsection{Complexity} \label{subsec:Complexity}

In this section we precisely define our problem. Let $F_d$  be a class 
of continuous integrable functions $f:D_d\to\reals$. For $f\in F_d$,
we approximate the integral $S_d(f)$, see~\eqref{eq:int}, by algorithms
$$
A_{n,d}(f)=\phi_{n,d}(f(x_1),f(x_2),\dots,f(x_n)),  
$$
where $x_j\in D_d$ can be chosen adaptively and $\phi_{n,d}:\reals^n\to
\reals$ is an arbitrary mapping. Adaption means that the selection of $x_j$
 may depend on the already computed values $f(x_1),f(x_2),\dots,f(x_{j-1})$.
The (worst case) error of the algorithm $A_{n,d}$ is defined as
$$
e(A_{n,d})=\sup_{f\in F_d}|S_d(f)-A_{n,d}(f)|.
$$
Then the information complexity $n(\eps,F_d)$ is 
the minimal number of function values 
which is needed to guarantee that the error is
at most $\eps$, i.e., 
$$
n(\eps,F_d)=\min\{\,n\ |\ \ \exists\ A_{n,d}\ \ \mbox{such that}\ \ 
e(A_{n,d})\le\eps\}.
$$
Hence, we minimize $n$ over all choices of adaptive sample points $x_j$ and
mappings $\phi_{n,d}$. It is well known that as long as the class $F_d$
is convex and symmetric 
we may restrict the minimization of $n$ by
considering only nonadaptive choices of $x_j$ and  linear mappings $\phi_{n,d}$.
Furthermore, in this case we have 
$$
n(\eps,F_d)=\min\{\,n\ |\ \ \inf_{x_1,x_2,\dots,x_n\in D_d}\ \sup_{f\in F_d,\
f(x_j)=0,\, j=1,2,\dots,n} |S_d(f)|\le \eps\}, 
$$
see e.g., \cite[Lemma 4.3]{NW08}.
In this paper we always consider convex and symmetric $F_d$ 
so that we can use the last formula for $n(\eps,F_d)$. 
It is also well known that for convex and symmetric $F_d$ the total
complexity, i.e., the minimal cost of computing an $\eps$
approximation, 
insignificantly differs from the information
complexity.
For more details see, for instance,  Section 4.2.2 of Chapter 4 in~\cite{NW08}.

By the \emph{curse of dimensionality} we mean that 
$n(\eps,F_d)$ is exponentially large in $d$. 
That is, there are positive numbers $c$, $\eps_0$ and $\gamma$ such that
\begin{equation}\label{curse}
n(\eps,F_d) \ge c \, (1+\gamma)^d\ \ \ \ 
\mbox{for all}\ \ \ \eps \le \e_0\ \  \mbox{and infinitely many}\ \ d\in \natural. 
\end{equation}
There are many classes $F_d$ for which the curse of dimensionality has 
been proved for numerical integration and other multivariate problems,
see~\cite{NW08,NW10} for such examples.
In this paper we continue our work from~\cite{HNUW12}. 

\subsection{Function Classes} \label{subsec:Function Classes}

Already in \cite{HNUW12} we considered classes of functions
with bounds on the Lipschitz constants
of all successive directional derivatives up to some order $r$. 
This will also be one of the
main smoothness assumptions in this paper. 
To make clear why this is a 
natural assumption
we now comment on  
the relation between usual and directional derivatives 
in terms of norms of higher derivatives 
viewed as multilinear functionals.
To this end, let $\Omega \in \big\{ D_d, \R^d\big\}$ and
let $f\colon \Omega\to \R$ be an
$r$-times continuously differentiable function.
We denote the class of $r$-times continuously differentiable functions
on $\Omega$ by $C^r(\Omega)$. 
The corresponding
classes of infinitely differentiable functions 
are similarly denoted for $r=\infty$.

For $k=1,\dots,r$, the $k$-th derivative $f^{(k)}(x)$ at a 
point $x\in \Omega$  is naturally considered
as a symmetric $k$-linear map $f^{(k)}(x)\colon (\R^d)^k \to \R$.
Let $\sphere^{d-1}$ be the unit sphere in $\reals^d$.
For $\theta\in\sphere^{d-1}$, let 
$D^{\theta}f(x)=\lim_{h\to0}\frac1h
\bigl(f(x+h\theta)-f(x)\bigr)$ be the derivative in direction $\theta$.
For example, in the case $k=2$ the second derivative is 
the bilinear map defined by the Hessian. 

For $\theta_1,\dots,\theta_k\in \sphere^{d-1}$, the successive 
directional derivative in the directions
$\theta_1,\dots,\theta_k$ is then given as
$$ D^{\theta_k} \dots D^{\theta_1} f (x) = f^{(k)}(x)(\theta_1,\dots,\theta_k)$$
and is independent of the ordering of the derivatives.

The norm of such a $k$-linear map $A : (\R^d)^k \to \R$
is given as
$$ 
\|A\| = \sup \bigl\{ | A(\theta_1,\dots,\theta_k)| \ 
\big|\ \  \theta_1,\dots,\theta_k\in \sphere^{d-1} \bigr\}.
$$
Since the polarization constant of a Hilbert space equals one, 
see \cite[Proposition 1.44]{D99}, this norm is also equal to
$$ 
\|A\| = \sup \bigl\{ | A(\theta,\dots,\theta)| \ \big|\ \  \theta \in \sphere^{d-1} \bigr\}.
$$

For $k\le r$ and $f\in C^r(\Omega)$, let us denote
$$ 
\Vert f^{(k)} \Vert = \Vert f^{(k)} \Vert_\infty 
= \sup_{x\in \Omega} \Vert f^{(k)} (x) \Vert
$$
and
$$ 
\Lip(f^{(k)}) = \sup_{ x,y\in \Omega, x \neq y} 
\frac{\Vert f^{(k)}(x)-f^{(k)}(y)\Vert}{\Vert x-y\Vert_2}.
$$
Then
$$ 
\Lip(f^{(k)}) = \sup_{\theta_1,\dots,\theta_k\in \sphere^{d-1}}\, 
\Lip(D^{\theta_1}\dots D^{\theta_k} f) 
= \sup_{\theta\in \sphere^{d-1}}\, \Lip(D^{\theta}\dots D^{\theta} f). 
$$
Moreover, for $k<r$ we have
$$ \Vert f^{(k+1)} \Vert = \Lip(f^{(k)}). 
$$
If we need to emphasize the domain $\Omega$ in these notations, we will write
$ \Vert f |_{\Omega} \Vert$ and $\Lip(f|_{\Omega})$.
As usual, $f^{(0)}=f$ in the case $k=0$ with 
$$
\norm{f}_\infty = \sup_{x\in\Omega} |f(x)|.
$$
We will use these facts without further comment.

We now describe the function classes we consider in this paper.
The functions shall be defined on $\Omega$.
To make lower bounds for the information complexity as strong as possible, the 
function class should be as small as
possible. Analogously, to make upper bounds as strong as possible, the function
class should be as large as possible. That is why we use two kinds of function
classes. For lower bounds, we require bounds for the Lipschitz constants of certain
directional derivatives. 

To make this precise, fix an $r \in \N_0:=\{0,1,\dots\}$ 
and a double sequence 
$$L = (L_{j,d})_{j\le r, d\in\N}$$ 
of positive numbers.
Now we define the function classes
\begin{equation}\label{eq:class}
  C_d^{r}(L) \,=\,\bigl\{f\in C^{r}(D_d)\ \ \big|\ \ \ \|f\|_\infty \le1,\ 
        \Lip( f^{(j)}) \le L_{j,d}\ \mbox{for all } j\le r\, \bigr\},
\end{equation}
and
$$
\widebar{C}_d^{r}(L) \,=\,\bigl\{f |_{D_d}\ \big|\ \ \ f\in C^{r}(\R^d),\ 
\|f\|_\infty \le1,\ 
        \Lip( f^{(j)}) \le L_{j,d}\ \mbox{for all } j\le r\, \bigr\}.
$$
Obviously,
$$ \widebar{C}_d^{r}(L) \subset C_d^{r}(L),$$
and usually $\widebar{C}_d^{r}(L)$ is a proper subset of $C_d^{r}(L)$. 
This notation is also for $r=\infty$.  

Although we are mainly interested in results for $C_d^{r}(L)$, 
we will sometimes 
prove lower bounds on 
the information complexity for $\widebar{C}_d^{r}(L)$ which imply the same
lower bounds for~$C_d^{r}(L)$.

\subsection{Convex Hull}\label{subsec:Convex Hull}

As already mentioned in the introduction,
the lower bounds on multivariate integration presented in this paper
are based on a volume estimate of a neighborhood of
certain sets in $\R^d$.
Generally, these sets are convex hulls (or their neighborhoods)
of $n$ points in the set $\Omega\subset\R^d$.
Since we need the $\sqrt{d}$-scaling of the distance throughout this
article, we will omit it in the notation from now on.
For instance, we denote by $A_\delta$ the
$(\delta\sqrt{d})$-neighborhood 
of~$A\subset\R^d$, 
which is defined by
\begin{equation}\label{eq:neighbor}
A_\delta = \{ x \in \R^d \mid \dist(x, A) \le \delta\sqrt{d} \} ,
\end{equation}
where $\dist(x,A)=\inf_{a\in A}\|x-a\|_2$ denotes the Euclidean distance of $x$ from $A$.
We also denote by $B_\delta^d(x)$
the $d$-dimensional Euclidean
ball with center $x\in\R^d$, and radius $\delta\sqrt{d}$, i.e.,
\[
B_\delta^d(x)\,=\, \{y\in\R^d \mid \norm{y-x}_2\le\delta\sqrt{d}\}.
\]
We begin with a result that holds for arbitrary sets $D_d$
as long as their \emph{radius} is small enough.
The radius of a set $D_d\subset\R^d$ is defined by
\[
\rad(D_d) \,=\, \inf_{x\in \R^d} \sup_{y\in D_d}
        \Vert y-x\Vert_2.
\]
We prove the following theorem.

\begin{thm} \label{thm:volume-rad}
Let $D_d\subset\R^d$ be bounded with $\lambda_d(D_d)=1$ 
and let 
$R_d=\frac{\rad(D_d)}{\sqrt{d}}$.
Let $K$ be the convex hull of $n$ points $x_1,\dots,x_n\in D_d$.
Then 
\[
\lambda_d(K_\delta) \,<\, n\, \biggl((R_d + 2\delta) \sqrt{\frac{\pi\eu}{2}} \biggr)^d.
\]
This is exponentially small for $R_d<\sqrt{\frac{2}{\pi \eu}}\approx0.4839$ for large $d$ 
and $\delta<1/\sqrt{2\pi\eu}-\frac12 R_d$.
\end{thm}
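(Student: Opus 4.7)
The plan is to cover $K_\delta$ with $n$ Euclidean balls, one centered at each ``scaled midpoint'' $\tfrac{1}{2}x_i$, then simply sum their volumes. By definition of $\rad(D_d)$, I may choose $c\in\R^d$ with $D_d\subset\{y:\|y-c\|_2\le R_d\sqrt{d}\}$, and after translation assume $c=0$, so that $\|x_i\|_2\le R_d\sqrt{d}$ for every $i$.

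The key averaging step is: \emph{for every $z\in K=\conv(x_1,\dots,x_n)$ there is an index $i$ with $\|z-\tfrac{1}{2}x_i\|_2\le\tfrac{1}{2}R_d\sqrt{d}$.} Writing $z=\sum_j\lambda_j x_j$ with $\lambda_j\ge0$ and $\sum_j\lambda_j=1$, and using $\sum_i\lambda_i x_i=z$ to cancel the cross term,
\[
\sum_{i}\lambda_i\,\Bigl\|z-\tfrac{1}{2}x_i\Bigr\|_2^2
\,=\,\|z\|_2^2-\Bigl\langle z,\sum_i\lambda_i x_i\Bigr\rangle+\tfrac{1}{4}\sum_i\lambda_i\|x_i\|_2^2
\,=\,\tfrac{1}{4}\sum_i\lambda_i\|x_i\|_2^2\,\le\,\tfrac{1}{4}R_d^{2}d.
\]
Some $i$ therefore attains no more than the average.

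Combining with the triangle inequality: any $y\in K_\delta$ has some $z\in K$ with $\|y-z\|_2\le\delta\sqrt{d}$, so there exists $i$ with $\|y-\tfrac{1}{2}x_i\|_2\le\tfrac{1}{2}(R_d+2\delta)\sqrt{d}$. Hence
\[
K_\delta\,\subset\,\bigcup_{i=1}^{n}B^{d}_{(R_d+2\delta)/2}\bigl(\tfrac{1}{2}x_i\bigr),
\qquad
\lambda_d(K_\delta)\,\le\,n\,V_d\Bigl(\tfrac{1}{2}(R_d+2\delta)\sqrt{d}\Bigr)^{d},
\]
where $V_d=\pi^{d/2}/\Gamma(d/2+1)$. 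The standard Stirling estimate $\Gamma(d/2+1)\ge(d/(2e))^{d/2}$ is equivalent to $V_d\,d^{d/2}\le(2\pi e)^{d/2}$, and plugging this in turns the right-hand side into $n\bigl((R_d+2\delta)\sqrt{\pi e/2}\bigr)^{d}$, which is the claim. Strictness of the inequality is immediate since for $n\ge 2$ the covering balls overlap (and for $n=1$ the Stirling bound itself is strict for $d\ge 1$).

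The main obstacle, and the only nontrivial ingredient, is the choice of ball centers. A naive covering by balls centered at the $x_i$'s only yields a bound of order $n\bigl((R_d+\delta)\sqrt{2\pi e}\bigr)^{d}$, worse by a factor of roughly $2^d$ for small $\delta$. The trick is that the midpoints between each vertex $x_i$ and the circumcenter $c$ are tailor-made for the identity $\sum_i\lambda_i x_i=z$: two of the three terms in the expansion of $\sum_i\lambda_i\|z-\tfrac{1}{2}x_i\|_2^{2}$ cancel, shrinking the covering radius by a factor of two and producing the correct base $\sqrt{\pi e/2}$.
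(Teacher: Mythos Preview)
Your proof is correct and follows essentially the same route as the paper: cover $K$ by $n$ balls of radius $\tfrac{1}{2}R_d\sqrt{d}$ centered at the midpoints $\tfrac{1}{2}(c+x_i)$, enlarge by $\delta\sqrt{d}$ to cover $K_\delta$, and apply the Stirling volume bound. The only difference is that the paper invokes Elekes' covering lemma as a black box, whereas your averaging identity $\sum_i\lambda_i\|z-\tfrac{1}{2}x_i\|_2^2=\tfrac{1}{4}\sum_i\lambda_i\|x_i\|_2^2$ is precisely a self-contained proof of that lemma; this makes your argument pleasantly independent of the reference, but the underlying geometry is identical.
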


Some of our results will be based on this estimate and thus,
for convenience, whenever we refer to \emph{sets $(D_d)$ with small radius}
we simply mean that $\lambda(D_d)=1$ and 
\begin{equation}
\label{smallradius}
 \limsup_{d\to\infty} \frac{\rad(D_d)}{\sqrt{d}} <\sqrt{\frac{2}{\pi \eu}}.
\end{equation} 

Note that, unfortunately, this result does not cover the most natural
case of the unit cube for which
$R_d=1/2$, but the Euclidean  ball of volume 1 is covered.
Because of the 
importance of the unit cube 
we will treat it
separately after the proof of the theorem.

Let us also comment on the case that $D_d^p$ is the ball of volume one in the $\ell_p^d$-norm
for $1\le p<\infty$.
Recall that volume and radius of the unit ball $B_p^d$ of $\ell_p^d$ are given by
$$
 \lambda_d(B_d^p) = \frac{2^d \Gamma\big( 1+\frac{1}{p} \big)^d}{\Gamma\big( 1+\frac{d}{p} \big)}
 \qquad \mbox{and} \qquad
 \rad(B_d^p) = d^{\max\{0,1/2-1/p\}}.
$$
Hence the radius of the volume normalized ball of $\ell_p^d$ is
\begin{equation}
 \label{eq:radlp} 
 \rad(D_d^p) = \frac{\Gamma\big( 1+\frac{d}{p} \big)^{1/d}}{2 \Gamma\big( 1+\frac{1}{p} \big)} 
 \, d^{\max\{0,1/2-1/p\}}. 
\end{equation}
Using Stirling's approximation we obtain that 
$$
 \limsup_{d\to\infty} \frac{\rad(D_d^p)}{\sqrt{d}} = 
 \begin{cases}
   \infty & \  \text{ if }\, 1\le p <2,\\
   \frac{1}{2 (p\,\eu)^{1/p} \Gamma(1+1/p)} & \  \text{ if }\, 2\le p <\infty.\\
\end{cases}
$$
Hence $D_d^p$ is a set of small radius iff $2\le p < p^*$ where $p^*\in (2,\infty)$ is the unique solution of 
$$ 
 2 (p^*\,\eu)^{1/p^*} \Gamma\Bigl(1+\frac{1}{p^*}\Bigr) = \sqrt{\frac{\pi \eu}{2}}.
$$
We checked numerically using Matlab that $p^*\approx170.5186$.

\begin{proof}
Observe that for bounded $D_d$ the infimum in the definition of the radius $\rad(D_d)$
is actually a minimum.
Let $z\in\R^d$ be a point where this minimum is attained.
By the result of Elekes~\cite{E86}, the convex hull $K$ is contained
in the union of the $n$ balls, say $C^{(i)}$, with center $\frac{z+x_i}{2}$,
$i=1,\dots,n$, and radius $(R_d/2)\sqrt{d}$.
Recall that $\lambda_d(B_\delta^d) <  \left( \delta\sqrt{2 \pi \eu}\right)^{d}$,
see e.g.~$(6)$ in \cite{HNUW12}.
This implies that
\[
K_\delta \,\subset\, \bigcup_{i=1}^n C^{(i)}_\delta.
\]
Thus, for any $i$ we have 
\[
\lambda_d(K_\delta) \,\le\, n\, \lambda_d(C^{(i)}_\delta)
\,<\, n\, \Bigl((R_d/2+\delta)\,\sqrt{2 \pi \eu}\Bigr)^d.
\]
\end{proof}

We now turn to the case of the unit cube.
First, we state a lemma that bounds the volume of the
intersection of the cube with a single ball. Additionally, 
we estimate 
a value of the involved constant that will be 
important later.

\begin{lem} \label{lem:volume}
Let $z=(1/2,\dots,1/2)$ be the midpoint of the cube and assume that
$$|x_i-z_i| =\eta\ \ \ \mbox{for}\  i=1,\dots,d.
$$
Then, for every
$\delta<\sqrt{\eta^2+1/12}$, there exists a constant
$\ga=\ga(\delta,\eta)<1$ such that
\[
\lambda_d\bigl(B_\delta^d(x)\cap[0,1]^d\bigr) \,\le\, \ga^d.
\]
In particular, we numerically check that 
$\gamma(1/4+1/100, 1/4) \,<\, \frac{7}{8}$.
\end{lem}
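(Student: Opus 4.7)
The plan is to interpret the volume probabilistically and apply a Chernoff-type bound. Writing
$$ \lambda_d\bigl(B_\delta^d(x)\cap[0,1]^d\bigr) \,=\, \P\Bigl(\sum_{i=1}^d (Y_i-x_i)^2 \le \delta^2 d\Bigr) $$
with $Y=(Y_1,\dots,Y_d)$ uniform on $[0,1]^d$, the coordinates $(Y_i-x_i)^2$ are independent and each has expectation $\tfrac{1}{12}+\eta^2$, strictly larger than $\delta^2$ by hypothesis. So the event we want to bound is a large deviation below the mean, and its probability should decay exponentially in $d$.

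Concretely, for every $t>0$ the exponential Markov inequality gives
$$ \lambda_d\bigl(B_\delta^d(x)\cap[0,1]^d\bigr) \,\le\, \eu^{t\delta^2 d}\prod_{i=1}^d \int_0^1 \eu^{-t(y-x_i)^2}\,\dint y \,=\, \bigl(\eu^{t\delta^2} M(t)\bigr)^d, $$
where $M(t):=\int_0^1 \eu^{-t(y-x_i)^2}\,\dint y$ is the same for every $i$: the involution $y\mapsto 1-y$ shows that this integral depends only on $|x_i-1/2|=\eta$. Setting $\gamma(\delta,\eta):=\inf_{t>0}\eu^{t\delta^2}M(t)$ then gives the inequality claimed in the lemma.

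To see $\gamma(\delta,\eta)<1$ whenever $\delta^2<\eta^2+1/12$, expand at $t=0$:
$$ \eu^{t\delta^2} M(t) \,=\, 1 + t\bigl(\delta^2 - (\tfrac{1}{12}+\eta^2)\bigr) + O(t^2). $$
The linear coefficient is strictly negative by hypothesis, so the product drops below $1$ for all sufficiently small $t>0$, forcing $\gamma(\delta,\eta)<1$. For the concrete assertion $\gamma(1/4+1/100,\,1/4)<7/8$, one has $\delta^2 = 0.0676$ while $\tfrac{1}{12}+\eta^2 = 7/48\approx 0.1458$, so the hypothesis holds with plenty of slack; the bound $<7/8$ is then obtained by either a one-dimensional numerical minimization of $t\mapsto\eu^{t\delta^2}M(t)$ or by exhibiting a convenient value of $t$ for which the product already falls below $7/8$. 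The main obstacle is purely this numerical step, since $M(t)$ is an incomplete Gaussian integral with no clean closed form; the qualitative bound $\gamma<1$ itself is immediate from the first-order expansion above.
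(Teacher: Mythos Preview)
Your proof is correct and follows essentially the same route as the paper: interpret the volume probabilistically, apply the exponential Markov/Chernoff bound with a free parameter $t$ (the paper calls it $\alpha$), factor by independence, and show the one-dimensional factor is below~$1$ by examining its derivative at $t=0$, which yields exactly $\delta^2-\eta^2-1/12$. The paper additionally records the specific choice $\alpha=9/2$ for the numerical verification that $\gamma(1/4+1/100,1/4)<7/8$, which you may want to cite to make that step fully explicit.
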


\begin{proof}
The result was proven earlier by 
Dyer, F{\"u}redi, McDiarmid in \cite{DFM90} for 
$\eta=\delta=1/4$. We follow the same arguments.
First note that, by symmetry, it is enough to consider 
$x_i=z_i+\eta$, $i=1,\dots,d$.
Let $B=B_\delta^d(x)$.
By considering an independent random variable $X=(X_1,\dots,X_d)$
uniformly distributed in $[0,1]^d$,
the volume of $B\cap [0,1]^d$ 
can be expressed as
\[\begin{split}
\lambda_d \Bigl(B\cap [0,1]^d \Bigr)
\,&=\, \P \biggl( \norm{X-x}_2 \le \delta\, \sqrt{d} \biggr)
\,=\, \P \biggl( \sum_{j=1}^d (X_j-1/2-\eta)^2 \le \delta^2\, d \biggr)\\
\,&=\, \P\left(\exp\biggl\{ \alpha \biggl( \delta^2 d
        - \sum_{j=1}^d (X_j-1/2-\eta)^2 \biggr) \biggr\} \,\ge\, 1 \right),
\end{split}\]
where $\alpha>0$ is a free parameter.
Define the random variables
\[
 Y_j = \exp\left\{ \alpha \Bigl( \delta^2
                - (X_j-1/2-\eta)^2 \Bigr) \right\},
\]
such that
\[
\lambda_d \Bigl(B\cap [0,1]^d \Bigr)
\,=\, \P\biggl(\prod_{j=1}^d Y_j \,\ge\, 1 \biggr).
\]
We now use Markov's inequality
\[
 \P ( Y \ge 1 ) \,\le\, \E (Y),
\]
which holds for all non-negative random variables $Y$, and obtain
\[
\P\biggl(\prod_{j=1}^d Y_j \ge 1 \biggr)
 \le \E\biggl(\prod_{j=1}^d Y_j\biggr) = \prod_{j=1}^d \E(Y_j).
\]
The last equality follows from the independence of the $Y_j$.
It remains to prove 
$$\E(Y_1)\le\ga(\delta,\eta)<1$$ 
for a suitable choice of $\alpha$.
To this end, observe that
\[
\E(Y_1) \,=\, \int_{0}^{1} \exp\left\{ \alpha \Bigl(
        \delta^2-(1/2+\eta)^2 + 2x(1/2+\eta) - x^2 \Bigr) \right\} \,
        \dint x
\]
is a differentiable convex function in $\alpha$ with value 1 at $\alpha=0$.
Thus, $\E(Y_1)<1$ for small enough $\alpha>0$ if and only if
$\frac{\dint}{\dint\alpha} \E(Y_1)\big|_{\alpha=0}<0$.
We get
\[\begin{split}
\frac{\dint}{\dint\alpha} \E(Y_1)\big|_{\alpha=0}
\,&=\, \int_{0}^{1} \Bigl( \delta^2-(1/2+\eta)^2 +
                2x(1/2+\eta) - x^2 \Bigr) \, \dint x \\
\,&=\,  \delta^2-(1/2+\eta)^2 + (1/2+\eta) - 1/3 \\
\,&=\, \delta^2-\eta^2-1/12,
\end{split}\]
which is less than 0 if $\delta^2<\eta^2+1/12$.
This proves the statement of the lemma.

Note that we can choose
\[
\ga(\delta,\eta) \,=\,
\inf_{\alpha>0}\,  \int_{0}^{1} \exp\left\{ \alpha \Bigl(
        \delta^2-(1/2+\eta)^2 + 2x(1/2+\eta) - x^2 \Bigr) \right\} \,
        \dint x.
\]
The bound on $\ga(1/4+1/100,1/4)$ was computed
numerically by {Geogebra} and Matlab, using $\alpha=9/2$.
\end{proof}

Using this lemma we prove the following volume estimate for a
neighborhood of the convex hull of $n$ points in the cube.

\begin{thm} \label{thm:volume}
Let $K$ be the convex hull of $n$ points
$x_1, x_2, \dots , x_n \in [0,1]^d$.
Then, for every $\delta<\frac1{12}$,
there exists a constant $\wt{\ga}=\wt{\ga}(\delta)<1$ 
(depending only on $\delta$)
such that
\[
\lambda_d \Bigl(K_\delta \cap [0,1]^d\Bigr)
\,\le\, n (d+1)\, \wt{\ga}^d.
\]
In particular, $\wt\ga(1/100)<\frac{7}{8}$.
\end{thm}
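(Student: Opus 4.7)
The strategy mirrors the proof of Theorem~\ref{thm:volume-rad}: first apply Elekes' covering of the convex hull, then bound each of the $n$ resulting ball--cube intersections via Lemma~\ref{lem:volume}. The new ingredient compared to Theorem~\ref{thm:volume-rad} is the factor $(d+1)$, which comes from a further covering of each ball--cube intersection by $d+1$ balls whose centers lie in $\{1/4,3/4\}^d$ so that Lemma~\ref{lem:volume} with $\eta=1/4$ applies.

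First I would apply Elekes' result exactly as in Theorem~\ref{thm:volume-rad}: taking $z=(1/2,\dots,1/2)$, which realises $\rad([0,1]^d)=\sqrt{d}/2$, we obtain $K\subset\bigcup_{i=1}^n C^{(i)}$ with $C^{(i)}=B^d_{1/4}(y_i)$ and $y_i=(z+x_i)/2\in[1/4,3/4]^d$. Thus
\[
K_\delta \cap [0,1]^d \,\subset\, \bigcup_{i=1}^n \bigl( B^d_{1/4+\delta}(y_i) \cap [0,1]^d \bigr),
\]
and it suffices to prove, for each $i$, the single--ball estimate $\lambda_d(B^d_{1/4+\delta}(y_i) \cap [0,1]^d) \le (d+1)\tilde\gamma^d$.

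Next I would establish this single--ball estimate by covering $B^d_{1/4+\delta}(y_i)\cap[0,1]^d$ by $d+1$ auxiliary balls $B^d_{1/4+\delta'}(c_{i,k})$, $k=0,1,\dots,d$, whose centers all lie in $\{1/4,3/4\}^d$, and then applying Lemma~\ref{lem:volume} with $\eta=1/4$ to each. For Lemma~\ref{lem:volume} to yield a constant $\gamma<1$ one needs $1/4+\delta'<\sqrt{1/16+1/12}=\sqrt{7/48}$, i.e.\ $\delta'<\sqrt{7/48}-1/4\approx 0.132$; since this threshold is strictly larger than $1/12$, the whole range $\delta<1/12$ is accommodated by a suitable choice of $\delta'\in(\delta,\sqrt{7/48}-1/4)$. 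The $d+1$ centers are built directly from $y_i$: take $c_{i,0}$ to be the coordinatewise rounding of $y_i$ to the nearer element of $\{1/4,3/4\}$, and $c_{i,k}$ for $k=1,\dots,d$ to be the result of flipping the $k$-th coordinate of $c_{i,0}$. Setting $\tilde\gamma(\delta):=\gamma(1/4+\delta',1/4)$ from Lemma~\ref{lem:volume} then gives the desired constant, and the numerical bound $\tilde\gamma(1/100)<7/8$ is inherited directly from $\gamma(1/4+1/100,1/4)<7/8$ established at the end of Lemma~\ref{lem:volume}.

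The main obstacle is the geometric claim that these $d+1$ auxiliary balls really cover $B^d_{1/4+\delta}(y_i)\cap[0,1]^d$ at radius $1/4+\delta'$: a naive triangle--inequality covering by a single nearest lattice center would require radius $1/2+\delta$, which is too large for Lemma~\ref{lem:volume} to give a bound below $1$. The crucial fact to exploit is that the cube $[0,1]^d$ truncates the Euclidean ball $B^d_{1/4+\delta}(y_i)$ severely along coordinate directions, so that the part of the ball lying in the cube is far smaller than the full ball and admits a much more economical covering by only $d+1$ translates centered at the chosen lattice points $c_{i,k}$. Making this truncation argument quantitative, and in particular verifying compatibility with the precise threshold $\delta<1/12$, is where the technical effort of the proof lies.
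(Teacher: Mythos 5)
There is a genuine gap, and it sits exactly at the step you yourself flag as ``the main obstacle'': the single--ball estimate you reduce to is false on part of the range $\delta<\frac1{12}$, so no covering construction can repair it. Applying Elekes' result directly to the original points $x_1,\dots,x_n$ produces centers $y_i=(z+x_i)/2$ that can lie \emph{anywhere} in $[1/4,3/4]^d$; in particular, if some $x_i$ is the midpoint of the cube, then $y_i=z=(1/2,\dots,1/2)$. For a uniform random point $X$ in $[0,1]^d$ the squared distance $\Vert X-z\Vert_2^2$ concentrates around $d/12$, so as soon as $1/4+\delta>1/\sqrt{12}$, i.e.\ $\delta>1/\sqrt{12}-1/4\approx 0.039$, the volume of $B^d_{1/4+\delta}(z)\cap[0,1]^d$ tends to $1$ as $d\to\infty$ and hence cannot be bounded by $(d+1)\,\tilde\gamma^{\,d}$ with $\tilde\gamma<1$. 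The threshold $\sqrt{7/48}-1/4\approx 0.132$ you invoke comes from Lemma~\ref{lem:volume} with $\eta=1/4$ and is only available when \emph{every} coordinate of the center lies in $\{1/4,3/4\}$; it says nothing about centers in the interior of $[1/4,3/4]^d$. Independently of this, even for very small $\delta$ your concrete choice of $d+1$ auxiliary centers (the rounding $c_{i,0}$ of $y_i$ plus its single--coordinate flips) does not cover the intersection: for $y_i=z$ and, say, $c_{i,0}=(1/4,\dots,1/4)$, the point $x=(3/4+\delta,\dots,3/4+\delta)$ belongs to $B^d_{1/4+\delta}(y_i)\cap[0,1]^d$ but has distance roughly $(1/2+\delta)\sqrt{d}$ (or $(1/2+\delta)\sqrt{d-1}$) from every one of your $d+1$ centers, far beyond any admissible radius $(1/4+\delta')\sqrt{d}$.

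The paper avoids all of this by introducing the factor $(d+1)$ \emph{before} Elekes, via Carath\'eodory's theorem: each $x_i$ lies in the convex hull of at most $d+1$ vertices of $[0,1]^d$, so $K$ is contained in the convex hull $\widebar K$ of at most $k=n(d+1)$ cube \emph{vertices}. Applying Elekes' result to $\widebar K$ then gives $k$ balls of radius $\sqrt{d}/4$ centered at the midpoints of the segments from those vertices to the cube's center, and these midpoints have all coordinates exactly in $\{1/4,3/4\}$. Thus Lemma~\ref{lem:volume} with $\eta=1/4$ applies directly to each of the $k$ thickened balls, with no further covering step, and $\delta<\frac1{12}$ comfortably satisfies $1/4+\delta<\sqrt{1/16+1/12}$. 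This Carath\'eodory reduction is the missing idea in your argument; the factor $d+1$ counts vertices per point, not balls per ball--cube intersection.
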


\begin{proof}
We closely follow the proof of Theorem 3 in \cite{HNW11},
which  is based on the results from Elekes in \cite{E86} and
Dyer, F{\"u}redi, McDiarmid in \cite{DFM90}.

First, it follows from Carath\'eodory's theorem  that $K\subset[0,1]^d$ is
contained in the convex hull
of at most $k=n(d+1)$ vertices of the unit cube $[0,1]^d$.
So, to prove the claim of the theorem, it is enough to show that
\[
\lambda_d \Bigl(\widebar K_\delta \cap [0,1]^d\Bigr)
\le k\, \wt\ga(\delta)^d,
\]
where $\widebar{K}\subset[0,1]^d$ is the convex hull of $k$ vertices
of $[0,1]^d$.
Note that $\widebar K_\delta$ cannot be a subset of $[0,1]^d$ for
$\delta>0$.

By Elekes' result from~\cite{E86}, $\widebar K$ is contained in the
union of $k$ balls
with radius $\sqrt{d}/4$ and centers in the midpoints of the
segments from the corresponding vertex to
the midpoint of the unit cube, i.e.~the coordinates of the
midpoints of the balls, say $y_{i,j}$, satisfy
   $y_{i,j}\in\{1/4,3/4\}$.
This implies that $\widebar K_\delta$ is contained in the union of the $k$
balls with the same centers and radius $(1/4+\delta)\sqrt{d}$.
That is,
\[
\widebar K_\delta \,\subset\, \bigcup_{i=1}^k C_\delta(y_i),
\]
where $y_i=(y_{i,1},\dots,y_{i,d})$ satisfies 
    $y_{i,j}\in\{1/4,3/4\}$, $i=1,\dots,k$, $j=1,\dots,d$, and
$C_\delta(y_i)=\{x\in\R^d \mid
        \Vert x-y_i\Vert_2\le (1/4+\delta)\sqrt{d}\}$.
Hence,
\[
\lambda_d \Bigl(\widebar K_\delta \cap [0,1]^d\Bigr)
\,\le\, \sum_{i=1}^k
        \lambda_d \Bigl(C_\delta(y_i) \cap [0,1]^d\Bigr).
\]

Since $|y_{i,j}-z_j|=1/4$ for all $i=1,\dots,k$ and $j=1,\dots,d$, 
where $z=(1/2,\dots,1/2)$, 
we can apply Lemma~\ref{lem:volume} to obtain the
result with $\wt\ga(\delta)=\ga(1/4+\delta,1/4)$.
\end{proof}

\subsection{Convolutions}\label{subsec:Convolutions}

In this section we recall a result from \cite{HNUW12} which is the
main ingredient for our proof of the curse of dimensionality for
classes of smooth functions.
Roughly speaking, given an initial function, this result shows that
convolution with a (normalized) indicator function of a ball
preserves certain ``nice'' properties of the initial function, while
increasing the degree of its smoothness by one.

For convenience, throughout this section we study  functions
that are defined on $\R^d$. As an obvious corollary of 
Theorem \ref{thm:conv} below we will obtain that the restrictions 
of the constructed functions to the
unit cube 
satisfy the same bounds.

To be precise, fix a number $\delta>0$, $k\in\N$ and a sequence
$(\alpha_j)_{j=1}^k$ with $\alpha_j>0$  such that
\[
\sum_{j=1}^k \alpha_j \le 1.
\]
For example, we may take
$\alpha_j=1/k$  for $j=1,2\,\dots,k$.
Later we will let $k$ tend to infinity. Then
the sequence
$\alpha_j = c_\eta \cdot j^{-1-\eta}$ with some $\eta>0$
and $c_\eta = \frac1{\zeta (1+\eta)}$
will be our choice.
Here, $\zeta$ denotes the Riemann zeta function.

For $j=1,\dots,k$, we define the ball
$$
B_j \,=\, \Bigl\{x\in\R^d \,\big|\ \
\Vert x\Vert_2\,\le\,\alpha_j\, \delta \sqrt{d}\Bigr\}
$$
and the function $g_j\colon \R^d\to\R$ by
\begin{equation}\label{eq:gk}
g_j(x) \,=\, \frac{\ind{B_j}(x)}{\lambda_d(B_j)}
\,=\, \begin{cases}
1/\lambda_d(B_j) & \  \text{ if }\, x\in B_j,\\
0 & \  \text{ otherwise. }
\end{cases}
\end{equation}
Recall that the convolution of two functions $f$ and $g$ is defined
by
\[
(f\ast g)(x) \,=\, \int_{\R^d} f(x-t)\, g(t) \,\dint t,
\qquad x\in\R^d.
\]
Additionally recall from Section~\ref{subsec:Function Classes} that 
by the Lipschitz constant of $f$ we mean
\[
\Lip(f) = \sup_{x \not= y} \frac{|f(x)-f(y)|}{\Vert x-y\Vert_2}.
\]

\goodbreak

\begin{thm}\label{thm:conv}
For $k\in\N$ and $f\in C^r(\R^d)$,
define
$$
f_k=f\ast g_1\ast\ldots\ast g_k\quad
\mbox{with\ \ $g_j$ from \eqref{eq:gk}}.
$$
For $d\ge2$, let
$\Omega \subset \R^d$  be Lebesgue measurable and let $\Omega_\delta$ be its
neighborhood defined as in~\eqref{eq:neighbor}. Then
\begin{itemize}
\item[$(i)$]
if $f(x)=0$ for all  $x\in \Omega_\delta$ then
$f_k(x)=0$ for all $x\in \Omega$,
\item[$(ii)$] $\Lip(f_k)  \le  \Lip(f)$,
\item[$(iii)$]
if $\int_{\Omega} f(x+t)\,\dint x \,\ge\, \eps$
for all $t\in\R^d$ with $\Vert t\Vert_2\le\delta\sqrt{d}$ then
$\int_{\Omega} f_k(x) \dint x \ge \eps$,
\item[$(iv)$] for all $\ell\le r$ and
                all $\theta_1,\theta_2,\dots,\theta_r\in \sphere^{d-1}$,
\[
\Lip\Bigl(D^{\theta_{\ell}}\,D^{\theta_{\ell-1}}
\dots  D^{\theta_1}f_k\Bigr)
\le \Lip\Bigl(D^{\theta_{\ell}}\,D^{\theta_{\ell-1}}
 \dots  D^{\theta_1}f \Bigr),
\]
\item[$(v)$] $f_k\in C^{r+k}(\R^d)$,
and for all $\ell\le r$, all $j=1,\dots,k$ and
all $\theta_1,\theta_2,\dots,\theta_{\ell+j}\in \sphere^{d-1}$,
\[
\Lip\Bigl(D^{\theta_{\ell+j}}\, D^{\theta_{\ell+j-1}}
\dots  D^{\theta_{1}}f_k \Bigr)
\le \biggl(\prod_{i=1}^j \frac{1}{\delta \alpha_{i}}\biggr) \,
\Lip\Bigl(D^{\theta_{\ell}}\, D^{\theta_{\ell-1}}
\dots  D^{\theta_{1}}f
\Bigr) .
\]
\end{itemize}
In particular, 
\begin{itemize}
\item[$(vi)$] $\Lip(f_k^{(\ell)}) \le \Lip(f^{(\ell)})\ $ for all $\ell\le r$,
\item[$(vii)$] $\Lip(f_k^{(\ell+j)}) \le 
                \biggl(\prod_{i=1}^j \frac{1}{\delta \alpha_{i}}\biggr) \,
                \Lip(f^{(\ell)})$ for all $\ell\le r\ $ and $j=1,\dots,k$.
\end{itemize}

\end{thm}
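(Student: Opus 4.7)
The plan is to set $h_0=f$ and $h_j=h_{j-1}*g_j$ so that $f_k=h_k$, and to prove the seven claims inductively in the number of convolutions. Parts (vi) and (vii) follow immediately from (iv) and (v) by taking suprema over $\theta_1,\dots,\theta_{\ell+j}\in\sphere^{d-1}$ and applying the polarization identity recalled in Section~\ref{subsec:Function Classes}, so I would focus on (i)--(v).

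For (i), I would show by induction on $j$ that $h_j$ vanishes on $\Omega_{\delta(1-\alpha_1-\cdots-\alpha_j)}$: if $x$ lies in this set and $t\in B_j$, then $\|t\|_2\le\alpha_j\delta\sqrt{d}$ and the triangle inequality places $x-t$ in $\Omega_{\delta(1-\alpha_1-\cdots-\alpha_{j-1})}$, where $h_{j-1}$ vanishes by the inductive hypothesis; since $\sum\alpha_i\le 1$, the final vanishing set contains $\Omega$. For (ii), the standard estimate $|h_{j-1}*g_j(x)-h_{j-1}*g_j(y)|\le\Lip(h_{j-1})\|x-y\|_2$, valid because $g_j$ is a probability density, iterates $k$ times. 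For (iii), a Fubini swap gives
\[
\int_\Omega f_k(x)\,\dint x \,=\, \int_{\R^d}(g_1*\cdots*g_k)(t)\,\int_\Omega f(x-t)\,\dint x\,\dint t;
\]
the support of $g_1*\cdots*g_k$ lies in the ball of radius $\delta\sqrt{d}\sum\alpha_i\le\delta\sqrt{d}$, so the inner integral is at least $\e$, while $g_1*\cdots*g_k$ integrates to $1$. For (iv), differentiation commutes with convolution against an $L^1$ density when the derivative exists, so $D^{\theta_1}\cdots D^{\theta_\ell}f_k=(D^{\theta_1}\cdots D^{\theta_\ell}f)*g_1*\cdots*g_k$, and (ii) applied to the Lipschitz function on the right yields the bound.

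The analytic heart of the theorem, and the reason one convolves with balls rather than with point masses, is (v). I would first prove the \emph{single-step lemma}: if $\psi\colon\R^d\to\R$ is Lipschitz with constant $L$, $d\ge 2$, and $g$ is the uniform density on $B(0,r)$, then $\psi*g\in C^1$ and $\Lip(D^\theta(\psi*g))\le L/r$. Writing $(\psi*g)(x)$ as the average of $\psi$ over $B(x,r)$ and differentiating by a moving-ball / Stokes argument produces the boundary formula
\[
D^\theta(\psi*g)(x) \,=\, \frac{r^{d-1}}{\lambda_d(B(0,r))} \int_{\sphere^{d-1}} \psi(x+r\omega)\,\langle\omega,\theta\rangle\,\dint\sigma(\omega).
\]
Substituting the Lipschitz bound on $\psi$ reduces the claim to the dimension-free inequality $\sqrt{d}\,\Gamma(d/2)/(\sqrt{\pi}\,\Gamma((d+1)/2))\le 1$ for $d\ge 2$ (the limit being $\sqrt{2/\pi}$); I would verify this by direct computation at $d=2$ and by Gautschi's inequality for $d\ge 3$. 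Iterating the single-step lemma through the convolutions with $g_1,\dots,g_j$, starting from the Lipschitz function $\psi=D^{\theta_1}\cdots D^{\theta_\ell}f$ and commuting the remaining $\ell$ directional derivatives through at no cost by (iv), produces the factor $\prod_{i=1}^j 1/(\delta\alpha_i)$ after substituting $r_i=\alpha_i\delta\sqrt{d}$; the remaining convolutions with $g_{j+1},\dots,g_k$ preserve Lipschitz constants by (ii) and are absorbed using commutativity of convolution. The main obstacle will be rigorously justifying the boundary formula for a merely Lipschitz $\psi$: it holds classically for $\psi\in C^1$ and extends to the Lipschitz case by mollifying $\psi$, applying Stokes, and passing to the limit using the uniform bounds supplied by (ii). Once that is in hand, the rest is bookkeeping.
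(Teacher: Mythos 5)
Your proposal is correct and takes essentially the same route as the paper's source for this result: the paper itself gives no proof here, deferring $(i)$--$(v)$ to \cite{HNUW12} (where the key step $(v)$ is likewise obtained from the boundary/cross-section representation of the derivative of a ball average plus the Gamma-ratio estimate that turns the factor $\sim\sqrt{2d/\pi}/r$ into $1/(\delta\alpha_i)$ for $r=\alpha_i\delta\sqrt{d}$) and obtains $(vi)$, $(vii)$ from $(iv)$, $(v)$ by taking suprema over directions, just as you do. One small slip: your single-step lemma should read $\Lip\bigl(D^\theta(\psi\ast g)\bigr)\le L\sqrt{d}/r$ rather than $L/r$ --- this is what your inequality $\sqrt{d}\,\Gamma(d/2)/\bigl(\sqrt{\pi}\,\Gamma((d+1)/2)\bigr)\le 1$ actually delivers and what produces the claimed factor $\prod_{i=1}^j 1/(\delta\alpha_i)$ after substituting $r_i=\alpha_i\delta\sqrt{d}$.
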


See \cite{HNUW12} for the proof of $(i)$ through $(v)$.
Properties $(vi)$ and $(vii)$ are consequences of $(iv)$ and $(v)$, respectively.

\section{Lipschitz Functions} \label{sec:Lipschitz Functions}

In this section we consider Lipschitz functions.
The results of Sukharev~\cite{Su79} imply the curse of dimensionality
for multivariate integration for the class
$$
F_d = \{ f \colon [0,1]^d \to \R \ \mid \ \
\Lip(f)\le1 \} .
$$
We prove the curse of dimensionality for smaller classes
of Lipschitz functions. Roughly speaking,
the curse holds iff the Lipschitz constant
in dimension $d$ is of the order $d^{-1/2}$ or larger.

In the notation of Subsection \ref{subsec:Function Classes}, we consider the classes
$$ 
C_d^0(L) =\Bigl\{f\colon D_d \to \R\ \ \big|\ \ \ \|f\|_\infty\le1,\ 
\Lip(f) \le L_{0,d}=:L_d \Bigr\},
$$
where $L=(L_d)$ and the sequences $(D_d)_{d\in\N}$ 
satisfy the following property. 

We say that $(D_d)_{d\in\N}$ 
satisfies Property $(\Pbf)$ if it is a 
sequence of open sets  with $\lambda_d(D_d)=1$  such 
that there exist a sequence $(x^*_d)_{d\in\N}$, $x_d^*\in D_d$  and $R<\infty$
with
\begin{equation*} \tag{\Pbf}
\lim_{d\to\infty} \lambda_d\bigl(\{x\in D_d \mid \|x-x_d^*\|_2\ge R\sqrt{d}\}\bigr) \,=\,0. 
\end{equation*}
In particular, $(\Pbf)$ holds for all sequences with $\rad(D_d)$ of order $\sqrt{d}$
or the sequences $ \big(D_d^p \big)_{d\in\N}$ of $\ell_p^d$-balls with volume 1, $p>0$. 
For a derivation of the last statement see~\cite{SS91}.
There it is shown that
$$
 \lim_{d\to\infty} \lambda_d\bigl( D_d^p \cap t D_d^2 \bigr) \,=\,1
$$
provided that $t>t_p$ for a constant $t_p$ depending only on $p$ which is
equivalent to  Property $(\Pbf)$.

The main result of this section is the following theorem.

\begin{thm} \label{thm2} 
Assume that the sequence $(D_d)_{d\in\N}$ satisfies
$(\Pbf)$.
Then the curse of dimensionality holds for $C_d^0(L)$ if and only if
$$\limsup_{d\to\infty}L_d\, \sqrt{d}>0.$$
\end{thm}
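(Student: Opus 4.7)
The plan is to prove the two implications separately. Sufficiency (curse if $\limsup L_d\sqrt{d}>0$) will use a hat function vanishing on the samples together with the Euclidean ball volume bound recalled in the proof of Theorem~\ref{thm:volume-rad}. Necessity (no curse if $L_d\sqrt{d}\to0$) will use the single-point algorithm at the center $x_d^*$ provided by Property~$(\Pbf)$.

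For the sufficiency, assume $L_d\sqrt{d}\ge c$ for some $c>0$ and all $d$ in some infinite set, and fix a parameter $\delta_0\in(0,1/\sqrt{2\pi\eu})$. Given arbitrary sample points $x_1,\dots,x_n\in D_d$, I would consider the fooling function
$$
f(x)\,=\,\min\bigl(L_d\,\dist(x,\{x_1,\dots,x_n\}),\ M_d\bigr),
\qquad M_d\,=\,\min(1,\,L_d\delta_0\sqrt{d}).
$$
Since $\dist(\cdot,\{x_1,\dots,x_n\})$ is $1$-Lipschitz and truncation does not increase the Lipschitz constant, $\Lip(f)\le L_d$ and $\|f\|_\infty\le M_d\le 1$, so $f\in C_d^0(L)$, and clearly $f(x_j)=0$ for every $j$. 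Off the $\delta_0\sqrt{d}$-neighborhood of $\{x_1,\dots,x_n\}$ we have $f\equiv M_d\ge\min(1,c\delta_0)$, and that neighborhood has volume at most $n\,(\delta_0\sqrt{2\pi\eu})^d$ by the ball volume bound, so
$$
S_d(f)\,\ge\,\min(1,c\delta_0)\,\bigl(1-n\,(\delta_0\sqrt{2\pi\eu})^d\bigr).
$$
Setting $\eps_0=\tfrac12\min(1,c\delta_0)$ and invoking the inf-sup characterization of $n(\eps,C_d^0(L))$ recalled in Subsection~\ref{subsec:Complexity}, this forces $n(\eps_0,C_d^0(L))$ to be at least of order $(\delta_0\sqrt{2\pi\eu})^{-d}$ along the chosen subsequence of $d$, which is exponential since $\delta_0\sqrt{2\pi\eu}<1$.

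For the necessity, assume $L_d\sqrt{d}\to 0$, let $x_d^*$ and $R<\infty$ be as in $(\Pbf)$, and set $\mu_d=\lambda_d\bigl(\{x\in D_d\colon\|x-x_d^*\|_2\ge R\sqrt{d}\}\bigr)$, which tends to $0$ by $(\Pbf)$. For every $f\in C_d^0(L)$, splitting $D_d$ according to whether $\|x-x_d^*\|_2$ exceeds $R\sqrt{d}$ and using $\|f\|_\infty\le 1$ together with the Lipschitz bound gives
$$
|S_d(f)-f(x_d^*)|\,\le\,\int_{D_d}|f(x)-f(x_d^*)|\,\dint x\,\le\,L_dR\sqrt{d}+2\mu_d\,\longrightarrow\,0.
$$
Hence for every $\eps>0$ the one-point algorithm $A_{1,d}(f)=f(x_d^*)$ achieves worst-case error at most $\eps$ for all $d$ large enough, so $n(\eps,C_d^0(L))=1$ eventually, which rules out the exponential lower bound in the definition of the curse.

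The only technical point is the balancing of constants in the sufficiency: $\delta_0$ must lie strictly below $1/\sqrt{2\pi\eu}$ so that the volume of the $\delta_0\sqrt{d}$-neighborhood of $n$ sample points is exponentially small once $n$ is subexponential, while $M_d$ must simultaneously be kept uniformly positive of order $\min(1,c\delta_0)$ so that the fooling function has a definite positive integral independent of~$d$. Property~$(\Pbf)$ is used only in the necessity direction; the sufficiency relies solely on $\lambda_d(D_d)=1$.
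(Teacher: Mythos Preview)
Your proof is correct and follows essentially the same approach as the paper (Propositions~\ref{prop:lb_lip} and~\ref{prop:ub_lip}): the same truncated-distance hat function together with the Euclidean ball volume estimate for the lower bound, and the same one-point algorithm $A_{1,d}(f)=f(x_d^*)$ split over $\{\|x-x_d^*\|_2\lessgtr R\sqrt d\}$ for the upper bound. The only minor variation is that you cap the fooling function at level $M_d=\min(1,L_d\delta_0\sqrt d)$ with a fixed $\delta_0<1/\sqrt{2\pi\eu}$, whereas the paper caps at level~$1$ (so that $\delta=1/(L_d\sqrt d)$) and then invokes the scaling inequality $n(\eps,C_d^0(L))\ge n(a\eps,C_d^0(aL))$ to handle small values of $L_d\sqrt d$; the two devices are equivalent.
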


We prove lower and upper bounds separately in the next two subsections. 

\subsection{Lower Bounds}\label{subsec:Lower Bounds 1}

Here we prove the curse for the classes $\widebar{C}_d^0(L)$ of functions 
that are restrictions to $D_d$ of functions on $\R^d$ with 
$\|f\|\le1$ and $\Lip(f) \le L_d$. This implies also the curse for $C_d^0(L)$.  

\begin{prop} \label{prop:lb_lip}
Let $(D_d)_{d\in\N}$ be an arbitrary sequence of open sets with 
$\lambda(D_d)=1$. 
Then the information complexity for the class $\widebar{C}_d^0(L)$ satisfies
\[
n(\eps,\widebar{C}_d^0(L)) 
\ge (1-a\eps) \left(\frac{a L_d\sqrt{d}}{3\sqrt{2\eu\pi}}\right)^d
\qquad
\text{ for all } \eps\in(0,1/a), d\in\N \text{ and } a\ge1.
\]
This implies 
the curse of dimensionality 
for
the class $\widebar{C}_d^0(L)$ if
$$ \limsup_{d\to\infty} L_d \sqrt{d} > 0.$$
\end{prop}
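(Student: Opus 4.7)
The plan is to exhibit, for any adaptive choice of $n$ sample points $x_1,\dots,x_n\in D_d$, a single fooling function $f\in \widebar{C}_d^0(L)$ that vanishes at all $x_i$ and whose integral exceeds $\eps$. By the characterization of $n(\eps,F_d)$ for convex symmetric classes recalled in Subsection~\ref{subsec:Complexity}, this forces $n(\eps,\widebar{C}_d^0(L))>n$. Given a free parameter $a\ge 1$, I would take
$$
f(x) \,=\, \frac{1}{a}\min\bigl(1,\ aL_d\,\dist(x,\{x_1,\dots,x_n\})\bigr),\qquad x\in\R^d,
$$
which is the minimum of two globally Lipschitz functions on $\R^d$. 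A direct check gives $\|f\|_\infty\le 1/a\le 1$, $\Lip(f)\le L_d$, $f\in C^0(\R^d)$ and $f(x_i)=0$ for each $i$, so that $f|_{D_d}\in \widebar{C}_d^0(L)$.

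Next I would bound $S_d(f)$ from below. Setting $A=\{x_1,\dots,x_n\}$ and $\delta = \tfrac{1}{aL_d\sqrt{d}}$, one has $f(x)=1/a$ whenever $x\notin A_\delta$, with $A_\delta$ as defined in~\eqref{eq:neighbor}. A union bound combined with the volume estimate $\lambda_d(B_\delta^d)<(\delta\sqrt{2\pi\eu})^d$ (formula~$(6)$ of \cite{HNUW12}) yields $\lambda_d(A_\delta)\le n\,(\delta\sqrt{2\pi\eu})^d$, so
$$
S_d(f) \,\ge\, \frac{1}{a}\,\lambda_d(D_d\setminus A_\delta) \,\ge\, \frac{1}{a}\Bigl(1-n\,(\delta\sqrt{2\pi\eu})^d\Bigr).
$$
Requiring $S_d(f)>\eps$ and substituting the chosen $\delta$ produces the sufficient condition $n<(1-a\eps)\bigl(\tfrac{aL_d\sqrt{d}}{\sqrt{2\pi\eu}}\bigr)^d$, which is in fact stronger than the claim; the factor $3$ in the denominator of the proposition is slack that can simply be absorbed (and matches a slightly more conservative hat used to keep the construction uniform with the smoother analogues in later sections).

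The curse of dimensionality under $\limsup_{d\to\infty} L_d\sqrt{d}>0$ is then immediate: pick a subsequence on which $L_d\sqrt{d}\ge c>0$, choose $a$ with $ac/(3\sqrt{2\pi\eu})>1$ and any $\eps_0<1/a$; the lower bound gives $n(\eps,\widebar{C}_d^0(L))\ge \tfrac12(1+\gamma)^d$ along that subsequence, for some $\gamma>0$ and all $\eps\in(0,\eps_0)$. I expect no serious obstacle. The only care point is bookkeeping: verifying that the minimum of two globally $\R^d$-Lipschitz functions inherits the smaller Lipschitz constant on all of $\R^d$, so that $f$ truly lies in the smaller class $\widebar{C}_d^0(L)$ rather than only in $C_d^0(L)$.
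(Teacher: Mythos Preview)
Your proposal is correct and follows essentially the same route as the paper: the same distance-to-sample-points fooling function, the same ball-volume estimate, and the same scaling idea for the curse conclusion. The only cosmetic difference is that you fold the parameter $a$ directly into the fooling function $f=\tfrac{1}{a}\min(1,aL_d\dist(\cdot,A))$, whereas the paper first proves the case $a=1$ and then obtains general $a$ via the class-scaling inequality $n(\eps,\widebar{C}_d^0(L))\ge n(a\eps,\widebar{C}_d^0(aL))$; your observation that the factor $3$ is slack is also correct, the paper simply quotes a coarser form of the volume bound from \cite{HNUW12}.
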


\begin{proof}
Since $\widebar{C}_d^0(L)$ is convex and symmetric,
we know that adaption does not help, see \cite{Ba71}.
Hence, for given $d$ and $n$,
we may assume that the information on $f$ is given by
function values $f(x_1), f(x_2), \dots , f(x_n)$  for some $x_1,\dots,x_n \in D_d$.
Let $K=\{x_1,\dots,x_n\}$ and consider the function
$$
\widebar{f}(x) =  \min\bigl\{1, L_d \, \dist(x, K)\bigr\}  \quad\text{ for all } x\in \R^d.
$$
Since $\dist( \,\cdot\, ,K)$ has Lipschitz constant 1, we have 
$f := \widebar{f}\big|_{D_d}\in \widebar{C}_d^0(L)$.
Let
\[
\points_\delta \,=\, \bigcup_{i=1}^n B_\delta^d(x_i),
\]
where $B_\delta^d(x_i)$ is the ball with
center $x_i$ and radius $\delta\sqrt{d}=\frac{1}{L_d}$.
Note that $f(x)=1$ for all $x\notin\points_\delta$. 
We obtain
\[\begin{split}
\int_{D_d}  f(x) \,\dint x
\,&\ge\, \int_{D_d \setminus\points_{\delta}} f(x) \,\dint x
\,=\, 1- \lambda_d(\points_{\delta}\cap D_d ) \\
\,&\ge\, 1-\lambda_d(\points_{\delta})
\,\ge\, 1- n\lambda_d(B_{\delta}^d).
\end{split}\]
It is shown in \cite[eq.~(6)]{HNUW12} that 
$$ \lambda_d(B_{\delta}^d) < \frac{\Bigl(3\delta\sqrt{2\eu\pi}\Bigr)^d}{\sqrt{\pi d}}.$$
Therefore,
$$ 
\int_{D_d}  f(x) \,\dint x
\,>\, 1- \frac{n \Bigl(3\delta\sqrt{2\eu\pi}\Bigr)^d}{\sqrt{\pi d}}   
\,>\, 1-n \left(\frac{3\sqrt{2\eu\pi}}{L_d\sqrt{d}}\right)^d.
$$
The bound on $n(\eps,\widebar{C}_d^0(L))$ for $a=1$ follows. 

Now assume that $a>1$.
It follows from $\widebar{C}_d^0(a L)\subset a \widebar{C}_d^0(L)$ that 
\begin{equation}\label{eq:compl_scal}
n\bigl(\eps,\widebar{C}_d^0(L)\bigr) 
\,=\, n\bigl(a \eps,a \widebar{C}_d^0(L)\bigr) 
\,\ge\, n\bigl(a \eps, \widebar{C}_d^0(a L)\bigr).
\end{equation}
A simple substitution using the result above for $a=1$ in the above inequality leads to
\[
n(\eps,\widebar{C}_d^0(L))\ge
n\bigl(a \eps,\widebar{C}_d^0(a L)\bigr) \,\ge\, (1-a\eps) 
        \left(\frac{a L_d\sqrt{d}}{3\sqrt{2\eu\pi}}\right)^d.
\]
This implies the curse for $\alpha:=\limsup L_d\sqrt{d} >0$.
Indeed, for any $\eta\in(0,\alpha)$ it is enough to take 
$a>\frac{3\sqrt{2\eu\pi}}{\alpha-\eta}$. Then the lower bound on 
$n(\eps,\widebar{C}_d^0(L))$ is exponentially large in $d$ for $\eps<\eps_0=\frac{1}{a}$.
\end{proof}

Note that Proposition~\ref{prop:lb_lip} leads to a super-exponential  lower 
bound on the information complexity, if $L_d$ decays slower than $d^{-1/2}$.

\subsection{Upper Bounds}    \label{subsec:Upper Bounds 1}

In the last subsection we proved the curse of dimensionality for
function classes with (roughly speaking) Lipschitz constant 
bounded from below by a positive multiple of $1/\sqrt{d}$.
In this subsection we complement this result by proving upper bounds which
are simply based on one point formulas.
That is, assuming~$(\Pbf)$, a single evaluation of the function is enough to 
obtain an arbitrary small error as long as $d$ is large enough and 
$\lim_{d\to\infty}L_d \sqrt{d}=0$.
Recall that our function classes are defined by 
$$ C_d^0(L) 
=\bigl\{f\colon D_d \to \R\ \ \big|\ \ \ \|f\|_\infty\le1,\ \Lip(f) \le L_d \bigr\}.$$

\begin{prop} \label{prop:ub_lip}
Assume that the sequence $(D_d)_{d\in\N}$ satisfies $(\Pbf)$.
Let 
$$
\lim_{d\to\infty}L_d \sqrt{d}=0.
$$
Then the information complexity for the classes $C_d^0(L)$ satisfies 
\[
n(\eps,C_d^0(L))
\,=\, 1
\]
for all $\eps\in(0,1)$ and $d\ge d(\eps)$ large enough.
\end{prop}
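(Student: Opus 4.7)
The plan is to use the trivial one-point algorithm $A_{1,d}(f)=f(x_d^*)$, where $x_d^*$ is the point guaranteed by Property $(\mathbf{P})$. Since $\lambda_d(D_d)=1$, the error can be written and bounded as
\[
|S_d(f)-f(x_d^*)| = \left|\int_{D_d}\bigl(f(x)-f(x_d^*)\bigr)\,\dint x\right| \le \int_{D_d}|f(x)-f(x_d^*)|\,\dint x.
\]
I would then split $D_d$ into the ``near'' part $N_d=\{x\in D_d\mid \|x-x_d^*\|_2\le R\sqrt{d}\}$ and its complement $F_d=D_d\setminus N_d$.

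On $N_d$ I would use the Lipschitz bound: for $f\in C^0_d(L)$,
\[
|f(x)-f(x_d^*)| \le L_d\|x-x_d^*\|_2 \le L_d R\sqrt{d}\qquad\text{for all } x\in N_d,
\]
so the integral over $N_d$ is at most $L_d R\sqrt{d}\cdot\lambda_d(N_d)\le L_d R\sqrt{d}$, which tends to $0$ by the assumption $\lim_{d\to\infty}L_d\sqrt{d}=0$. On $F_d$ I would use the sup-norm bound $\|f\|_\infty\le 1$ to conclude $|f(x)-f(x_d^*)|\le 2$, giving an integral contribution at most $2\lambda_d(F_d)$, which tends to $0$ by Property $(\mathbf{P})$.

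Combining these two estimates,
\[
\sup_{f\in C^0_d(L)}|S_d(f)-f(x_d^*)| \le L_d R\sqrt{d} + 2\lambda_d(F_d) \xrightarrow{d\to\infty} 0,
\]
so for every $\eps\in(0,1)$ there is some $d(\eps)$ such that the right side is below $\eps$ for all $d\ge d(\eps)$, yielding $n(\eps,C_d^0(L))=1$. There is really no serious obstacle here: the argument is a clean two-piece estimate, with the two terms in the bound controlled respectively by the hypothesis on $L_d\sqrt{d}$ and by Property $(\mathbf{P})$. The only thing to be slightly careful about is that $(\mathbf{P})$ is phrased with a single fixed $R$, so I must use the same $R$ in defining $N_d$ and $F_d$ as appears in the statement of the property; any $R$ satisfying $(\mathbf{P})$ works.
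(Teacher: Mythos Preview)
Your argument is essentially identical to the paper's: the same one-point algorithm $A_{1,d}(f)=f(x_d^*)$, the same decomposition into the $R\sqrt{d}$-ball around $x_d^*$ and its complement, and the same two-term error bound $RL_d\sqrt{d}+2\lambda_d(F_d)\to 0$. The only omission is that you have only shown $n(\eps,C_d^0(L))\le 1$; the paper also spends a line ruling out $n=0$ by noting that the class contains all constants $f\equiv c$ with $|c|\le 1$, so any constant algorithm $A_{0,d}\equiv\alpha$ has error at least $\max(|1-\alpha|,|{-1}-\alpha|)\ge 1>\eps$.
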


\begin{proof}
We use the one point formula
\[
A_{1,d}(f) = f(x^*_d), 
\]
where we choose the $x^*_d$ as provided by Property $(\Pbf)$.
With $R>0$ from Property $(\Pbf)$, we obtain for $f\in C_d^0(L)$ that
\[\begin{split}
|S_{d}(f)-A_{1,d}(f)|&=\left|\int_{D_d}(f(x)-f(x^*_d))\,{\rm d}x\right| \\
&\le \left|\int_{D_d\cap B_R^d(x_d^*)}(f(x)-f(x^*_d))\,{\rm d}x\right| 
                + \left|\int_{D_d\setminus B_R^d(x_d^*)}(f(x)-f(x^*_d))\,{\rm d}x\right| \\ 
&\le L_d\,\int_{D_d\cap B_R^d(x_d^*)}\|x-x^*\|_2\,{\rm d}x 
                + 2\,\lambda_d\bigl(D_d\setminus B_R^d(x_d^*)\bigr) \\
&\le R L_d\sqrt{d}
                + 2\,\lambda_d\bigl(D_d\setminus B_R^d(x_d^*)\bigr).
\end{split}\]
Hence,
\[
e(A_{1,d})\le  R L_d\sqrt{d} + 2\,\lambda_d\bigl(D_d\setminus B_R^d(x_d^*)\bigr),
\]
which tends to zero with $d$ approaching infinity.
This proves that $n(\eps,C^0_d(L))\le1$. To finish the proof we need
to show that $n(\eps,C^0_d(L))>0$. Even for $L_d=0$, the class
$C^0_d(L)$ contains all constant functions $f(x)\equiv c$ for $c\in[-1,1]$.
If $n(\eps,C^0_d(L))=0$ then we can use only constant algorithms
$A_{0,d}\equiv \alpha$, where $\alpha$ is independent of $f$. Taking
$f\equiv 1$
and $f\equiv-1$ we have
$$
e(A_{0,d})\ge \max(1-\alpha|,|-1-\alpha|)\ge 1>\eps.
$$
Hence, $n(\eps,C^0_d(L))$ cannot be zero, and this completes the proof.  
\end{proof}

\section{Functions with Lipschitz Gradients} \label{sec:Functions with Lipschitz Gradients}

In this section we want to strengthen the results from the previous
section by proving the curse of dimension for a smaller class of
functions. In fact, we impose bounds on the Lipschitz constants of
the derivatives of the functions that are of order $1/d$ (instead of
$1/\sqrt{d}$). 

Now we can no longer assume arbitrary domains for the lower bounds. 
The essential geometric property of the sets $D_d$ 
that we need 
is that a sufficiently large neighborhood of the convex hull of $n$ points in $D_d$ 
has very small volume as long as $n$ is not exponentially large in $d$. 
We proved such estimates in Section \ref{subsec:Convex Hull}
for sets with small radius and for the unit cube.
Recall that 
\[ 
C_d^1(L) = \bigl\{f\in C^1(D_d)\ \ \big|\ \ \ \|f\|_\infty\le1,\ \Lip(f) \le L_{0,d}, 
        \ \Lip(\nabla f) \le L_{1,d} \bigr\}.
\]

The main result of this section is the following theorem.

\begin{thm} \label{thm3}
Let $(D_d)_{d\in\N}$ be cubes $(0,1)^d$ or 
convex sets of small radius in the sense of \eqref{smallradius}. 
Then the curse of dimensionality holds for $C_d^1(L)$ if and only if 
$$
\limsup_{d\to\infty}L_{0,d} \, \sqrt{d} > 0 \qquad 
\mbox{and} \qquad \limsup_{d\to\infty}L_{1,d} \, d > 0.
$$
\end{thm}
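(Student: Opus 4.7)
The statement is an equivalence, so I would handle the two directions separately.

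For the upper-bound (no-curse) direction, suppose at least one of the two limits is zero. If $\lim_{d\to\infty} L_{0,d}\sqrt d = 0$, then since $C_d^1(L) \subseteq C_d^0(L)$, Proposition~\ref{prop:ub_lip} yields $n(\eps, C_d^1(L)) \le 1$ for all sufficiently large $d$. If instead $\lim_{d\to\infty} L_{1,d}\, d = 0$, I would use the one-point rule $A_{1,d}(f) := f(\bar x_d)$ at the centroid $\bar x_d = \int_{D_d} x\, dx$. Since $\int_{D_d}(x - \bar x_d)\,dx = 0$, the linear term in a Taylor expansion of $f$ around $\bar x_d$ integrates to zero and
\[
|S_d(f) - f(\bar x_d)| \;\le\; \tfrac{1}{2}\, L_{1,d}\int_{D_d}\|x - \bar x_d\|_2^2\, dx.
\]
The second moment on the right equals $d/12$ for the cube and is at most $\rad(D_d)^2 = O(d)$ for convex $D_d$ of small radius, so the error vanishes as $d\to\infty$ and one function value suffices.

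For the lower-bound (curse) direction, I extract an infinite subsequence along which $L_{0,d}\sqrt d \ge \alpha$ and $L_{1,d}\, d \ge \beta$ for constants $\alpha,\beta>0$. Given any $n$ adaptive sample points $x_1,\dots,x_n \in D_d$, let $K := \conv\{x_1,\dots,x_n\}$. The key convex-analytic input is that for closed convex $K$, the squared distance $u(x) := \dist^2(x,K)$ lies in $C^{1,1}(\R^d)$ with $\nabla u(x) = 2(x - P_K x)$ and $\Lip(\nabla u) \le 2$, the latter following from the firm nonexpansiveness of the metric projection $P_K$. Fix a $C^\infty$ smooth step $\phi:[0,\infty)\to[0,1]$ with $\phi\equiv 0$ near $0$ and $\phi\equiv 1$ on $[1,\infty)$, and define the candidate fooling function
\[
\tilde f(x) := \phi\!\bigl(u(x)/s^2\bigr),\qquad x\in\R^d,
\]
for a scale $s>0$. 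Then $\tilde f \in C^1(\R^d)$, $\|\tilde f\|_\infty\le 1$, $\tilde f(x_i)=0$ for every sample point, and the chain rule together with $\|\nabla u\|=2\sqrt u$ and $\|\nabla^2 u\|\le 2$ yields
\[
\Lip(\tilde f)\le C_1/s,\qquad \Lip(\nabla \tilde f)\le C_2/s^2
\]
for explicit $C_1,C_2$ depending only on $\phi$. Via the standard rescaling $n(\eps, C_d^1(L)) \ge n(a\eps, C_d^1(aL))$ I replace $L$ by $aL$ for a fixed large constant $a$ and saturate both Lipschitz constraints; this forces $\delta := s/\sqrt d$ to satisfy $\delta \le \max\bigl(C_1/(a\alpha),\sqrt{C_2/(a\beta)}\bigr)$, strictly less than $1/12$ for $a$ large enough (and below the threshold of Theorem~\ref{thm:volume-rad} for convex $D_d$ of small radius). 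Since $\tilde f \equiv 1$ off the neighborhood $K_\delta$, Theorem~\ref{thm:volume} (or Theorem~\ref{thm:volume-rad}) gives
\[
\int_{D_d}\tilde f \;\ge\; 1 - \lambda_d(K_\delta\cap D_d) \;\ge\; 1 - n(d+1)\,\tilde\gamma(\delta)^d,
\]
which exceeds $1/2$ whenever $n$ is subexponential in $d$, yielding the exponential lower bound $n(\eps, C_d^1(L)) \ge c(1+\gamma)^d$ for $\eps \le 1/(4a)$.

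\textbf{Main obstacle.} The delicate point is matching the exponent of $d$ in the condition on $L_{1,d}$. A naive attempt---smoothing a piecewise-linear ``hat'' function as in Proposition~\ref{prop:lb_lip} by one convolution from Theorem~\ref{thm:conv}---forces $\Lip(\nabla\tilde f) \le \Lip(\tilde f)/\delta$, which only yields the weaker condition that $L_{1,d}\sqrt d$ be bounded below, losing a factor of $\sqrt d$. Reaching the matching condition that $L_{1,d}\, d$ be bounded below relies on the $C^{1,1}$ regularity of the squared distance to a convex set, which provides a directly bounded Hessian without the loss incurred by convolution. A secondary technicality is the implicit requirement that both $\limsup$ conditions be realized on a common infinite subsequence of dimensions.
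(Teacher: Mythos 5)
Your proposal is correct and takes essentially the same route as the paper: the lower bound is the paper's own device of composing a smooth cutoff with the squared distance to the convex hull (the paper uses an explicit piecewise $p$ and the set $K_\delta$ in place of $K$, a variant it notes is interchangeable), relying on the $C^{1,1}$ regularity of $\dist(\cdot,K)^2$ via the nonexpansive projection, the volume estimates of Theorems~\ref{thm:volume-rad} and~\ref{thm:volume}, and the rescaling trick, while your upper bounds (inclusion into $C_d^0(L)$ plus Proposition~\ref{prop:ub_lip}, and the one-point centroid rule with a Taylor remainder) match Propositions~\ref{prop:ub_lip} and~\ref{prop:ub_lipgrad_convex}. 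The common-subsequence caveat you flag is likewise left implicit in the paper's argument, so it does not distinguish the two proofs.
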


Again, we prove lower and upper bounds separately in the next two subsections. 
Then Theorem \ref{thm3} is a direct consequence of Propositions \ref{prop:lb_lip}, 
\ref{prop:lb_lipgrad}, \ref{prop:lb_lipgrad2}
and \ref{prop:ub_lipgrad_convex}.

\subsection{Lower Bounds}\label{subsec:Lower Bounds 2}

Assume that $(D_d)$ is a sequence of convex sets  of small radius or the unit cube.
To prove the curse of dimensionality for the class $C_d^1(L)$, 
we proceed as in Section~\ref{subsec:Lower Bounds 1} and 
show the curse for the smaller class $\widebar{C}_d^1(L)$. 
For this we construct, for given sample points $x_1,\dots,x_n\in D_d$, 
a fooling function that is defined on the entire 
$\R^d$ and fulfills the required bounds on the Lipschitz constants. 
This function will be zero at the points $x_1,\dots,x_n$ 
and will have a large integral in $D_d$ as long as $n$ is not 
exponentially large in $d$. 
Moreover, this function will be
zero in a neighborhood of the entire convex hull of 
$x_1,\dots,x_n$.
Unfortunately, we were not able to construct a fooling function that is 
zero only at $x_1,\dots,x_n$ (as for Lipschitz functions). 
Such a function would probably be an important step towards an 
improvement of the results of this paper
for the case of higher smoothness.

Let $\phi\colon\R^d \to \R$ be the squared distance function,
which is defined as
\[
\phi(x) = \dist(x,K_\delta)^2,
\]
where $K$ is the convex hull of the $n$ points
$x_1,\dots,x_n\in D_d$ and $K_\delta$ is defined as
in \eqref{eq:neighbor}.
The function $\phi$ 
obviously vanishes on $K_\delta$.
Let $P_{K_\delta}:\R^d \to K_\delta$ be the nearest neighbor projection,
i.e., $P_{K_\delta}(x)$ is the unique point in ${K_\delta}$ given by
\[
\Vert x-P_{K_\delta}(x)\Vert_2 = \dist\big(x,{K_\delta}\big).
\]
It follows from Theorem 3.3 in \cite{DZ94}
that $\phi$ is differentiable with gradient
\[
 \nabla \phi (x) \,=\, 2 \big(x-P_{K_\delta}(x)\big).
\]
Since $P_{K_\delta}$ is a contraction, this also implies
\[
 \Vert \nabla\phi(x) -  \nabla\phi(y) \Vert_2 \le 4 \Vert x-y\Vert_2
\qquad\mbox{for all}\quad x,y\in\R^d.
\]
That is, $\nabla\phi$ is Lipschitz with constant
${\rm Lip}(\nabla\phi)\le 4$.

The fooling function will now be the restriction of a function of the form
\[
 f = p \circ \phi
\]
with $\phi$
as above and with some bounded and smooth function
$p\colon\R_+ \to \R_+$.
Before we state our choice of $p$ explicitly, we now show how (and which)
properties of $p$ imply the needed properties of $f$.
First, if we assume that $p(0)=0$,
we obtain $f(x)=0$ for all $x\in K_\delta$.
Moreover, we have
\[
 \Vert f \Vert_\infty \le \Vert p \Vert_\infty:=\max_{t\in\R}|p(t)|,
\]
so bounds on function values of $p$ directly translate
into bounds on function values of $f$.

Assuming differentiability of $p$, we obtain  the formula
\begin{equation} \label{eq:nabla}
 \nabla f (x) \,=\, p'\big(\phi(x)\big) \nabla \phi(x) \,=\,
2 p'\big(\dist(x,K_\delta)^2\big) \big(x-P_{K_\delta}(x) \big).
\end{equation}
This implies
\[
 \Vert \nabla f (x) \Vert_2
 \,=\, 2 p' \big(\dist(x,K_\delta)^2\big)\,\dist(x,K_\delta).
\]
So any uniform upper bound on the function $2 \sqrt{t}\, p'(t)$ is also an upper
bound for $ \Vert \nabla f (x) \Vert_2$ for all $x\in\R^d$.
Note that
\[
|D^{\theta} f(x)| \,=\, |\il\theta, \nabla f(x)\ir|
\,\le\, \Vert \nabla f (x) \Vert_2 \qquad \text{ for all } \theta\in
\sphere^{d-1}.
\]
This gives an upper bound on 
all directional derivatives of $f$ of order one,  and thus
a bound on the Lipschitz constant of $f$.
Additionally, 
for all $\theta\in \sphere^{d-1}$ we have
\[\begin{split}
\Lip(D^{\theta} f) \,&=\,
\sup_{x,y\in\R^d} \frac{|D^{\theta} f(x)- D^{\theta} f(y)|}{\Vert x-y\Vert_2}
\,=\,
\sup_{x,y\in\R^d} \frac{|\il \theta, \nabla f(x)- \nabla f(y)\ir|}{\Vert x-y\Vert_2}\\
\,&\le\,
\sup_{x,y\in\R^d} \frac{\Vert\nabla f(x)- \nabla f(y)\Vert_2}{\Vert x-y\Vert_2}
\,=\, \Lip(\nabla f),
\end{split}\]
which implies a bound on the Lipschitz constant of the (first-order) derivatives of $f$
given a bound on $\Lip(\nabla f)$.
But, if we assume $p(t)=1$ for $t>\delta^2 d$, then $\nabla f(x)=0$
for all $x\in\R^d$ with $\dist(x,{K_\delta})>\delta\sqrt{d}$,
i.e.,~for all $x\notin K_{2\delta}$. This gives
\[
\Lip(\nabla f) \,=\, \Lip(\nabla f |_{K_{2\delta}}).
\]
Using \eqref{eq:nabla} we obtain
\begin{eqnarray*}
\tfrac12\bigl(\nabla f(x)-\nabla f(y)\bigr)&=&
p^\prime\bigl(\dist(x,{K_\delta})^2\bigr)\bigl(x-P_{K_\delta}(x)\bigr)-
p^\prime\bigl(\dist(y,{K_\delta})^2\bigr)\bigl(y-P_{K_\delta}(y)\bigr)\\
&=&
p^\prime\bigl(\dist(y,{K_\delta})^2\bigr)\Bigl(x-P_{K_\delta}(x) -y+P_{K_\delta}(y)\Bigr)\\
&&\ + \,\Bigl(p^\prime\bigl(\dist(x,{K_\delta})^2\bigr)-p^\prime(\dist(y,{K_\delta})^2)\Bigr)
                                \bigl(x-P_{K_\delta}(x)\bigr).
\end{eqnarray*}
The norm of the first term can be bounded
by $2\|p^\prime\|_\infty\,\|x-y\|_2$,
whereas, for $x,y\in K_{2\delta}$, the second term is bounded by
\[\begin{split}
\Lip(p^\prime)\,\Bigl|\dist(x,K_\delta)^2&-\dist(y,K_\delta)^2\Bigr|\,
        \dist(x,K_\delta) \\
&\le\,\Lip(p^\prime)\,\|x-y\|_2\,\left(\dist(x,K_\delta)+\dist(y,K_\delta)\right)
\,\dist(x,K_\delta)\\
&\le\,
2 \delta^2 d\,\Lip(p^\prime)\,\|x-y\|_2.
\end{split}\]
Here, we used $\dist(x,K_\delta)\le\delta\sqrt{d}$ for
$x\in K_{2\delta}$.
This yields that
\[
\Lip(D^{\theta} f) \,\le\, \Lip(\nabla f)
\,\le\, 4\,\big(\Vert p'\Vert_\infty + \delta^2 d \, \Lip(p') \big).
\]

In summary, we want to construct a differentiable function
$p\colon \R_+\to\R_+$ such that
\begin{itemize}
\item[$(a)$] \quad $p(0)\,=\,0$,
\item[$(b)$] \quad $\norm{p}_\infty=1$,
\item[$(c)$] \quad $p(t)=1\ $ for $t>\delta^2 d$,
\item[$(d)$] \quad $2\sqrt{t}p'(t) \le \frac{2}{\delta\sqrt{d}}\ $ for all $t\in\R$ and
\item[$(e)$] \quad $4\,\big(\Vert p'\Vert_\infty + \delta^2 d \, \Lip(p') \big)
                                                        \,\le\, \frac{40}{\delta^2 d}$.
\end{itemize}
These properties, once verified, imply that
$f|_{D_d}\in \widebar{C}_d^1(L)$ with $f=p\circ \phi$, where
\[
L_{0,d} = \frac{2}{\delta \sqrt{d}}  \quad \text{and} \quad
L_{1,d}=\frac{40}{\delta^2 d}.
\]

We now give an explicit construction of such a function $p$.
We use the function
\[
p(t) \,=\, \begin{cases}
\frac{2t}{\delta^2 d}, & \text{ if }\quad  t\le \frac{\delta^2 d}{4}, \\
-\frac{2t}{\delta^2 d}+\frac{4\sqrt{t}}{\delta \sqrt{d}}-1, & \text{ if }\quad  
        t\in \bigl(\frac{\delta^2 d}{4},\delta^2 d \bigr), \\
1, & \text{ if }\quad  t\ge \delta^2 d.
\end{cases}
\]
We obtain immediately the properties $(a)$--$(c)$ for $p$. 
Furthermore, $p$ is continuously differentiable with derivative 
\[
p'(t) \,=\, \begin{cases}
\frac{2}{\delta^2 d}, & \text{ if }\quad  t\le \frac{\delta^2 d}{4}, \\
-\frac{2}{\delta^2 d}+\frac{2}{\delta \sqrt{d}}\frac{1}{\sqrt{t}}, & 
        \text{ if }\quad  t\in \bigl(\frac{\delta^2 d}{4},\delta^2 d \bigr), \\
0, & \text{ if }\quad  t\ge \delta^2 d.
\end{cases}
\]
Using this, we obtain property $(d)$. 
For $(e)$ observe that $p'$ is absolutely continuous and thus, almost everywhere 
differentiable with derivative
\[
p''(t) \,=\, \begin{cases}
-\frac{1}{\delta \sqrt{d}}\frac{1}{t^{3/2}}, & 
        \text{ if }\quad  t\in \bigl(\frac{\delta^2 d}{4},\delta^2 d \bigr), \\
0, & \text{ otherwise, }
\end{cases}
\]
for all $t\notin\bigl\{\frac{\delta^2 d}{4},\delta^2 d\bigr\}$.
It is well known that in such cases the Lipschitz constant of $p'$ is equal to 
the supremum of $\abs{p''(t)}$ over 
$t\notin\bigl\{\frac{\delta^2 d}{4},\delta^2 d\bigr\}$.
This proves that $\Lip(p')\le8/(\delta^4 d^2)$ and thus,
\[\begin{split}
4\,\big(\Vert p'\Vert_\infty + \delta^2 d \, \Lip(p') \big)
\,&\le\, 4\,\biggl(\frac{2}{\delta^2 d} + \delta^2 d \, 
        \Bigl( \frac{1}{\delta\sqrt{d}}\, \frac{1}{(\delta\sqrt{d}/2)^3} \Bigr) \biggr)\\
\,&\le\, 4\,\biggl(\frac{2}{\delta^2 d} + \frac{8 \delta^2 d}{\delta^4 d^2} \biggr) 
\,=\, \frac{40}{\delta^2 d}.
\end{split}\]
This proves the last property $(e)$.

The following result shows the curse for a specific choice of 
the bounds $L^*=(L^*_{0,d}, L^*_{1,d})_{d\in\N}$ and thus, 
for every $L$ that is a constant multiple of $L^*$.
If we consider the domains to be unit cubes, we obtain

\begin{prop}\label{prop:lb_lipgrad}
Let $D_d=(0,1)^d$. For
\[
L^*_{0,d} \,=\, \frac{400}{\sqrt{d}}  \quad \text{ and } \quad
L^*_{1,d} \,=\, \frac{16\cdot 10^5}{d}
\]
we have
\[
  n\bigl(\eps,\widebar{C}_d^1(L^*)\bigr) \,\ge\, 
  \frac{1-\eps}{d+1}  \left(\frac{8}{7}\right)^{d} \quad
\text{ for all }\; d\in\nat \;\text{ and }\;  \eps\in(0,1).
\]
Hence, the curse of dimensionality 
holds also for the class $\widebar{C}_d^1(L)$ 
if
\[
\limsup_{d\to\infty}L_{0,d} \, \sqrt{d} > 0 
\qquad \mbox{and} \qquad \limsup_{d\to\infty}L_{1,d} \, d > 0.
\] 
\end{prop}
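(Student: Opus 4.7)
The plan is to specialise the fooling-function construction assembled in the paragraphs preceding the proposition. Given any sample points $x_1,\dots,x_n\in(0,1)^d$, let $K$ be their convex hull and set $f:=p\circ\phi$ with $\phi(x)=\dist(x,K_\delta)^2$ and $p$ the explicit piecewise function defined above. Properties $(a)$--$(e)$ verified there show that $f|_{(0,1)^d}\in\widebar{C}_d^1(L)$ as soon as $L_{0,d}\ge 2/(\delta\sqrt d)$ and $L_{1,d}\ge 40/(\delta^2 d)$.

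First, I would fix $\delta=1/200$, which simultaneously realises both target bounds:
\[
\frac{2}{\delta\sqrt d}=\frac{400}{\sqrt d}=L^*_{0,d},\qquad
\frac{40}{\delta^2 d}=\frac{40\cdot 200^2}{d}=\frac{1.6\cdot10^6}{d}=L^*_{1,d}.
\]
Crucially, $2\delta=1/100<1/12$ lies inside the range covered by Theorem~\ref{thm:volume}, so the explicit numerical bound $\wt\ga(1/100)<7/8$ quoted there is available.

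Next, observe that $\widebar{C}_d^1(L^*)$ is symmetric, so $-f$ also lies in it, and both $f$ and $-f$ vanish on $K_\delta\supset\{x_1,\dots,x_n\}$. Hence no algorithm using only these $n$ function values can distinguish $f$ from $-f$, and its worst-case error on the class is at least $|S_d(f)|$. Since $p(t)=1$ for $t\ge\delta^2 d$, the function $f$ equals $1$ off $K_{2\delta}$, whence
\[
S_d(f)\,\ge\,1-\lambda_d\bigl(K_{2\delta}\cap(0,1)^d\bigr)
\,\ge\,1-n(d+1)\,\wt\ga(1/100)^d
\,\ge\,1-n(d+1)\,(7/8)^d,
\]
where the middle step is Theorem~\ref{thm:volume} applied with parameter $2\delta=1/100$. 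Rearranging $1-n(d+1)(7/8)^d>\eps$ gives the stated bound $n(\eps,\widebar{C}_d^1(L^*))\ge (1-\eps)(d+1)^{-1}(8/7)^d$.

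The consequence for general $L$ follows by the same scaling trick as in Proposition~\ref{prop:lb_lip}: from $\widebar{C}_d^1(aL)\subset a\,\widebar{C}_d^1(L)$ for $a\ge1$ one has $n(\eps,\widebar{C}_d^1(L))\ge n(a\eps,\widebar{C}_d^1(aL))$, and choosing $a$ large enough that $aL_{0,d}\ge L^*_{0,d}$ and $aL_{1,d}\ge L^*_{1,d}$ hold together for infinitely many $d$ (which is exactly what the joint assumption on the two $\limsup$'s supplies) upgrades the specific bound to exponential lower bounds along that subsequence. The only genuine obstacle is the numerology: ensuring a single $\delta$ hits both target Lipschitz constants while $2\delta$ remains within reach of Theorem~\ref{thm:volume} — a constraint that happens to be comfortably satisfied by $\delta=1/200$. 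Everything else is assembly of ingredients already in place: the function $p$, the cube-neighborhood volume estimate, and the standard symmetric-class/non-adaption argument.
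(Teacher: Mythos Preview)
Your proof is correct and follows essentially the same approach as the paper: the choice $\delta=1/200$, the fooling function $f=p\circ\phi$, the application of Theorem~\ref{thm:volume} with $2\delta=1/100$ yielding the $(7/8)^d$ factor, and the concluding scaling argument via $\widebar{C}_d^1(aL)\subset a\,\widebar{C}_d^1(L)$ all match the paper's proof. If anything, you spell out a few steps (the numerology check and the symmetric-class argument) slightly more explicitly than the paper does.
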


\begin{proof} 
Let $\delta=1/200$.
As discussed above, the restriction $\wt f=f|_{D_d}$ satisfies 
$\wt f\in \widebar{C}_d^1(L^*)$.
It remains to bound its integral in $D_d$. 
By property $(c)$ of $p$ we have 
$\wt f(x)=1$ for all $x$ with $\dist(x,K_\delta)^2>\delta^2 d$, 
i.e.~for all $x\notin K_{2\delta}$. 
Then Theorem~\ref{thm:volume} implies 
\[
\int_{(0,1)^d} \wt f(x) \,\dint x 
\,\ge\, \int_{(0,1)^d\setminus K_{2\delta}} \wt f(x) \,\dint x 
\,=\, 1- \lambda_d(K_{2\delta}\cap [0,1]^d) > 1-n(d+1)\left(\frac{7}{8}\right)^d.
\] 
This proves the curse for $\widebar{C}_d^1(L^*)$ 
if $\limsup L_{0,d} \, \sqrt{d} > 400$ and 
$\limsup L_{1,d} \, d > 2^{21}$. 
Similar arguments as in the proof of Proposition~\ref{prop:lb_lip} 
(using $\widebar{C}_d^1(a L)\subset a \widebar{C}_d^1(L)$ for $a\ge1$) 
conclude the proof in the general case.
\end{proof}

We now consider the class of domains with small radius.
The proof follows exactly the same lines but with the use 
of Theorem~\ref{thm:volume} replaced by Theorem~\ref{thm:volume-rad}. 
Furthermore, in contrast to Theorem~\ref{thm3}, we do not have 
to assume now convexity of the domains $D_d$.

\begin{prop}\label{prop:lb_lipgrad2}
Let $(D_d)_{d\in\N}$ be a sequence of sets with small radius
in the sense of \eqref{smallradius}.
Then the curse of dimensionality holds for the class $\widebar{C}_d^1(L)$, 
if
\[
\limsup_{d\to\infty}L_{0,d} \, \sqrt{d} > 0 
\qquad \mbox{and} \qquad \limsup_{d\to\infty}L_{1,d} \, d > 0.
\] 
\end{prop}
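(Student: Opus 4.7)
The plan is to mirror the argument used for Proposition~\ref{prop:lb_lipgrad} almost verbatim, substituting Theorem~\ref{thm:volume-rad} for Theorem~\ref{thm:volume}. Given arbitrary sample points $x_1,\dots,x_n\in D_d$, I would construct the fooling function $f=p\circ\phi$ exactly as in the paragraphs preceding Proposition~\ref{prop:lb_lipgrad}, where $\phi(x)=\dist(x,K_\delta)^2$, $K$ is the convex hull of the $x_i$, and $p$ is the explicit piecewise function defined there. That construction never uses the cube structure, so the verification of properties $(a)$--$(e)$ and the conclusion $f|_{D_d}\in\widebar{C}_d^1(L^*)$ with $L^*_{0,d}=2/(\delta\sqrt{d})$ and $L^*_{1,d}=40/(\delta^2 d)$ remain valid for any bounded $D_d$. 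In particular $f$ vanishes on $K_\delta\supset\{x_1,\dots,x_n\}$, so $f$ fools the algorithm.

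The only real change is in bounding $\int_{D_d} f(x)\,\dint x$. Since $f$ equals $1$ off $K_{2\delta}$, I bound the integral below by $1-\lambda_d(K_{2\delta})$ and then invoke Theorem~\ref{thm:volume-rad} with neighborhood parameter $2\delta$ to obtain
\[
\lambda_d(K_{2\delta}) \,<\, n\,\Bigl((R_d + 4\delta)\sqrt{\pi\eu/2}\Bigr)^d,
\]
where $R_d=\rad(D_d)/\sqrt{d}$. By the small-radius assumption \eqref{smallradius}, $\limsup_{d\to\infty}R_d<\sqrt{2/(\pi\eu)}$, so I can fix $\delta>0$ once and for all so that $(R_d+4\delta)\sqrt{\pi\eu/2}\le q<1$ for some $q<1$ and all sufficiently large $d$. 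This forces $n$ to be at least a positive multiple of $q^{-d}$ whenever the algorithm achieves error at most $\eps<1$, yielding the curse for the class $\widebar{C}_d^1(L^*)$ with these specific constants.

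To pass from $L^*$ to an arbitrary $L$ satisfying the two $\limsup$ hypotheses, I would copy the scaling step from Proposition~\ref{prop:lb_lip}: the inclusion $\widebar{C}_d^1(aL)\subset a\,\widebar{C}_d^1(L)$ for $a\ge 1$ gives $n(\eps,\widebar{C}_d^1(L))\ge n(a\eps,\widebar{C}_d^1(aL))$, and one chooses $a$ large enough so that $aL_{j,d}\ge L^*_{j,d}$ for $j=0,1$ along a subsequence of $d$'s, which is possible precisely because $\limsup L_{0,d}\sqrt{d}>0$ and $\limsup L_{1,d}\,d>0$. The only delicate point is coordinating a single choice of $\delta$ that simultaneously keeps $R_d+4\delta$ strictly below $\sqrt{2/(\pi\eu)}$ asymptotically and fixes the values of $L^*$, but this is immediate from \eqref{smallradius}. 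Note that convexity of $D_d$, required for the statement of Theorem~\ref{thm3}, plays no role here since Theorem~\ref{thm:volume-rad} only needs the radius assumption; this is why the result covers all sequences of small radius without any convexity hypothesis.
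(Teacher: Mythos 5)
Your argument is essentially the paper's own proof: the authors likewise reuse the fooling function $p\circ\phi$ and the bounds $L^*_{0,d}=2/(\delta\sqrt{d})$, $L^*_{1,d}=40/(\delta^2 d)$ from Proposition~\ref{prop:lb_lipgrad} unchanged, replace Theorem~\ref{thm:volume} by Theorem~\ref{thm:volume-rad} applied to $K_{2\delta}$ with $\delta$ fixed small via \eqref{smallradius}, and finish by the same scaling trick, noting as you do that convexity is not needed. The one delicate point you assert in passing---that the two limsup conditions supply a \emph{common} subsequence of dimensions along which both $aL_{0,d}\ge L^*_{0,d}$ and $aL_{1,d}\ge L^*_{1,d}$ hold---is glossed over in exactly the same way in the paper's sketch, so it is not a gap relative to the published argument.
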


\begin{proof} 
We present only a sketch of the proof, since it is almost identical 
to the proof of Proposition~\ref{prop:lb_lipgrad}.
First, note that for every set of small radius there exists a $\delta>0$ 
such that $\lambda_d(K_{2\delta})$ is exponentially small in $d$, 
where $K$ is the convex hull of $n$ points in $D_d$, 
see Theorem~\ref{thm:volume-rad}.
Choosing this $\delta$ in the above construction of the fooling 
function $f$ (and its restriction $\wt f$) shows 
the curse for the classes $C_d^1(L)$ with 
$\limsup L_{0,d} \, \sqrt{d} > C$ and 
$\limsup L_{1,d} \, d > C$ for some $C<\infty$. 
Again, we obtain the result by scaling.
\end{proof}

\begin{rem}
Note that the calculations of this section could be done also with 
the function $\dist(x,K)^2$ which vanishes only on $K$ instead of 
$K_\delta$. 
This would have somewhat reduced the constants in Proposition \ref{prop:lb_lipgrad}.
For convenience we worked with $\phi$ as above, since we need this function also in 
Section~\ref{sec:Functions with Higher Smoothness}.
\end{rem}

\begin{rem}
With similar ideas as used in \cite{HNUW12}, 
it is not possible to produce super-exponential lower bounds on the 
information complexity in the cases of Proposition~\ref{prop:lb_lipgrad} 
and \ref{prop:lb_lipgrad2}. 
The reason is that even with very small $\delta$ (depending on $d$) the 
volume of the $\delta\sqrt{d}$-neighborhood of the convex hull cannot be 
super-exponentially small. 
However, we conjecture that the information complexity is super-exponential
for slightly larger classes, e.g., if the conditions
\[
\limsup_{d\to\infty}L_{0,d} \, \sqrt{d} > 0 
\qquad \mbox{and} \qquad \limsup_{d\to\infty}L_{1,d} \, d > 0
\] 
are replaced by 
\[
\limsup_{d\to\infty}L_{0,d} \, \sqrt{d} = \infty 
\qquad \mbox{and} \qquad \limsup_{d\to\infty}L_{1,d} \, d = \infty.
\] 
\end{rem}

\begin{rem}
Note that instead of this rather complicated function $p$ it would be possible 
to work with the function $\wt{p}(t)=t/(\delta^2 d)$ to obtain essentially the 
same result. But in this case one would have to do the analysis directly 
for the functions restricted to the subsets $D_d\subset\R^d$ and, 
in addition, one would obtain the desired lower 
bound on the information complexity
only for sufficiently small $\eps$ and not for all $\eps<1$ as above.
\end{rem}

\subsection{Upper Bounds}   \label{subsec:Upper Bounds 2}

In the last subsection we proved  the curse of dimensionality for
function classes with (roughly speaking) 
Lipschitz constant of the gradient bounded 
from below by a positive multiple of~$1/d$.
In this subsection we complement this result 
by proving matching upper bounds.
Again using a one point formula is enough to ensure an arbitrary
small error as long as $d$ 
is large enough and $\lim_{d\to\infty} L_{1,d} \, d=0$.

We present two versions of this result. 
One that holds for convex sets with Property 
$(\Pbf)$ and one that holds for arbitrary convex domains.

Again, we want to deal with a function class as large as possible. 
Therefore, we drop the bounds $\|f\|_\infty\le1$ and 
$\Lip(f) \le L_{0,d}$ and consider 
\[
F_d^1 = \bigl\{ f \colon  D_d \to \R \ \mid \ \ \Lip(\nabla f) \le L_{1,d} \bigr\} 
\,\supset\, C_d^1(L).
\]

\begin{prop} \label{prop:ub_lipgrad_convex}
Let $(D_d)_{d\in\N}$ be convex sets with $\lambda(D_d)=1$.
Then the information complexity for the classes $C_d^1(L)$ and $F_d^1$ 
satisfy 
$$
n(\eps,C_d^1(L))=n(\eps,F_d^1)=1
$$
provided that $L_{1,d}\;\diam(D_d)^2\le\eps$, 
where $\diam(D_d)$ is the diameter of the set $D_d$.
Hence, if
\[
\lim_{d\to\infty}L_{1,d}\;\diam(D_d)^2=0, 
\]
then
\[
n(\eps,C_d^1(L))=n(\eps,F_d^1) \,=\, 1
\]
for all $\eps\in(0,1)$ if $d\ge d(\eps)$ is large enough.
\end{prop}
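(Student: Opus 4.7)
The plan is to use the one-point cubature rule $A_{1,d}(f) = f(\bar{x})$, where $\bar{x} := \int_{D_d} x \, \dint x$ is the centroid of $D_d$. Since $D_d$ is convex and has finite volume one, $\bar{x}\in D_d$. Because $F_d^1 \supset C_d^1(L)$, it suffices to establish $n(\eps, F_d^1) \le 1$ and then to prove the matching lower bound $n(\eps, \cdot) \ge 1$.

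For the upper bound, the key observation is that a Lipschitz gradient yields a second-order Taylor remainder. For any $f \in F_d^1$ and any $x \in D_d$, convexity of $D_d$ allows the fundamental theorem of calculus along the segment from $\bar{x}$ to $x$, and a short calculation gives
\[
\bigl| f(x) - f(\bar{x}) - \langle \nabla f(\bar{x}), x - \bar{x} \rangle \bigr|
\,\le\, \tfrac{L_{1,d}}{2}\, \|x - \bar{x}\|_2^2.
\]
Integrating this inequality over $D_d$, the linear term disappears because $\int_{D_d} (x - \bar{x})\, \dint x = 0$ by definition of the centroid. Using $\|x-\bar{x}\|_2 \le \diam(D_d)$ and $\lambda_d(D_d)=1$, I obtain
\[
|S_d(f) - A_{1,d}(f)| \,\le\, \tfrac{L_{1,d}}{2}\, \int_{D_d} \|x-\bar{x}\|_2^2\, \dint x
\,\le\, \tfrac{L_{1,d}}{2}\, \diam(D_d)^2 \,\le\, \tfrac{\eps}{2},
\]
which shows $e(A_{1,d}) \le \eps$, hence $n(\eps, F_d^1) \le 1$.

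For the matching lower bound $n(\eps, C_d^1(L)) \ge 1$ (and thus $n(\eps, F_d^1) \ge 1$), the argument is exactly as in Proposition~\ref{prop:ub_lip}: the constant functions $f\equiv 1$ and $f\equiv -1$ both belong to $C_d^1(L)$, so any zero-sample algorithm outputs a single constant $\alpha$ with error $\max(|1-\alpha|, |{-1}-\alpha|) \ge 1 > \eps$. The final assertion is immediate: if $\lim_{d\to\infty} L_{1,d}\diam(D_d)^2 = 0$, then the hypothesis $L_{1,d}\diam(D_d)^2 \le \eps$ is satisfied for all $d \ge d(\eps)$, and the first part applies.

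There is no real obstacle here; the proof is essentially a one-line consequence of the centroid trick. The only point worth emphasizing is that convexity of $D_d$ is used twice: once to ensure $\bar{x} \in D_d$ so the algorithm is admissible, and once to justify the segment-based Taylor remainder. Note also that, unlike Proposition~\ref{prop:ub_lip}, Property~$(\Pbf)$ is not needed for this version; a second version with $(\Pbf)$ in place of a bound on $\diam(D_d)$ would proceed analogously by splitting the integral over $D_d\cap B_R^d(x_d^*)$ and its complement, controlling the first piece by the Taylor bound with diameter $2R\sqrt{d}$ and the second piece by the Lipschitz bound of $f$ times a vanishing volume.
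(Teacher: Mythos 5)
Your proof is correct and follows essentially the same route as the paper: a one-point rule at the centroid, the vanishing of the linear Taylor term by the centroid property, and a bound on the remainder via the Lipschitz constant of the gradient together with $\|x-\bar x\|_2\le\diam(D_d)$, plus the constant-function argument for $n\ge 1$. The only (harmless) difference is that you use the integral form of the Taylor remainder, which gives the slightly better constant $L_{1,d}\diam(D_d)^2/2$, whereas the paper's mean value theorem argument gives $L_{1,d}\diam(D_d)^2$.
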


\begin{proof}
We use the one point formula
$A_{1,d}(f) = f(z)$ where $z\in D_d$ is the centroid (center of gravity)  of $D_d$.
Then we have
$$
  S_d(f) - A_{1,d}(f)  =  \int_{D_d} \big( f(x)-f(z) \big) \, {\rm d} x
= \int_{D_d} r(x) \, {\rm d} x
$$
with
$$
 r(x) = f(x)-f(z) - \nabla f (z) \cdot (x-z)
$$
since the integral over $D_d$ of the function $a\cdot(x-z)$
vanishes for any $a\in\R^{d}$.
We estimate $r(x)$ by using the mean value theorem,
which implies the existence of a point~$y$ on the segment $[x,z]$ such that
$$
 f(x)-f(z) = \nabla f (y) \cdot (x-z).
$$
From the Cauchy-Schwarz inequality we conclude that for $f\in F_d^1$ we have
\begin{eqnarray*}
 |r(x)| &=&   \big| \big( \nabla f (y) - \nabla f (z) \big)
\cdot (x-z) \big|
        \le \Vert \nabla f (y) - \nabla f (z)\, \Vert_2
\Vert x-z \Vert_2 \\
        &\le& L_{1,d} \,\Vert y-z \Vert_2 \,\Vert x-z \Vert_2
        \le L_{1,d} \, \Vert x-z \Vert_2^2
        \le L_{1,d}\, \diam(D_d)^2.
\end{eqnarray*}
Hence, the error of $A_{1,d}$ on $F_{d}^1$ is 
bounded by $L_{1,d}\, \diam(D_d)^2$
and tends to zero for $d\to\infty$. Therefore, the curse of 
dimensionality is
not present since
$n(\eps, F_{d}^1) \le1$ 
for any $\eps >0$ and large enough $d$. 
Repeating the argument used in the proof of 
Proposition~\ref{prop:ub_lip} 
we conclude that $n(\eps, F_{d}^1)=1$, as claimed.  
\end{proof}

Note that Proposition~\ref{prop:ub_lipgrad_convex} is already enough to prove 
a part of the necessary conditions in Theorem~\ref{thm3}.
Namely, it is enough to conclude that $\lim_{d\to\infty}L_{1,d}=0$ implies that the curse does not hold.

Proposition~\ref{prop:ub_lipgrad_convex}  does not hold if $D_d$'s are $\ell_1^d$ balls which 
satisfy $(\Pbf)$. Indeed, in this case,
Proposition~\ref{prop:ub_lipgrad_convex} 
would require that $L_{1,d}\,d^2\to0$ instead
of $L_{1,d}\,d\to0$.
However, the next proposition shows that, if  the sequence $(D_d)_{d\in\N}$ 
satisfies $(\Pbf)$, then the curse does not hold if $L_{1,d}\,d$ tends to zero. 

Unfortunately, we cannot omit the bound on the supremum of $f$ as for the class $F^1_d$.
Therefore, we consider the classes
\[
F_d^2 = \bigl\{ f \colon  D_d \to \R \ 
\mid \ \|f\|_\infty\le1,\ \Lip(\nabla f) \le L_{1,d} \bigr\} 
\,\supset\, C_d^1(L).
\]

\begin{prop} \label{prop:ub_lipgrad_P}
Let $(D_d)_{d\in\N}$ be a sequence of convex sets that satisfies $(\Pbf)$.
Additionally assume 
\[
\lim_{d\to\infty}L_{1,d}\,d=0.
\]
Then the information complexity for the classes $C_d^1(L)$ and $F_d^2$ 
satisfy 
\[
n(\eps,C_d^1(L))=n(\eps,F_d^2) \,=\, 1
\]
for all $\eps\in(0,1)$ if $d\ge d(\eps)$ is large enough.
\end{prop}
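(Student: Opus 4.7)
The plan is to use the one-point formula $A_{1,d}(f) = f(z_d)$ where $z_d \in D_d$ is the centroid of the convex set $D_d$. As in the proof of Proposition~\ref{prop:ub_lipgrad_convex}, the Taylor bound $|f(x) - f(z_d) - \nabla f(z_d)\cdot(x - z_d)| \le \tfrac{L_{1,d}}{2}\|x-z_d\|_2^2$, valid for every $f\in F_d^2$, together with the centroid identity $\int_{D_d}(x - z_d)\,\dint x = 0$, gives
\[
 |S_d(f) - f(z_d)| \;\le\; \frac{L_{1,d}}{2} \int_{D_d} \|x - z_d\|_2^2 \,\dint x.
\]
It thus suffices to show that $\int_{D_d}\|x-z_d\|_2^2\,\dint x = O(d)$. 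The coarser bound $\int\|x-z_d\|_2^2\,\dint x \le \diam(D_d)^2$ used in Proposition~\ref{prop:ub_lipgrad_convex} is no longer adequate because $\diam(D_d)$ may be much larger than $\sqrt d$ (e.g.~of order $d$ for volume-normalized $\ell_1^d$-balls); I will instead exploit $(\Pbf)$ together with the convexity of $D_d$.

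Since the centroid minimizes the second moment, $\int_{D_d}\|x-z_d\|_2^2\,\dint x \le \int_{D_d}\|x-x_d^*\|_2^2\,\dint x$. Splitting this along the bulk $D_d \cap B_R^d(x_d^*)$, the integrand there is at most $R^2 d$ and $\lambda_d(D_d)=1$, so the bulk contributes at most $R^2 d$. The tail $D_d \setminus B_R^d(x_d^*)$ is the delicate piece. The uniform probability measure $\mu$ on the convex body $D_d$ is log-concave, and $(\Pbf)$ forces $\mu(B_R^d(x_d^*))\to 1$, so for $d$ large enough this measure exceeds $3/4$. Borell's lemma applied to the symmetric convex set $B_R^d(x_d^*)-x_d^*$ then yields an exponential tail bound of the form $\mu(\|x-x_d^*\|_2 > tR\sqrt d) \le C_1 e^{-c_1 t}$ for $t\ge 1$. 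Integration by parts produces $\int_{D_d\setminus B_R^d(x_d^*)}\|x-x_d^*\|_2^2\,\dint x = O(R^2 d)$, and hence $\int_{D_d}\|x-z_d\|_2^2\,\dint x \le C d$.

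Combining the two steps, $|S_d(f) - f(z_d)| \le \tfrac{C L_{1,d} d}{2} \to 0$ as $d\to\infty$ since $L_{1,d}\,d\to 0$, so the worst-case error of $A_{1,d}$ on $F_d^2\supset C_d^1(L)$ tends to zero and therefore $n(\eps, F_d^2)\le 1$ for all sufficiently large $d$. The matching lower bound $n(\eps, F_d^2)\ge 1$ for $\eps\in(0,1)$ follows exactly as in the proof of Proposition~\ref{prop:ub_lip}, since the constant functions $f\equiv\pm 1$ both belong to $C_d^1(L) \subset F_d^2$. The main obstacle is the second-moment estimate: $(\Pbf)$ by itself only controls the measure of the tail, not how far the tail extends, so it is convexity of $D_d$, through the log-concavity of its uniform measure and Borell's inequality, that converts this mass bound into a quantitative moment bound of the correct order $d$; neither hypothesis alone suffices.
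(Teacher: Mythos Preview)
Your proof is correct but follows a genuinely different route from the paper's. The paper chooses $z$ to be the centroid of the \emph{truncated} body $D_d\cap B_R^d(x_d^*)$ rather than of $D_d$. The linear term then vanishes only over $D_d\cap B$, and the error splits as
\[
|S_d(f)-f(z)| \,\le\, \int_{D_d\cap B}|\tilde r(x)|\,\dint x \;+\; \int_{D_d\setminus B}|f(x)-f(z)|\,\dint x,
\]
where the first piece is bounded by $R^2 L_{1,d}\,d$ via the diameter of $B$, and the second by $2\,\lambda_d(D_d\setminus B)\to 0$ using $\|f\|_\infty\le 1$. This is entirely elementary and avoids any log-concavity machinery; the price is that the sup-norm bound is genuinely used, which is why the paper introduces $F_d^2$ and remarks that it cannot drop $\|f\|_\infty\le 1$.

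Your argument trades this elementary splitting for Borell's inequality: by keeping $z_d$ as the centroid of all of $D_d$ you get the global Taylor bound $|S_d(f)-f(z_d)|\le \tfrac{L_{1,d}}{2}\int_{D_d}\|x-z_d\|_2^2\,\dint x$, and then convexity (log-concavity of the uniform measure) together with $(\Pbf)$ gives $\int_{D_d}\|x-z_d\|_2^2\,\dint x = O(d)$. This is less elementary, but it has a real payoff: you never touch $\|f\|_\infty$, so your argument in fact establishes the conclusion for the larger class $F_d^1=\{f:\Lip(\nabla f)\le L_{1,d}\}$, strengthening the proposition and showing that the paper's caveat before the statement is not actually needed under the convexity hypothesis.
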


\begin{proof}
We use the same techniques as in the proofs of
Propositions~\ref{prop:ub_lip} and \ref{prop:ub_lipgrad_convex}.
Now, the algorithm is the one point formula
$A_{1,d}(f) = f(z)$ where $z\in D_d$ is the centroid of $D_d\cap B_R^d(x^*_d)$, 
where $R$ and $x^*_d$ are from $(\Pbf)$.
Set $B=B_R^d(x^*_d)$. 
Then we have
\[
S_d(f) - A_{1,d}(f)  =  \int_{D_d} \big( f(x)-f(z) \big) \, {\rm d} x
= \int_{D_d\cap B} \wt r(x) \, {\rm d} x 
+ \int_{D_d\setminus B} \big( f(x)-f(z) \big) \, {\rm d} x
\]
with
$$
 \wt r(x) = f(x)-f(z) - \nabla f (z) \cdot (x-z)
$$
since the integral over $D_d\cap B$ of the function $a\cdot(x-z)$
vanishes for any $a\in\R^{d}$.
We estimate $\wt r(x)$ in the same way as in the proof of 
Proposition~\ref{prop:ub_lipgrad_convex} (using $\diam(D_d\cap B)\le R\sqrt{d}$) 
and the second term as in Proposition~\ref{prop:ub_lip} (using $\|f\|_\infty\le1$).
We obtain
\[
|S_d(f) - A_{1,d}(f)| \le R^2 L_{1,d}\, d + 2\,\lambda_d\bigl(D_d\setminus B\bigr)
\]
for all $f\in F^2_d$, 
which, under the assumptions 
of the proposition, tends to zero for $d\to\infty$. 
The rest is as before.
\end{proof}

\section{Functions with Higher Smoothness} \label{sec:Functions with Higher Smoothness}

In this section we deal with the general classes 
$$ C_d^{k}(L) =\{f\in C^{k}(D_d)\ \ \big|\ \ \ \|f\|_\infty\le1,\ 
        \Lip( f^{(j)}) \le L_{j,d}\ \mbox{for } j=0,1,\dots,k\, \}.$$
For $k>1$, our lower and upper bounds will not match anymore even if $D_d=(0,1)^d$.
The upper bound is proved using Taylor's formula which leads to an additional factor 
$1/\sqrt{d}$ for each additional derivative. 
In the proof of the lower bound we will use the smoothing by convolution which 
does not give any additional gain in the
bounds for the higher derivatives. We are stuck with $1/d$ starting from $r=1$.

The main result of this section is the following.

\begin{thm} \label{thm4}
Let $(D_d)_{d\in\N}$ be cubes $(0,1)^d$ or 
sets of small radius in the sense of \eqref{smallradius}. 
For all $k\in\N$, the conditions
\[
\limsup_{d\to\infty}L_{0,d} \, \sqrt{d} > 0 \qquad \mbox{and} \qquad 
\limsup_{d\to\infty}L_{j,d} \, d > 0 \ \mbox{ for }\ j=1,\dots,k
\]
imply the curse of dimensionality for $C_d^k(L)$.

On the other hand, if $(D_d)_{d\in\N}$ is a sequence of convex sets 
with $\lambda(D_d)=1$ and 
there exists $j\in\{0,1,\dots,k\}$ such that
$$ \lim_{d\to\infty}L_{j,d} \, d^{\frac{j+1}{2}} = 0 $$
then the curse does not hold.
\end{thm}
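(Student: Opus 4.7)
The proof splits naturally into a lower bound (curse) and an upper bound (no curse). My plan is to obtain the lower bound by lifting the $C^{1,1}$ fooling function of Section~\ref{subsec:Lower Bounds 2} to a $C^{k,1}$ fooling function via the convolution smoothing of Theorem~\ref{thm:conv}, and to obtain the upper bound by using a cubature rule of polynomial-degree-$j$ exactness with only polynomially many nodes, whose error on $C^k$ functions is controlled by $L_{j,d}$ times the $(j+1)$-th moment of $D_d$.

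For the curse direction, let $x_1,\dots,x_n\in D_d$ be arbitrary sample points with convex hull $K$, and start from the fooling function $f=p\circ\phi$ used in the proof of Proposition~\ref{prop:lb_lipgrad}, where $\phi(x)=\dist(x,K_\delta)^2$ and $p$ is the explicit piecewise function of that proof. Thus $f$ vanishes on $K_\delta$, equals one outside $K_{2\delta}$, satisfies $\|f\|_\infty\le 1$, $\Lip(f)\le 2/(\delta\sqrt d)$ and $\Lip(\nabla f)\le 40/(\delta^2 d)$, and in particular is $C^{1,1}$. To boost the smoothness to $C^{k,1}$, I convolve with $k-1$ of the normalized ball indicators $g_1,\dots,g_{k-1}$ from Theorem~\ref{thm:conv}, taking equal bandwidths $\alpha_i=1/(k-1)$ so that $\sum_i \alpha_i\le 1$. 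The relevant items of Theorem~\ref{thm:conv} then provide everything needed: (i) forces $f_{k-1}(x_i)=0$ for each sample point, since $\{x_1,\dots,x_n\}\subset K$ and $f\equiv 0$ on $K_\delta$; (iii), combined with $f_{k-1}\ge 0$ (nonnegativity is preserved by convolution with nonnegative kernels), yields $\int_{D_d} f_{k-1}\,\dint x \ge 1-\lambda_d(D_d\cap K_{3\delta})$; and (vi), (vii) give $\Lip(f_{k-1})\le 2/(\delta\sqrt d)$ and $\Lip(f_{k-1}^{(j)})\le C(k,\delta)/d$ for every $j=1,\dots,k$. The volume estimate from Theorem~\ref{thm:volume} (for $D_d=(0,1)^d$) or Theorem~\ref{thm:volume-rad} (for small-radius $D_d$) then makes $\lambda_d(D_d\cap K_{3\delta})$ exponentially small in $d$ unless $n$ is already exponentially large, establishing the curse for the specific sequence $L^*$ coming out of the construction. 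The standard rescaling $\widebar C_d^k(aL)\subset a\widebar C_d^k(L)$ from the proof of Proposition~\ref{prop:lb_lipgrad} then propagates this to every $L$ satisfying the hypotheses.

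For the no-curse direction, fix the index $j$ with $L_{j,d}d^{(j+1)/2}\to 0$. I would use a positive cubature rule $A_N$ on $D_d$ with $N\le\binom{d+j}{j}=O(d^j)$ nodes that integrates all polynomials of total degree at most $j$ exactly; existence with nonnegative weights summing to $\lambda_d(D_d)=1$ is guaranteed by Tchakaloff's theorem. For $f\in C_d^k(L)$, Taylor expansion at the centroid $z$ of $D_d$ gives $f=T_j+R$ with $|R(x)|\le \tfrac{L_{j,d}}{(j+1)!}\|x-z\|_2^{j+1}$; since $A_N$ reproduces $T_j$, the error satisfies
\[
|S_d(f)-A_N(f)| \,\le\, \frac{L_{j,d}}{(j+1)!}\,\Bigl(\int_{D_d}\|x-z\|_2^{j+1}\,\dint x\,+\,\max_{1\le i\le N}\|x_i-z\|_2^{j+1}\Bigr).
\]
The technical crux, and the main obstacle I anticipate, is to bound both the $(j+1)$-th moment of the uniform law on $D_d$ around the centroid and the distances $\|x_i-z\|_2$ from the cubature nodes by $O(d^{(j+1)/2})$. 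For convex $D_d$ of unit volume this can be handled under appropriate geometric hypotheses (e.g., property~$(\Pbf)$, small radius, or isotropic position with bounded isotropic constant), in which case the error tends to zero while $N=O(d^j)$, so the information complexity is polynomial in $d$ and the curse is ruled out.
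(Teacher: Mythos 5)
Your lower bound is essentially the paper's own argument: the same fooling function $p\circ\phi$ from Section~\ref{subsec:Lower Bounds 2}, the same convolution with $k-1$ equal-radius kernels via Theorem~\ref{thm:conv} (giving $\Lip(f_{k-1})\le 2/(\delta\sqrt d)$ and $\Lip(f_{k-1}^{(j)})\le \frac{40}{\delta^2 d}(\frac{k-1}{\delta})^{j-1}$), the same volume estimates (Theorem~\ref{thm:volume-rad} for small radius, Theorem~\ref{thm:volume} for the cube) and the same rescaling $\widebar{C}_d^k(aL)\subset a\widebar{C}_d^k(L)$; your use of item $(iii)$ plus nonnegativity in place of the paper's observation that $f_{k-1}\equiv1$ outside $K_{3\delta}$ is an immaterial variation. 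The upper bound is where you genuinely diverge. The paper handles $j=0,1$ by one-point rules (Propositions~\ref{prop:ub_lip}, \ref{prop:ub_lipgrad_convex}, \ref{prop:ub_lipgrad_P}) and, for $2\le j\le k$, integrates the degree-$j$ Taylor polynomial at the point $x_d^*$ from $(\Pbf)$ over $D_d\cap B_R^d(x_d^*)$, discards the complement using $\|f\|_\infty\le1$, and implements the rule with at most ${\rm e}^jd^j$ function values via divided differences (Proposition~\ref{prop:ub_higher}, following Vyb\'iral). Your Tchakaloff cubature with nonnegative weights, exact on polynomials of degree $\le j$ with at most $\binom{d+j}{j}$ nodes, treats all $j\in\{0,\dots,k\}$ uniformly and is a genuine function-value algorithm from the start, so the divided-difference step disappears; the price is that your error is governed by the $(j+1)$-st moment about the expansion point and by the node locations, which is precisely the role played by the paper's truncation to $B_R^d(x_d^*)$.

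The one point you must not leave conditional is that ``crux'', because under the hypotheses of the theorem it is immediate: for the cube and for sets of small radius one has $\rad(D_d)\le c\sqrt d$, so every point of $\overline{D_d}$ --- in particular every Tchakaloff node, which may be taken in the support of the uniform measure --- lies within $2c\sqrt d$ of the Chebyshev center, and both the moment and the node term are $O(d^{(j+1)/2})$; hence the error tends to zero with $O(d^j)$ nodes and the curse fails. For literally arbitrary convex sets of volume one (elongated bodies with huge diameter) your bound does not tend to zero, but the paper is in the same position: Section~\ref{subsec:Upper Bounds 3} proves the no-curse part only under $(\Pbf)$ plus convexity, and under $(\Pbf)$ your route also closes if you imitate the paper, applying Tchakaloff to the measure restricted to $D_d\cap B_R^d(x_d^*)$ and bounding the discarded part by $2\lambda_d\bigl(D_d\setminus B_R^d(x_d^*)\bigr)$ via $\|f\|_\infty\le1$. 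Two harmless details: for $j=k$ the remainder constant is $L_{k,d}/k!$ (Lipschitz form of the remainder, as in the paper), not $L_{k,d}/(k+1)!$, since $f^{(k+1)}$ need not exist; and since sample points must lie in the open set $D_d$, any node placed on the boundary has to be nudged inward, which, using $\Lip(f)\le L_{0,d}<\infty$, costs an arbitrarily small additional error.
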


\subsection{Lower Bounds}\label{subsec:Lower Bounds 3}

The lower bounds on the information complexity in this case are 
mainly based on Theorem~\ref{thm:conv}, which shows that 
convolution with certain indicator functions, see \eqref{eq:gk}, 
increases the smoothness of the initial function by loosing 
only a factor
in the bounds on the Lipschitz constants of 
the higher order derivatives. 

For this, recall from Section~\ref{subsec:Convolutions} that we have 
fixed a number $\delta>0$, $\ell\in\N$ 
and a sequence $(\alpha_j)_{j=1}^\ell$ with $\alpha_j>0$  such that
\[
\sum_{j=1}^\ell \alpha_j \le 1.
\]
For the purpose of this section we choose $\alpha_j=1/\ell$ for $j=1,\dots,\ell$.

Additionally, recall that $g_j$ is the normalized indicator function 
of a ball with radius $\alpha_j \delta \sqrt{d}$ and that we define
\[
f_\ell = f\ast g_1\ast\cdots\ast g_\ell.
\]

Let $x_1,\dots,x_n\in D_d$ be the sampling points and $K$ their convex hull. 
The initial function for the convolution will be the function $f\colon \R^d\to\R$
that was constructed in Section~\ref{subsec:Lower Bounds 2}.
This function satisfies the following properties:
\begin{itemize}
        \item $f\in C^1$, 
        \item $f(x)=0$ for $x\in K_\delta$,
        \item $f(x)=1$ for $x\notin K_{2\delta}$,
        \item $\Lip(f) \le \frac{2}{\delta \sqrt{d}}$ and 
        \item $\Lip(f^{(1)}) \le \frac{40}{\delta^2 d}$.
\end{itemize}
By Theorem~\ref{thm:conv} we immediately obtain 
\begin{itemize}
        \item $f_\ell\in C^{\ell+1}$, 
        \item $f_\ell(x)=0$ for $x\in K$,
        \item $f_\ell(x)=1$ for $x\notin K_{3\delta}$,
        \item $\Lip(f_\ell) \le \frac{2}{\delta \sqrt{d}}$ and 
        \item $L^*_{j,d} := \Lip(f_\ell^{(j)}) \le \frac{40}{\delta^2 d} 
                \,\left(\frac{\ell}{\delta}\right)^{j-1}$, 
                                $j=1,2,\dots,\ell+1$.
\end{itemize}
Thus, setting $\ell=k-1$, we have
$f_\ell|_{D_d}\in \widebar{C}_d^k(L^*)\subset C_d^k(L^*)$, where
\begin{equation}\label{eq:L_star}
L^*_{0,d} = \frac{2}{\delta \sqrt{d}} \quad\text{ and }\quad 
L^*_{j,d} = \frac{40}{\delta^2 d} \,
\left(\frac{k-1}{\delta}\right)^{j-1}\ \ \ \mbox{for}\ \  
j=1,\dots,k.
\end{equation}
By the third property of $f_\ell$ we additionally obtain
\[
\int_{D_d} f_\ell(x) \,\dint x 
\,\ge\, \int_{D_d\setminus K_{3\delta}} f_\ell(x) \,\dint x 
\,=\, 1- \lambda_d(K_{3\delta}\cap D_d).
\]

Using the upper bounds for the volume on the right hand side from 
Section~\ref{subsec:Convex Hull}, which were already used in the proofs 
of Proposition~\ref{prop:lb_lipgrad} and \ref{prop:lb_lipgrad2}, 
we obtain the desired lower bounds on the information complexity.
In particular, we obtain for small enough positive $\delta$
(depending only on the sequence $(D_d)_{d\in\N}$), 
that there exists $\eta=\eta(\delta)>1$ 
such that  
\begin{equation}\label{eq:n_higher}
n(\eps,C_d^k(L^*)) \,\ge\, (1-\eps)\, \eta^d
\end{equation}
for all $k\in\N$, $\eps\in(0,1)$ and infinitely many $d\in\N$, 
where the sequence $L^*$ is given by~\eqref{eq:L_star}.

Using the same scaling technique that was used in 
Sections~\ref{subsec:Lower Bounds 1} and \ref{subsec:Lower Bounds 2} 
we obtain the curse of dimensionality
under the assumptions of Theorem~\ref{thm4}.

\begin{rem}
If $D_d$ is the cube we could also give an explicit lower bound 
on the information complexity 
for the class $C_d^k(L^*)$ with $L^*=(L^*_{j,d})$ from above. 
In fact, the same lower bound as in Proposition~\ref{prop:lb_lipgrad} 
holds for every $k\in\N$ if we set $\delta=1/300$.
\end{rem}

\subsection{Upper Bounds}    \label{subsec:Upper Bounds 3}

We now prove that the curse of dimensionality does not hold if the condition 
$$ \lim_{d\to\infty}L_{j,d} \, d^{\frac{j+1}{2}} = 0 $$
holds for some $j\in\{0,1,\dots,k\}$. We first observe that the cases 
$j=0$ and $j=1$ were already dealt with in the previous sections.
For $j\in\{2,3,\dots,k\}$, we use the next proposition.
Note that we prove the result under the 
assumption that the 
sets satisfy Property $(\Pbf)$. Both, the cube and sets of small radius, 
satisfy this assumption.

\begin{prop} \label{prop:ub_higher}
Let $(D_d)_{d\in\N}$ be a sequence of convex sets that satisfies $(\Pbf)$.
Additionally assume that there exists $j\in\{2,3,\dots,k\}$ such that
\[
\lim_{d\to\infty}L_{j,d}\,d^{\frac{j+1}{2}}=0.
\]
Then the information complexity for the classes $C_d^k(L)$ satisfies 
\[
n(\eps,C_d^k(L)) 
 \,\le \, {\rm e}^j \, d^{j}
\]
for all $\eps\in(0,1)$ if $d\ge d(\eps)$ is large enough.
\end{prop}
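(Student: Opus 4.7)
The plan is to combine a Taylor expansion of $f$ with a positive cubature formula that is exact on polynomials of total degree at most $j$. The Lipschitz bound $\Lip(f^{(j)}) \le L_{j,d}$ gives, via the integral form of Taylor's remainder at any center $z$, that
\[
|f(x) - T_j(f)(x)| \,\le\, \frac{L_{j,d}}{(j+1)!}\,\|x-z\|_2^{j+1},
\]
so on a ball of radius $R\sqrt{d}$ this remainder is $O\bigl(L_{j,d}\,d^{(j+1)/2}\bigr) = o(1)$ exactly under the hypothesis on $L_{j,d}$. If the cubature integrates $T_j(f)$ exactly and is positive with total weight at most $1$, then the overall integration error is dominated by this remainder.

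For the cubature, I would first use $(\Pbf)$ to set $B = B_R^d(x_d^*)$ and $\wt D_d = D_d \cap B$, which is convex (as the intersection of two convex sets) and has $\lambda_d(D_d\setminus \wt D_d) \to 0$. Then I would invoke Tchakaloff's theorem, applied to the finite positive measure $\lambda_d|_{\wt D_d}$, to produce nodes $x_1,\dots,x_N \in \wt D_d$ and positive weights $w_i$ with $\sum w_i = \lambda_d(\wt D_d) \le 1$ and $N \le \binom{d+j}{j}$, such that $A_{N,d}(f) = \sum_{i=1}^N w_i f(x_i)$ is exact on all polynomials of total degree $\le j$ against $\lambda_d|_{\wt D_d}$.

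Taking $z$ to be, say, the centroid of $\wt D_d$, so that $\|x-z\|_2 \le 2R\sqrt{d}$ for $x \in \wt D_d$, the error would split cleanly as
\[
S_d(f) - A_{N,d}(f) \,=\, \int_{D_d\setminus\wt D_d} f\,\dint x \,+\, \int_{\wt D_d} R_j(x)\,\dint x \,-\, \sum_{i=1}^N w_i\, R_j(x_i),
\]
where the first term is at most $\lambda_d(D_d\setminus\wt D_d)$ (using $\|f\|_\infty\le 1$) and the other two are each at most $\sup_{\wt D_d}|R_j| \le \frac{L_{j,d}(2R)^{j+1}}{(j+1)!}\,d^{(j+1)/2}$ (using $w_i>0$ and $\sum w_i \le 1$). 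Both contributions vanish as $d\to\infty$. The count follows from $\binom{d+j}{j} \le (d+j)^j/j! \le (2d)^j/j! \le e^j d^j$ for $d \ge j$, which gives $n(\eps,C_d^k(L)) \le N \le e^j d^j$ for all $d$ large enough.

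The main obstacle will be the Tchakaloff step: one must ensure that a positive cubature with only $\binom{d+j}{j}$ nodes exists for the measure $\lambda_d|_{\wt D_d}$. This is classical and holds much more generally, but \emph{positivity} of the weights is essential, since it prevents $\sum|w_i|$ from blowing up and lets the uniform Taylor-remainder bound control the error directly. The Taylor step itself is routine, though it does require the integral form of the remainder because only $\Lip(f^{(j)}) \le L_{j,d}$ is assumed, not $f \in C^{j+1}$.
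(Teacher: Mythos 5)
Your argument is correct, but the key step is genuinely different from the paper's. The paper also Taylor-expands at $x_d^*$ and bounds the remainder on $D_d\cap B_R^d(x_d^*)$ exactly as you do, but its ``algorithm'' is $Q_{j,d}(f)=\int_{D_d\cap B_R^d(x_d^*)}T_j(x)\,\dint x$, which uses the partial derivatives $D^\beta f(x_d^*)$, $|\beta|\le j$; since only function values are admissible, the paper then approximates each of these $\binom{d+j}{j}$ derivatives by divided differences to arbitrary precision with a dimension-independent number of evaluations (citing Vyb\'iral), arriving at the same count $\binom{d+j}{j}\le \eu^j d^j$. You instead invoke Tchakaloff's theorem for the compactly supported measure $\lambda_d|_{\wt D_d}$ to get a positive cubature with at most $\binom{d+j}{j}$ nodes, exact on polynomials of degree $\le j$; this directly yields an admissible linear function-value algorithm, the positivity and the normalization $\sum_i w_i=\lambda_d(\wt D_d)\le 1$ (forced by exactness on constants) let the uniform remainder bound control the error, and your error decomposition into the tail term plus the two remainder terms is correct. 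What your route buys is that it bypasses the divided-difference/``arbitrary precision'' step entirely; what it costs is reliance on a nonconstructive existence theorem and the small technicality that Tchakaloff nodes lie in the support, i.e.\ in the closure of $\wt D_d$, which may touch $\partial D_d$ where $f$ is not defined --- fixable by shrinking the ball slightly or perturbing nodes into $D_d$ (each $f$ has $\Lip(f)\le L_{0,d}<\infty$, so the perturbation error is controllable for fixed $d$).

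One point to make precise: for $j=k$ the integral form of the remainder you quote is not literally available, since $f^{(j+1)}$ need not exist; as you note, one must instead expand to order $k-1$ and write the order-$k$ term as a difference $f^{(k)}(z+t(x-z))-f^{(k)}(z)$ controlled by $\Lip(f^{(k)})\le L_{k,d}$ (this is exactly what the paper does), and this indeed yields your constant $\frac{L_{j,d}}{(j+1)!}\|x-z\|_2^{j+1}$ since $k\int_0^1(1-t)^{k-1}t\,\dint t=\frac1{k+1}$. Also note that convexity of $D_d$ (assumed in the proposition) is what guarantees the Taylor segment from your center $z$ to $x$ stays in $D_d$, so it is used in the same implicit way as in the paper.
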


\begin{proof}
Let the sequence $(x_d^*)_{d\in\N}$ and $R<\infty$ be 
provided by Property $(\Pbf)$. 
Assume first that $j\in\{2,3,\dots,k-1\}$. 
Then we take a Taylor polynomial of order $j$ 
at the point $x_d^*\in D_d$ which can be written as
$$  
T_{j}(x)=\sum_{\ell=0}^{j} \frac{f^{(\ell)}(x_d^*)(x-x_d^*)^\ell}{\ell!}
\ \ \ \ \mbox{for all}\ \ \ \ x\in D_d.
$$
Here we use the standard notation 
$A(x^\ell)=A(x,\dots,x)$ for the evaluation of an
$\ell$-linear map on the diagonal.
Recall that we consider here $f^{(\ell)}(x_d^*)$ as an $\ell$-linear map.
It is well-known that the error of the approximation of $f$ by $T_j$ can be written as
$$
f(x)-T_{j}(x)=(j+1)\,\int_0^1(1-t)^{j}\,
\frac{ f^{(j+1)} \big(x_d^*+t(x-x_d^*)\big)(x-x_d^*)^{j+1}}{(j+1)!} \dint t.
$$
For $x\in D_d$ with $\|x-x_d^*\|_2\le R\sqrt{d}$ we can now estimate the error as 
$$
\left| f(x)-T_{j}(x) \right| \le  \frac{1}{j!} \,\int_0^1(1-t)^{j}\, \dint t \, 
\norm{f^{(j+1)}} \, \norm{x-x_d^*}_2^{j+1} 
\le \frac{R^{j+1}}{(j+1)!}\, L_{j,d} \, d^{\frac{j+1}{2}}.
$$
If $j=k$, we use the same approximation and note that
$$
f(x)-T_{k}(x) = f(x)-T_{k-1}(x) -  \frac{f^{(k)}(x_d^*)(x-x_d^*)^k}{k!} 
= k \,\int_0^1(1-t)^{k-1}\, r_x(t)\, \dint t
$$
with
$$
r_x(t)=\frac{ f^{(k)} \big(x_d^*+t(x-x_d^*)\big)(x-x_d^*)^k-f^{(k)} 
\big(x_d^*\big)(x-x_d^*)^k}{k!}.
$$
Then 
for $x\in D_d$ with $\|x-x_d^*\|_2\le R\sqrt{d}$, we get
$$
 |r_x(t)| \le \frac{\Lip( f^{(k)}) \, t \, \norm{x-x_d^*}_2^{k+1}}{k!} 
        \le \frac{R^{k+1}}{k!} \, L_{k,d} \, d^{\frac{k+1}{2}}
\ \ \ \mbox{for}\ \  t\in[0,1],
$$
   and 
$$
 \left| f(x)-T_{k}(x) \right| \le \frac{R^{k+1}}{k!} \, L_{k,d} \, d^{\frac{k+1}{2}}.
$$
So for all $j\in\{2,3,\dots,k\}$ we have 
$$
 \left| f(x)-T_{j}(x) \right|  \le \frac{R^{j+1}}{j!} \, L_{j,d} \, d^{\frac{j+1}{2}}
$$
if $\|x-x_d^*\|_2\le R\sqrt{d}$.
For all such $j$, consider the algorithm
$$
Q_{j,d}(f)=\int_{D_d\cap B_R^d(x_d^*)} T_j(x)\,{\rm d}x
=\sum_{\beta:\,|\beta|\le j}\frac{D^\beta f(x_d^*)}{\beta!}\
\int_{D_d\cap B_R^d(x_d^*)}\prod_{j=1}^d\left(x_j-x_{d,j}^*\right)^{\beta_j}\,\dint x.
$$
Since
$\|f\|_\infty\le1$, we obtain
\[\begin{split}
\left|\int_{D_d}f(x)\,\dint x\ - \ Q_{j,d}(f)\right| 
\;&\le\; \left|\int_{D_d\cap B_R^d(x_d^*)}\left(f(x)-T_j(x)\right)
\,\dint x\right| \,+\, 
     \left|\int_{D_d\setminus B_R^d(x_d^*)}f(x)\,\dint x\right| \\
\;&\le\; \int_{D_d\cap B_R^d(x_d^*)}\left|f(x)-T_j(x)\right|\,\dint x \,+\, 
                                \lambda_d\bigl(D_d\setminus B_R^d(x_d^*)\bigr) \\
&\le\; \frac{R^{j+1}}{j!} \, L_{j,d} \, d^{\frac{j+1}{2}} \,+\, 
                                \lambda_d\bigl(D_d\setminus B_R^d(x_d^*)\bigr).
\end{split}\]
By the assumptions of the proposition, both terms tend to zero as $d$ goes to infinity. 
Thus, $Q_{j,d}$ yields an arbitrary small error if the dimension is large enough.

It remains to bound the cost of $Q_{j,d}$.
First note that $Q_{j,d}$ is not an admissible algorithm since
we can compute only function values. 
However, it is easy to see that we can approximate each 
partial derivative $D^\beta f(x^*)$
by divided differences with an arbitrary precision by 
computing only a number of function values that does not depend on the 
dimension $d$ (but on $|\beta|\le j\le k$). 
More precisely, we can compute $Q_{j,d}(f)$ for $f\in C_d^k(L)$, $j\le k$, 
up to an arbitrary error $\eta>0$ using 
$$
\binom{d+j}{j}\le\Biggl(\frac{{\rm e}(d+j)}{j}\Biggr)^j 
\le {\rm e}^j d^j
$$
(for $d,j\ge2$)
function values of $f$, see \cite{Vyb13}.
The proposition follows.
\end{proof}

\subsection{Partial Derivatives}

In this section we comment on results that can be deduced directly from 
the already proven statements. 
In particular, we state results for classes $\wt{C}_d^k(L)$ that are defined 
like $C_d^k(L)$ from~\eqref{eq:class}, but with conditions on 
arbitrary directional derivatives replaced by conditions only on 
\emph{partial} derivatives. The results follow solely by inclusion.

We define the function classes by
\[
\wt{C}_d^k(L) \,=\, \biggl\{f\in C^{k}(D_d)\ \ \big|\ \ \ \|f\|_\infty \le1,\ 
        \sup_{|\beta|=j}\Lip( D^\beta f) \le L_{j,d}\ \mbox{for all } j\le k\, \biggr\}.
\]
It is easy to see that for each $f\in C^k(D_d)$ and $j\le k$
\[\begin{split}
\sup_{|\beta|=j}\Lip( D^\beta f) 
\;&\le\; \sup_{\theta_1,\dots,\theta_j\in \sphere^{d-1}}
                                \Lip( D^{\theta_j} \dots D^{\theta_1} f)
\;=\; \sup_{\theta\in \sphere^{d-1}}
                                \Lip( D^{\theta} \dots D^{\theta} f)\\
\;&=\; \Lip( f^{(j)} ).
\end{split}\]

Let $\theta=(\theta_1,\dots,\theta_d)\in\sphere^{d-1}$ 
and $x,y\in D_d$. Noting that
\[
f^{(j)}(x)(\theta,\dots,\theta) \,=\,
\sum_{i_1,i_2,\dots,i_j=1}^d\frac{\partial^j f}{\partial x_{i_1}\,
\partial x_{i_2}\,\cdots\,\partial x_{i_j}}(x)\,
\theta_{i_1}\,\theta_{i_2}\,\cdots\,\theta_{i_j}
\]
we obtain

\[\begin{split}
\bigl|f^{(j)}(x)(\theta,\dots,\theta)-f^{(j)}(y)(\theta,\dots,\theta)\bigr| 
\;
&\le\; \biggl(\sum_{i=1}^d|\theta_i|\biggr)^j\; 
                                \sup_{|\beta|=j}\, \bigl|D^\beta f(x)-D^\beta f(y)\bigr| \\
&\le\; d^{j/2}\; 
                                \sup_{|\beta|=j}\, \bigl|D^\beta f(x)-D^\beta f(y)\bigr|.
\end{split}\]
This implies 
$\Lip(f^{(j)}) \;\le\; d^{j/2}\,\sup_{|\beta|=j}\,\Lip(D^\beta f)$
and thus
\begin{equation}\label{dirpar}
C_d^k(L) \,\subset\, \wt{C}_d^k(L) 
\,\subset\, C_d^k\bigl((d^{j/2} L_{j,d})_{j,d\in\N}\bigr)
\end{equation}
for arbitrary double sequences $L=(L_{j,d})_{j,d\in\N}$. 
An immediate consequence is that all previously proven 
lower bounds on the information complexity for $C_d^k(L)$ also hold 
unchanged 
for $\wt{C}_d^k(L)$.
Moreover, we obtain from Theorem~\ref{thm4}
the following proposition.

\begin{prop} \label{prop:partial}
Let $(D_d)_{d\in\N}$ be cubes $(0,1)^d$
or convex sets of small radius in the sense of~\eqref{smallradius}. 
Then, for all $k\in\N$, the conditions
\[
\limsup_{d\to\infty}L_{0,d} \, \sqrt{d} > 0 \qquad \mbox{and} \qquad 
\limsup_{d\to\infty}L_{j,d} \, d > 0 \ \mbox{ for }\ j=1,\dots,k
\]
imply the curse of dimensionality for $\wt{C}_d^k(L)$.
If there exists $j\in\{0,1,\dots,k\}$ with
$$ \lim_{d\to\infty}L_{j,d} \, d^{j+\frac{1}{2}} = 0 $$
then the curse does not hold.
\end{prop}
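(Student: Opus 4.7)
The proposition is essentially a direct corollary of Theorem \ref{thm4} combined with the inclusion \eqref{dirpar} that was established just before the statement. The plan is to apply the two inclusions separately, one for the lower bound (curse) and one for the upper bound (no curse), and then match up the exponents of $d$.

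For the lower bound, I would observe that $C_d^k(L) \subset \wt{C}_d^k(L)$, so that the information complexity satisfies $n(\eps, \wt{C}_d^k(L)) \ge n(\eps, C_d^k(L))$. Under the assumptions $\limsup L_{0,d}\sqrt{d} > 0$ and $\limsup L_{j,d}\, d > 0$ for $j = 1,\dots,k$, the first half of Theorem \ref{thm4} provides the curse of dimensionality for $C_d^k(L)$ (provided $(D_d)$ is either the unit cube or a sequence of convex sets of small radius), and this curse is inherited by $\wt{C}_d^k(L)$.

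For the upper bound, I would use the other inclusion in \eqref{dirpar}, namely $\wt{C}_d^k(L) \subset C_d^k(L')$, where $L' = (L'_{j,d})$ with $L'_{j,d} = d^{j/2} L_{j,d}$. Hence $n(\eps, \wt{C}_d^k(L)) \le n(\eps, C_d^k(L'))$. Suppose now that for some $j \in \{0,1,\dots,k\}$ we have $\lim_{d\to\infty} L_{j,d}\, d^{j+1/2} = 0$. Then
\[
\lim_{d\to\infty} L'_{j,d}\, d^{(j+1)/2} \,=\, \lim_{d\to\infty} d^{j/2}\, L_{j,d}\, d^{(j+1)/2} \,=\, \lim_{d\to\infty} L_{j,d}\, d^{j+1/2} \,=\, 0.
\]
The second half of Theorem \ref{thm4} (which needs only convexity of $D_d$ with $\lambda_d(D_d)=1$, and both the unit cube and convex sets of small radius qualify) then implies that the curse does not hold for $C_d^k(L')$, and therefore it does not hold for the smaller class $\wt{C}_d^k(L)$ either.

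There is no serious obstacle; the entire argument is a bookkeeping exercise using the sandwich in \eqref{dirpar}, and the only thing to verify carefully is that the exponent $j+\tfrac{1}{2}$ in the no-curse condition of Proposition \ref{prop:partial} is exactly what one gets after inflating each Lipschitz constant by the factor $d^{j/2}$ needed to pass from partial to directional derivatives, i.e.\ that $(j/2) + ((j+1)/2) = j + 1/2$.
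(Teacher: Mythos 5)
Your proposal is correct and is exactly the argument the paper intends: the paper derives Proposition~\ref{prop:partial} "solely by inclusion" via the sandwich \eqref{dirpar}, using $C_d^k(L)\subset\wt{C}_d^k(L)$ to inherit the curse from the first half of Theorem~\ref{thm4} and $\wt{C}_d^k(L)\subset C_d^k\bigl((d^{j/2}L_{j,d})\bigr)$ together with its second half for the no-curse condition, with the same exponent bookkeeping $j/2+(j+1)/2=j+\tfrac12$.
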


\section{Functions with Infinite Smoothness}  
\label{sec:Functions with Infinite Smoothness}

In this section we deal with $C^\infty$ functions. 
The classes we consider are now of the form  
$$ C_d^{\infty}(L) =\{f\in C^{\infty}(D_d)\ \ \big|\ \ \ \|f\|_\infty\le1,\ 
        \Lip( f^{(j)}) \le L_{j,d}\ \mbox{for } j=0,1,2\dots\ \}.$$

The main result of this section is 

\begin{thm} \label{thm5}
Let $(D_d)_{d\in\N}$ be the cubes $(0,1)^d$
or 
sets of small radius in the sense of \eqref{smallradius}. 
Then the conditions
\[
\limsup_{d\to\infty}L_{0,d} \, \sqrt{d} > 0 \qquad \mbox{and} \qquad 
\limsup_{d\to\infty}L_{j,d} \, d > c\, (j!)^{1+\eta} \ \mbox{ for }\ j\in\N,
\]
where $c,\eta>0$,
imply the curse of dimensionality for $C_d^\infty(L)$.

On the other hand, if $D_d$ is convex and 
there exists $j\in\N$ with
$$ \lim_{d\to\infty}L_{j,d} \, d^{\frac{j+1}{2}} = 0 $$
then the curse does not hold.
Furthermore, if there exist constants $c<\infty$ and $\delta>0$ such that 
\[
L_{j,d} \,\le\, c\, (2-\delta)^{j}\, j!\, \,d^{-\frac{j+1}{2}} 
\]
for all $j,d\in\N$, then the problem of numerical integration for the 
classes $C_d^\infty(L)$ is 
quasi-polynomially tractable, i.e., 
\[
\ln\bigl(n(\eps,C_d^\infty(L))\bigr) \,\le\, C (1-\ln\eps)(1+\ln d)
\]
for some absolute $C<\infty$.
\end{thm}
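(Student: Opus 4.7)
The theorem splits into three parts. The middle assertion is immediate: since $C_d^\infty(L) \subset C_d^j(L)$, Theorem~\ref{thm4} directly gives $n(\eps, C_d^\infty(L)) \le \eu^j d^j$ for all $d \ge d(\eps)$ whenever $\lim_{d\to\infty} L_{j,d}\, d^{(j+1)/2} = 0$.

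For the curse in part~(i), I would extend the convolution construction of Section~\ref{subsec:Lower Bounds 3} from finite $k$ to an infinite cascade using the sequence $\alpha_j = j^{-1-\eta}/\zeta(1+\eta)$ flagged in Section~\ref{subsec:Convolutions}. Starting from the $C^1$ function $f = p \circ \phi$ of Section~\ref{subsec:Lower Bounds 2}, which vanishes on $K_\delta$, equals one outside $K_{2\delta}$, and satisfies $\Lip(f) \le 2/(\delta\sqrt{d})$ and $\Lip(f^{(1)}) \le 40/(\delta^2 d)$, I would form $f_k = f \ast g_1 \ast \cdots \ast g_k$ and pass to $k\to\infty$. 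By Theorem~\ref{thm:conv} the sequence $(f_k)$ is uniformly bounded and all derivative sequences are uniformly equicontinuous; a diagonal Arzel\`a--Ascoli argument produces a limit $f_\infty \in C^\infty$ that still vanishes on $K$, equals one outside $K_{3\delta}$, and, by (vii), satisfies
\[
\Lip(f_\infty^{(j)}) \,\le\, \frac{40}{\delta^2\, d}\, (\delta c_\eta)^{-(j-1)}\, ((j-1)!)^{1+\eta}
\quad\mbox{for all } j\ge 1.
\]
Choosing $\delta$ small enough that Theorem~\ref{thm:volume-rad} or \ref{thm:volume} applies to $K_{3\delta}$, and using the scaling $C_d^\infty(aL) \subset a\, C_d^\infty(L)$ as in Proposition~\ref{prop:lb_lip}, the factorial-type hypothesis $L_{j,d}\, d > c (j!)^{1+\eta}$ permits a single scalar $a$ to be chosen so that $a f_\infty|_{D_d} \in C_d^\infty(L)$ for infinitely many $d$, while $\int_{D_d} a f_\infty \ge a/2$ whenever $n$ is sub-exponential in $d$. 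This yields the curse.

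For quasi-polynomial tractability in part~(iii), I would apply the Taylor approximation of Proposition~\ref{prop:ub_higher} at a center $x_d^* \in D_d$ for which $D_d \subset B_{R_0}^d(x_d^*)$; both geometries allow this with $R_0 = 1/2$ (cube) or $R_0 < \sqrt{2/(\pi \eu)}$ (small radius), and in either case $R_0(2-\delta) < 1$ for any $\delta>0$. Using the hypothesis $L_{k,d} \le c (2-\delta)^k k!\, d^{-(k+1)/2}$, the order-$k$ Taylor error at any $x \in D_d$ is
\[
|f(x) - T_k(x)| \,\le\, \frac{R_0^{k+1}}{k!}\, L_{k,d}\, d^{(k+1)/2} \,\le\, c R_0\,\bigl(R_0(2-\delta)\bigr)^k,
\]
which is at most $\eps$ as soon as $k = O(1 + \ln(1/\eps))$. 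The integral of $T_k$ can be evaluated to arbitrary precision via divided differences using at most $\binom{d+k}{k} \le (\eu d)^k$ function values, as in Proposition~\ref{prop:ub_higher}, so
\[
\ln n(\eps, C_d^\infty(L)) \,\le\, k\,(1 + \ln d) \,=\, O\bigl((1-\ln\eps)(1+\ln d)\bigr),
\]
which is the desired quasi-polynomial bound.

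The main obstacle is part~(i): the bound $\Lip((af_\infty)^{(j)}) \le L_{j,d}$ must hold \emph{simultaneously} for every $j$ with a single scaling constant $a$. This pits the compound growth $(\delta c_\eta)^{-(j-1)} ((j-1)!)^{1+\eta}$ from the convolution cascade against the hypothesized $(j!)^{1+\eta}$; after extracting the factor $j^{1+\eta}$, the ratio $L^*_{j,d}/L_{j,d}$ stays bounded in $j$ only if $\delta c_\eta \ge 1$. Because $c_\eta = 1/\zeta(1+\eta) < 1$ for every $\eta > 0$, this pushes $\delta$ above $1$, which conflicts with the small-$\delta$ requirement of Section~\ref{subsec:Convex Hull}. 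Reconciling these constraints---by absorbing the resulting geometric constants into $c$, replacing the initial $f$ by a function with sharper Lipschitz constants, or convolving instead with a Gevrey-class mollifier adapted to $K$---is the delicate technical point of the proof.
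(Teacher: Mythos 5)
Your handling of the second and third assertions is correct and essentially the paper's own: the no-curse part is Proposition~\ref{prop:ub_infinite} (i.e.\ Proposition~\ref{prop:ub_higher} applied with $k=\infty$, legitimate because cubes and small-radius sets satisfy $(\Pbf)$), and the quasi-polynomial bound is exactly the Taylor argument of Proposition~\ref{prop:ub_infinity2} with $R_d\le 1/2$ and $a=2/(2-\delta)$. The genuine gap is in part (i), and you flag it yourself: after constructing $f_\infty=f\ast G_\infty$ with $\Lip(f_\infty^{(j)})\le \frac{40}{\delta^2 d}\,(\delta c_\eta)^{-(j-1)}\bigl((j-1)!\bigr)^{1+\eta}$, you cannot place a fixed multiple of $f_\infty$ in the class, because the ratio against $c\,(j!)^{1+\eta}/d$ still contains the exponentially growing factor $(\delta c_\eta)^{-(j-1)}$, and you leave the resolution open, suggesting one might need $\delta c_\eta\ge 1$ (impossible, since the volume estimates of Section~\ref{subsec:Convex Hull} force $\delta$ to be small) or a different mollifier. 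So the central claim of the theorem, the curse itself, is not established in your proposal.

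The way to close it is not to enlarge $\delta$ but to decouple the exponent used in the convolution cascade from the exponent in the hypothesis: run the construction of Theorem~\ref{thm:conv} with $\alpha_j=c_{\eta'}\,j^{-1-\eta'}$ for some $0<\eta'<\eta$. Then
\[
\frac{(\delta c_{\eta'})^{-(j-1)}\bigl((j-1)!\bigr)^{1+\eta'}}{(j!)^{1+\eta}}
\,=\,(\delta c_{\eta'})^{-(j-1)}\;j^{-(1+\eta)}\;(j!)^{-(\eta-\eta')}\;\longrightarrow\;0 ,
\]
since $(j!)^{\eta-\eta'}$ dominates any geometric growth; hence $\sup_{j\ge1} L^*_{j,d}\,d\,/\bigl(c\,(j!)^{1+\eta}\bigr)$ is a finite constant independent of $d$, and a single scaling factor $a$ (chosen also to cover $j=0$ along a subsequence where $L_{0,d}\sqrt{d}$ stays bounded away from zero) gives $f_\infty|_{D_d}\in\widebar{C}_d^\infty(aL)$. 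The fixed-constant scaling $n(\eps,C_d^\infty(L))\ge n(a\eps,C_d^\infty(aL))$ remains available for $k=\infty$; what the paper means by saying scaling ``cannot be used'' is only that no bounded $a$ can absorb a factor growing factorially in $j$, which is why the $(j!)^{1+\eta}$ must remain in the hypothesis. You deserve credit for noticing that, read with the same $\eta$ on both sides, the constructed function does not land in the class---the paper's own write-up is terse on precisely this point---but your proposal stops short of resolving it, so part (i) remains unproved as written.
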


\subsection{Lower Bounds}    \label{subsec:Lower Bounds 4}

To prove lower bounds for the classes $C_d^\infty(L)$ we 
basically 
use the same fooling functions 
$(f_k)$ as in Section~\ref{subsec:Lower Bounds 3}, but with a 
different sequence $(\alpha_j)$. 
Moreover, we need to
take the limit for $k\to\infty$. 
For this reason, we first study the convolution of infinitely many 
indicator functions $g_j$. The resulting function in the one-dimensional case is reminiscent 
of the up-function of Rvachev, see \cite{Rv90}.

Recall the definition \eqref{eq:gk} of the $L_1$-normalized indicator
functions $g_j$ of the ball of radius $\alpha_j \delta \sqrt{d}$
for $j\in\natural$.
Now we define 
$$ 
 G_k = g_1 \ast \ldots \ast g_k.
$$  
Observe that $G_2$ is Lipschitz and $G_3 \in C^1(\R^d)$.
Theorem \ref{thm:conv} implies that $G_k$ is also Lipschitz
for $k\ge 2$ with $\Lip(G_k)\le \Lip(G_2)$ and, more generally, $G_{k}\in C^{\ell}(\R^d)$ 
for $k > \ell +1 \ge 1$ with 
$$
 \Lip(G_k^{(\ell)}) \le \Lip(G_{\ell+2}^{(\ell)}).               
$$ 
This implies that the limit function
$$
G_\infty = \lim_{k\to \infty} G_k
$$
exists, the convergence is uniform, $G_\infty \in C^\infty(\R^d)$ and
$$
 G^{(\ell)}_\infty = \lim_{k\to \infty} G^{(\ell)}_k
$$
uniformly for all $\ell\ge 0$.
Indeed, fix a direction $\theta\in\sphere^{d-1}$ and let $L_\ell$ 
be the operator
of $\ell$-times differentiation in direction $\theta$. Then
$$
 L_\ell G_n = L_\ell (G_{\ell+3}) \ast g_{\ell+4} \dots \ast g_{n}
$$
for $n\ge \ell+3$ and the $L_1$-normalization of $g_{n+1}$ imply
\begin{eqnarray*}
|L_\ell G_{n+1}(x) - L_\ell G_n(x) | &=&
\left| \int_{\R^d}   \big(L_\ell G_n(x-y) - L_\ell G_n(x) \big) 
\, g_{n+1}(y)  {\rm d} y  \right|  \\
&\le& \int_{B_{n+1}}   \big|L_\ell G_n(x-y) 
- L_\ell G_n(x) \big| \, g_{n+1}(y)  {\rm d} y \\
&\le& \Lip(L_\ell G_n) \alpha_{n+1} \delta \sqrt{d} 
\le \Lip(G_{\ell+2}^{(\ell)})  \delta \sqrt{d}\, \alpha_{n+1}. \\
\end{eqnarray*} 
This leads to
$$
|L_\ell G_{n+m}(x) - L_\ell G_n(x) | 
\le \Lip(G_{\ell+2}^{(\ell)})  \delta \sqrt{d} \sum_{k=n+1}^\infty \alpha_{k} 
$$
for all $m\ge 1$. 
Now the summability of the sequence $(\alpha_j)$ shows that  $ \big(L_\ell G_n\big)$ is a
uniform Cauchy sequence proving the claim.

Now we can define our final fooling function 
$$ 
 f_\infty = f \ast G_\infty
$$
using again the initial function $f$ constructed in Section \ref{subsec:Lower Bounds 2}.
Then the functions 
$$ 
 f_k = f \ast G_k
$$
converge uniformly to $f_\infty$. 
We also have uniform convergence of the corresponding derivatives.
By induction, we obtain the following properties 
from Theorem \ref{thm:conv}:
\begin{itemize}
        \item $f_\infty\in C^{\infty}$, 
        \item $f_\infty(x)=0$ for $x\in K$,
        \item $f_\infty(x)=1$ for $x\notin K_{3\delta}$,
        \item $\Lip(f_\infty) \le \frac{2}{\delta \sqrt{d}}$ and 
        \item $L^*_{j,d} := \Lip(f_\infty^{(j)}) \le \frac{40}{\delta^2 d} 
                \biggl(\prod_{i=1}^{j-1} \frac{1}{\delta \alpha_i}\biggr) $, 
                                $j=1,\dots,\ell+1$.
\end{itemize}
Using the sequence $\alpha_j=c_\eta j^{-1-\eta}$ for $\eta>0$ 
with $c_\eta=\zeta(1+\eta)^{-1}$
the last estimate yields 
\begin{itemize}
\item $L^*_{j,d} := \Lip(f_\infty^{(j)}) 
\le \frac{40}{d} \delta^{-1-j} c_\eta^{1-j} \big((j-1)!\big)^{1+\eta} $, 
                                $j=1,\dots,\ell+1$.
\end{itemize}

By the third property of $f_\ell$ we additionally obtain
\[
\int_{D_d} f_\infty(x) \,\dint x 
\,\ge\, \int_{D_d\setminus K_{3\delta}} f_\ell(x) \,\dint x 
\,=\, 1- \lambda_d(K_{3\delta}\cap D_d).
\]
The proof of the lower bound in Theorem \ref{thm5} is then finished exactly as in the
case of finite smoothness $k$ in Subsection \ref{subsec:Lower Bounds 3}.
Note that in this case, we cannot use the argument of scaling
for the class $C^\infty_d(L)$. 
Thus, the $(j!)^{1+\eta}$ remains in the asymptotic lower bound 
for $(L_{j,d})$. 

\subsection{Upper Bounds}    \label{subsec:Upper Bounds 4}

We prove upper bounds on the information complexity for the classes
$C_d^\infty(L)$ of infinitely differentiable functions.
First of all, note that Proposition~\ref{prop:ub_higher} 
holds unchanged for $k=\infty$. This is summarized
in the following proposition.

\begin{prop} \label{prop:ub_infinite}
Let $(D_d)_{d\in\N}$ be a sequence of convex sets that satisfies $(\Pbf)$.
Additionally assume that there exists $j\in\N$ such that
\[
\lim_{d\to\infty}L_{j,d}\,d^{\frac{j+1}{2}}=0.
\]
Then the information complexity for the classes $C_d^\infty(L)$ satisfies 
\[
n(\eps,C_d^\infty(L)) 
 \,\le \, {\rm e}^j \, d^{j}
\]
for all $\eps\in(0,1)$ if $d\ge d(\eps)$ is large enough.
\end{prop}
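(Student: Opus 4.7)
The plan is to reduce the $C^\infty$ case to the finite smoothness case that was already handled in Proposition \ref{prop:ub_higher}. The key observation is the trivial inclusion
\[
C_d^\infty(L) \,\subset\, C_d^k(L) \quad \text{for every } k\in\N,
\]
which follows directly from comparing the definitions of the two classes: a function lying in $C_d^\infty(L)$ is $C^\infty$ and satisfies $\Lip(f^{(i)}) \le L_{i,d}$ for all $i\in\N_0$, so in particular it lies in $C_d^k(L)$. Consequently $n(\eps,C_d^\infty(L)) \le n(\eps,C_d^k(L))$ for every $k$.

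With this reduction, I would split according to the value of the index $j$ provided by the hypothesis. If $j\ge 2$, I simply apply Proposition \ref{prop:ub_higher} with the same $j$ and any $k\ge j$ (say $k=j$): the hypothesis $\lim_{d\to\infty} L_{j,d}\,d^{(j+1)/2}=0$ is exactly what Proposition \ref{prop:ub_higher} requires, and it yields $n(\eps,C_d^k(L))\le {\rm e}^j d^j$ for all $\e\in(0,1)$ and sufficiently large $d$, hence the same bound for $C_d^\infty(L)$. If $j=1$, the hypothesis reads $\lim_{d\to\infty}L_{1,d}\,d=0$, which is precisely the assumption of Proposition \ref{prop:ub_lipgrad_P}; since $C_d^\infty(L)\subset C_d^1(L)$, a single point formula suffices, giving $n(\eps,C_d^\infty(L))=1\le {\rm e}\cdot d={\rm e}^j d^j$ for large $d$.

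There is essentially nothing new to prove: the core technical work, namely the Taylor expansion at the centroid of $D_d\cap B_R^d(x_d^*)$ with error controlled by $\tfrac{R^{j+1}}{j!}L_{j,d} d^{(j+1)/2}$, together with the divided-differences approximation of the $\binom{d+j}{j}\le {\rm e}^j d^j$ partial derivatives of order $\le j$, was already carried out in the proof of Proposition \ref{prop:ub_higher} and transfers verbatim via the inclusion. The only subtlety worth flagging is that one should point out that the $k=\infty$ setting really does not require any separate argument once the inclusion is noted; no truncation of the Taylor series or limit argument in $k$ is needed, because we never invoke smoothness of order higher than $j$. Hence I do not anticipate a significant obstacle; the proof is a one-line reduction.
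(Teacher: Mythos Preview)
Your proposal is correct and matches the paper's approach: the paper simply remarks that Proposition~\ref{prop:ub_higher} holds unchanged for $k=\infty$, which is precisely your reduction via the inclusion $C_d^\infty(L)\subset C_d^k(L)$. Your explicit treatment of the case $j=1$ via Proposition~\ref{prop:ub_lipgrad_P} is in fact slightly more careful than the paper, which tacitly refers only to Proposition~\ref{prop:ub_higher} (where $j\ge 2$).
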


Again, this result only shows that, under the given assumptions, 
the curse of dimensionality does not hold for $C_d^\infty(L)$.
The next
proposition improves this result in the sense that we 
obtain \emph{quasi-polynomial tractability}, that is 
\[
\ln\bigl(n(\eps,C_d^\infty(L))\bigr) \,\le\, C (1-\ln\eps)(1+\ln d)
\ \ \ \mbox{for all}\ \ \eps\in(0,1),\ d\in\natural,
\]
for some absolute $C<\infty$, if the asymptotic conditions in 
the last proposition are replaced by uniform bounds. 
We prove two different variants of this
result which differ by 
the power of the $j!$ in the required bounds.

\begin{prop} \label{prop:ub_infinity2}
Let $(D_d)_{d\in\N}$ be a sequence of convex sets with $\lambda(D_d)=1$ and define 
$R_d = \rad(D_d)/\sqrt{d}$.
Additionally assume that there exist $a>1$ and $c>0$ such that
\[
L_{j,d} \,\le\, c\, a^{-j}\, j!\,  R_d^{-j-1} \,d^{-\frac{j+1}{2}} 
\]
for all $j,d\in\N$.
Then the problem of numerical integration for the 
classes $C_d^\infty(L)$ is 
quasi-polynomially tractable.
\end{prop}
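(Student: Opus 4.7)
The plan is to adapt the Taylor-polynomial algorithm from the proof of Proposition~\ref{prop:ub_higher} and to choose the order $j$ of the expansion so that the approximation error and the information cost are in balance. The first ingredient is a geometric observation: for every convex $D_d$ with $\lambda_d(D_d)=1$ there exists a point $z\in D_d$ with
\[
\sup_{x\in D_d}\|x-z\|_2 \,\le\, \rad(D_d) \,=\, R_d\sqrt{d}.
\]
Indeed, if $c\in\R^d$ is a circumcenter realizing the infimum in the definition of $\rad(D_d)$ and $z$ is the metric projection of $c$ onto $\overline{D_d}$, then the variational inequality $\langle c-z,\,x-z\rangle\le0$ for all $x\in\overline{D_d}$ yields $\|c-x\|_2^2 \ge \|c-z\|_2^2+\|z-x\|_2^2$, which gives the stated bound. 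If $z$ sits on $\partial D_d$, I would perturb it slightly into the open set $D_d$ and absorb the vanishing error into the constants. Using this $z$ rather than, say, the centroid is crucial: the latter would cost an extra factor $2^{j+1}$ and would close the argument only for $a>2$.

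Let $T_j$ be the Taylor polynomial of $f$ at $z$ of order $j$ and define the linear functional $Q_{j,d}(f) = \int_{D_d}T_j(x)\,\dint x$. The Lagrange remainder together with $\|f^{(j+1)}\|_\infty = \Lip(f^{(j)}) \le L_{j,d}$ gives
\[
\sup_{x\in D_d}|f(x)-T_j(x)| \,\le\, \frac{(R_d\sqrt{d})^{j+1}}{(j+1)!}\,L_{j,d}
\,\le\, \frac{c}{(j+1)\,a^{j}}
\]
after inserting the hypothesis on $L_{j,d}$. Hence $|S_d(f)-Q_{j,d}(f)| \le c\,a^{-j}$, which is at most $\eps$ as soon as $j \ge \log_a(c/\eps)$. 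Since $a>1$ is fixed, this choice of $j$ grows only logarithmically in $1/\eps$ and, crucially, is \emph{independent of $d$}.

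It then remains to turn $Q_{j,d}$ into an admissible algorithm using function values only and to count those values. As in the proof of Proposition~\ref{prop:ub_higher}, each partial derivative $D^\beta f(z)$ with $|\beta|\le j$ can be approximated to arbitrary precision by divided differences at $O_j(1)$ sample points of $D_d$ near $z$, while the integrals $\int_{D_d}(x-z)^\beta\,\dint x$ are $f$-independent and can be precomputed. The number of function values is therefore bounded by $c_j\binom{d+j}{j} \le c_j(\eu(d+j)/j)^{j}$, where $c_j$ depends only on $j$. Substituting $j = O(1+\ln(1/\eps))$ and taking logarithms produces
\[
\ln n(\eps,C_d^\infty(L)) \,\le\, C\,(1-\ln\eps)(1+\ln d)
\]
for an absolute $C$, as claimed. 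The step I expect to require the most care is the rigorous handling of the projection point $z$ when it lies on $\partial D_d$: since $D_d$ is open, one must either perturb $z$ into the interior while preserving the radius estimate essentially sharply, or set up a limiting argument that transfers the Taylor bound from $\overline{D_d}$ to $D_d$ without degrading the geometric factor that makes $a>1$ sufficient.
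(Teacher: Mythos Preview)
Your proposal is correct and follows essentially the same approach as the paper: expand in a Taylor polynomial of order $j\approx\log_a(c/\eps)$ at a point realizing (approximately) the circumradius, bound the remainder by $c\,a^{-j}$ via the hypothesis on $L_{j,d}$, and then approximate the needed partial derivatives by divided differences at $\binom{d+j}{j}$ points as in the proof of Proposition~\ref{prop:ub_higher}. In fact you are more careful than the paper on one point: the paper simply writes the bound $|f(x)-T_j(x)|\le \frac{R_d^{j+1}}{j!}L_{j,d}\,d^{(j+1)/2}$ for all $x\in D_d$ without saying where the expansion point sits, whereas your projection-of-the-circumcenter argument explicitly furnishes a point $z\in\overline{D_d}$ with $\sup_{x\in D_d}\|x-z\|_2\le\rad(D_d)$ and flags the boundary/perturbation issue.
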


This proves the respective part of Theorem~\ref{thm5} 
since the cube and convex sets of 
small radius satisfy $R_d\le 1/2$, 
and then $R_d/a\ge2/a=2-\delta$ for some positive $\delta$.

For $D_d=(0,1)^d$ a similar result is contained in \cite{Vyb13}.
The paper~\cite{Vyb13} also studies
other classes of $C^\infty$ functions defined on the ball 
and upper bounds are obtained using Taylor approximations.

\begin{proof}
The proof is completely analogous to the proof of 
Proposition~\ref{prop:ub_higher}. Recall the definitions from there.
In particular, we obtain in the same fashion 
for all $j\in\N$ and $x\in D_d$ that
\begin{equation} \label{eq:error_taylor}
 \left| f(x)-T_{j}(x) \right|  
\,\le\, \frac{R_d^{j+1}}{j!} \, L_{j,d} \, d^{\frac{j+1}{2}}
\,\le\, c\, a^{-j}
\end{equation}
which is smaller than $\eps>0$ if 
$j\ge k_\eps := \lceil\log_a(\frac{c}\eps)\rceil$. 
Thus,
\[
\left|\int_{D_d}f(x)\,\dint x\ - \ Q_{k_\eps,d}(f)\right| 
\,\le\, \eps.
\]

Again, it remains to bound the cost of $Q_{k_\eps,d}$.
Exactly as in the proof of Proposition~\ref{prop:ub_higher}, 
see also \cite{Vyb13}, 
we can compute $Q_{k_\eps,d}(f)$ for $f\in C_d^\infty(L)$, 
up to an arbitrary error $\eta>0$ using 
$$
\binom{d+k_\eps}{k_\eps}\le\Biggl(\frac{{\rm e}(d+k_\eps)}{k_\eps}\Biggr)^{k_\eps} 
\le {\rm e}^{k_\eps} d^{k_\eps}
$$
function values. 
We conclude
\[
\ln\bigl(n(\eps,C_d^\infty(L))\bigr) 
\,\le\, k_\eps (1+\ln d)
\,\le\, (1+\ln c) (1+1/\ln a) (1-\ln\eps)(1+\ln d).
\]
The proposition follows.
\end{proof}

The last proposition of this section deals with the case where the (uniform) 
upper bounds on $(L_{j,d})$ have a smaller power of $j!$.
This allows us
to conclude weak tractability for sequences $(D_d)$ with 
slightly larger radii, i.e.,~$\rad(D_d)\prec d^{\,1-\eps}$, $\eps>0$, 
while the order of $d$ in $(L_{j,d})$ remains the same.
Recall that numerical 
integration is called \emph{weakly tractable} for 
$C^k_d(L)$ iff
$$
\lim_{d+\eps^{-1}\to \infty}\frac{\ln\, n(\eps,C^k_d(L))}
{d+\eps^{-1}}=0.
$$
This means that $n(\eps,C^k_d(L))$ is not exponential
in $d$ and in $\eps^{-1}$ but can be exponential in $d^{\,\alpha}$ or 
$\eps^{-\alpha}$ for $\alpha\in(0,1)$. 
Obviously, the curse of dimensionality implies
that weak tractability does not hold. However, the converse statement
is not necessarily true, see e.g., \cite{NW08,NW10} for more details.

\begin{prop} \label{prop:ub_infinity3}
Let $(D_d)_{d\in\N}$ be a sequence of convex sets with $\lambda(D_d)=1$ and define 
$R_d = \rad(D_d)/\sqrt{d}$.
Additionally assume that 
\[
L_{j,d} \,\le\, c\, a^{-j} (j!)^{1-\eta}\, \,d^{-\frac{j+1}{2}} 
\]
for all $j,d\in\N$ and some $a,c,\eta>0$. 
Then the information complexity for the classes $C_d^\infty(L)$ satisfies 
\[
\ln\bigl(n(\eps,C_d^\infty(L)) \bigr)
 \,\le\,  \Bigl(1+\eu^{1+1/\eta}(R_d/a)^{2/\eta} \Bigr) 
                        \Bigl(1+\ln\frac{c}{\eps}\Bigr) \bigl(1+\ln d \bigr)
\]
for all $\eps\in(0,1)$.

In particular, the problem of numerical integration for the 
classes $C_d^\infty(L)$ is 
quasi-polynomially tractable for
$R_d\le R<\infty$, and 
weakly tractable for $R_d\le R d^{1/2-\delta}$, $R<\infty$, $\delta>0$, 
and $\eta>1-2\delta$.
\end{prop}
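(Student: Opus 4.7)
My plan is to mimic the approach of Proposition~\ref{prop:ub_infinity2}: approximate $f \in C_d^\infty(L)$ by a Taylor polynomial $T_k$ of degree $k$ about a suitable base point $x_d^* \in D_d$ (the centroid, say), and then approximate $\int_{D_d} T_k(x)\,\dint x$ by divided differences of partial derivatives from function values. The only new ingredient is the optimal choice of the truncation degree $k_\eps$ to balance the Taylor truncation error against the sampling cost, given the weaker growth of $L_{j,d}$ allowed here.

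Following the Taylor calculation of Proposition~\ref{prop:ub_infinity2} verbatim and plugging in the present assumption on $L_{j,d}$, one obtains for every $j \in \N$ and every $x \in D_d$
\[
|f(x) - T_j(x)| \,\le\, \frac{R_d^{j+1}}{j!}\,L_{j,d}\,d^{(j+1)/2}
\,\le\, c\, R_d \left(\frac{R_d}{a}\right)^j (j!)^{-\eta}.
\]
Exactly as in Proposition~\ref{prop:ub_higher}, approximating $\int_{D_d} T_{k_\eps}(x)\,\dint x$ from function values costs at most $\binom{d+k_\eps}{k_\eps} \le (\eu(d+k_\eps)/k_\eps)^{k_\eps}$ evaluations, so that $\ln n(\eps, C_d^\infty(L)) \le k_\eps(1+\ln d)$.

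The crux, and the main obstacle, is to show that the choice
\[
k_\eps \,=\, \Bigl(1 + \eu^{1+1/\eta}(R_d/a)^{2/\eta}\Bigr)(1 + \ln(c/\eps))
\]
makes the error bound at most $\eps$. Taking logarithms and using Stirling's inequality $j! \ge (j/\eu)^j$, the required inequality becomes
\[
j\bigl(\eta \ln j - \eta - \ln(R_d/a)\bigr) \,\ge\, \ln(c R_d/\eps).
\]
When $R_d/a \ge 1$, the first factor of $k_\eps$ alone guarantees $\eta \ln k_\eps \ge \eta + 1 + 2\ln(R_d/a)$ and hence $\eta \ln k_\eps - \eta - \ln(R_d/a) \ge 1 + \ln(R_d/a)$; multiplying by the second factor then yields the required inequality. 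The case $R_d/a < 1$ is easier, since $(R_d/a)^j$ already decays geometrically in $j$. This elementary calculation is fiddly only because one must pin down the constant $\eu^{1+1/\eta}$.

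Given the resulting bound on $\ln n(\eps, C_d^\infty(L))$, the two tractability conclusions follow by inspection. For $R_d \le R < \infty$ the factor $(R_d/a)^{2/\eta}$ is bounded in $d$, so the bound is of the form $C(1-\ln\eps)(1+\ln d)$, which is quasi-polynomial tractability. For $R_d \le R\, d^{1/2-\delta}$ one has $(R_d/a)^{2/\eta} \le C'\, d^{(1-2\delta)/\eta}$ with exponent $(1-2\delta)/\eta < 1$ whenever $\eta > 1-2\delta$; multiplying by $(1+\ln d)(1-\ln\eps)$ then gives a bound on $\ln n(\eps, C_d^\infty(L))$ of order $d^{\alpha}(1-\ln\eps)(1+\ln d)$ with $\alpha < 1$, which is $o(d+\eps^{-1})$, i.e.\ weak tractability.
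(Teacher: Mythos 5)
Your overall route is exactly the one the paper intends (it omits the proof as ``analogous'' to Propositions~\ref{prop:ub_higher} and~\ref{prop:ub_infinity2}): Taylor expansion at a center of $D_d$, the bound $|f(x)-T_j(x)|\le \frac{R_d^{j+1}}{j!}L_{j,d}d^{(j+1)/2}\le c\,R_d\,(R_d/a)^j (j!)^{-\eta}$, and the cost count $\ln n\le k_\eps(1+\ln d)$ via divided differences. Two remarks, one minor and one substantive. Minor: the base point must be a point within distance $\rad(D_d)$ of all of $D_d$ (a Chebyshev center), not the centroid; the centroid only guarantees $\Vert x-x_d^*\Vert_2\le\diam(D_d)\le 2\rad(D_d)$ and would cost a factor $2^{j}$, changing the constant (unlike in Proposition~\ref{prop:ub_lipgrad_convex}, nothing here requires the linear term to integrate to zero).

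The substantive gap is in your verification that $k_\eps=\bigl(1+\eu^{1+1/\eta}(R_d/a)^{2/\eta}\bigr)\bigl(1+\ln(c/\eps)\bigr)$ forces the error below $\eps$. Your sufficient condition is $k\bigl(\eta\ln k-\eta-\ln(R_d/a)\bigr)\ge\ln(cR_d/\eps)$, whose right-hand side contains the \emph{standalone} term $\ln R_d=\ln(R_d/a)+\ln a$, whereas your chain only produces a lower bound of the form $k_\eps\bigl(1+\ln(R_d/a)\bigr)\ge\bigl(1+\eu^{1+1/\eta}(R_d/a)^{2/\eta}\bigr)\bigl(1+\ln(c/\eps)\bigr)\bigl(1+\ln(R_d/a)\bigr)$: nothing in it controls $\ln a$. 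Concretely, take $\eta=1$, $c=\eps$, $R_d=a=\eu^{100}$ (a long thin convex body of volume one, perfectly admissible): then $k_\eps=1+\eu^{2}\approx 8.4$, but the truncation error at degree $8$ is $\eps\,\eu^{100}/8!\gg\eps$, so it is not only your chain of inequalities but the step itself (``error $\le\eps$ at degree $k_\eps$'') that fails; the needed degree is of order $\ln R_d/\ln\ln R_d$. The same problem defeats the claim that ``the case $R_d/a<1$ is easier'' when $R_d$ is huge and $a$ only slightly larger. What your argument actually proves is the displayed bound with $1+\ln\frac{c}{\eps}$ replaced by $1+\ln\frac{cR_d}{\eps}$ (or, equivalently, the stated bound after first replacing $a$ by $\min\{a,\eu\}$, which is legitimate since decreasing $a$ weakens the hypothesis); either version still yields the quasi-polynomial and weak tractability conclusions, since there $a,c,\eta$ are fixed and the extra $\ln R_d$ is $O(\ln d)$. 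So either supply an additional assumption controlling $\ln a$ by $\eu^{1+1/\eta}(R_d/a)^{2/\eta}$, or prove the corrected bound and deduce the ``in particular'' statements from it; as written, the crucial inequality does not follow from the argument you sketch.
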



Since the proof is again analogous to the proof of 
Proposition~\ref{prop:ub_higher} and 
the proof of 
Proposition~\ref{prop:ub_infinity2}, we omit the details.

\subsection{The case of $L_{j,d}=1$}
The same proof technique can be used to obtain weak tractability 
for the unit ball of our function classes $C_d^\infty$ with $L_{j,d}=1$
if the radii of the sets $D_d$ are not too large. 
That is, for the classes
\[
C_d^{\infty}(1) \,=\,\bigl\{f\in C^{\infty}(D_d)\ \ \big|\ \ \ \|f^{(k)}\| \le1,\ 
        \ \mbox{for all } k\in\N\, \bigr\}. 
\]

\begin{cor} \label{cor:ub_infinity_unit_ball}
Let $(D_d)_{d\in\N}$ be a sequence of convex sets with $\lambda(D_d)=1$.
Then the information complexity for the classes $C_d^\infty(1)$ defined above satisfies 
\[
\ln\bigl(n(\eps,C_d^\infty(1)) \bigr)
 \,\le\,  (1+\ln d) \max\Bigl\{\eu^2\, \rad(D_d),\, \ln\bigl(\eps^{-1}\, \rad(D_d)\bigr)\Bigr\}
\]
for all $\eps\in(0,1)$ and $d\in\N$.

Hence, weak tractability holds if
$$
\lim_{d\to\infty}\frac{(1+\ln\,d)\,\rad(D_d)}{d}=0.
$$
In particular, weak tractability holds for the unit cubes $D_d=(0,1)^d$
and $\ell^d_p$-balls with $p\in(1,\infty]$.
\end{cor}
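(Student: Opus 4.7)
The plan is to adapt the Taylor-polynomial algorithm of Propositions~\ref{prop:ub_infinity2} and~\ref{prop:ub_infinity3} to the degenerate case $L_{j,d}\equiv 1$, where all decay in the Taylor remainder must come from $(k+1)!$ alone. First, using the convexity of $D_d$, choose $x^*\in D_d$ with $\sup_{x\in D_d}\|x-x^*\|_2\le R:=\rad(D_d)$; this is the Chebyshev center, up to an arbitrarily small perturbation to ensure $x^*$ lies in the open set, which only affects constants. Let $T_k$ be the Taylor polynomial of $f$ of order $k$ at $x^*$. Since $\|f^{(k+1)}\|\le 1$, the standard remainder estimate gives
$$
|f(x)-T_k(x)|\;\le\;\frac{\|x-x^*\|_2^{k+1}}{(k+1)!}\;\le\;\frac{R^{k+1}}{(k+1)!}
$$
uniformly in $x\in D_d$.

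Next, Stirling's inequality $(k+1)!\ge((k+1)/\eu)^{k+1}$ shows that $k+1\ge \eu^2 R$ implies $R^{k+1}/(k+1)!\le \eu^{-(k+1)}$, so the choice
$$
k\;=\;\max\bigl\{\lceil \eu^2 R\rceil,\,\lceil \ln(\eps^{-1}R)\rceil\bigr\}
$$
forces $\|f-T_k\|_\infty\le\eps$ on $D_d$ and hence $|\int_{D_d}(f-T_k)\,\dint x|\le\eps$. Then, as in the proof of Proposition~\ref{prop:ub_infinity2}, $\int_{D_d}T_k$ can be approximated by computing each partial derivative $D^\beta f(x^*)$ with $|\beta|\le k$ via divided differences; because every derivative of $f$ is bounded by one, each such approximation requires only a number of function values that depends on $|\beta|$ but not on $d$. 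The total sample count is a constant times $\binom{d+k}{k}\le(\eu(d+k)/k)^k$, so $\ln n(\eps,C_d^\infty(1))\le k(1+\ln d)$ in the regime $k\le d$ that dominates the analysis. Substituting the choice of $k$ yields the advertised bound.

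For the weak tractability consequence, the hypothesis $(1+\ln d)\rad(D_d)/d\to 0$ forces the $\eu^2 R$-branch to contribute at most $\eu^2(1+\ln d)R=o(d)$, and the logarithmic branch to contribute $(1+\ln d)\ln(\eps^{-1}R)=o(d+\eps^{-1})$ since $\ln(\eps^{-1}R)$ is sublinear in $\eps^{-1}$ and $R$ is at most polynomial in $d$. Hence $\ln n/(d+\eps^{-1})\to 0$ as $d+\eps^{-1}\to\infty$. The stated examples follow from~\eqref{eq:radlp}: the cube has $R=\sqrt{d}/2$; the $\ell_p^d$-ball of volume one has $R=\Theta(\sqrt{d})$ for $p\ge 2$ and $R=\Theta(d^{1/p})$ with $1/p<1$ for $1<p<2$, and in every such case $(1+\ln d)R/d\to 0$. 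The only mild obstacle is verifying the combinatorial bound $\ln\binom{d+k}{k}\le k(1+\ln d)$ when $k$ and $d$ are comparable, which is a short case analysis starting from $\binom{d+k}{k}\le(\eu(d+k)/k)^k$.
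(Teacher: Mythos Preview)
Your proposal is correct and follows essentially the same route as the paper: a Taylor expansion at the Chebyshev center, the remainder bound $R^{j+1}/j!$ (you use the equivalent $R^{k+1}/(k+1)!$ via $\|f^{(k+1)}\|\le 1$), Stirling to choose $k\approx\max\{\eu^2 R,\ln(\eps^{-1}R)\}$, and the count $\binom{d+k}{k}\le \eu^k d^k$ from Proposition~\ref{prop:ub_infinity2}. Your weak-tractability verification and the $\ell_p^d$ radius computation are slightly more detailed than the paper's, but the argument is the same.
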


\begin{proof}
Note that we know from \eqref{eq:error_taylor} 
and from the assumed condition $L_{j,d}=1$ 
that
for all $x\in D_d$ we have 
\[
\left| f(x)-T_{j}(x) \right|  
\,\le\, \frac{\rad(D_d)^{j+1}}{j!}
\,\le\, \rad(D_d)\,\left(\frac{\rad(D_d)\,\eu}{j}\right)^j,
\]
where the last estimate holds due to Stirling's formula.  
The right hand side of this inequality is clearly smaller than $\eps$ 
if 
$$
j\ge k_\e:=\left\lceil \max\bigl\{\eu^2\, \rad(D_d),\, \ln(\eps^{-1}\, \rad(D_d))\bigr\}\right\rceil.
$$ 
Then to get the bound on $\ln\bigl(n(\eps,C_d^\infty(1)) \bigr)$ we 
proceed as the proof of Proposition~\ref{prop:ub_infinity2}.
The rest is easy since the radius of the unit cube $[0,1]^d$
is $\tfrac12\sqrt{d}$ and the radius of $\ell^d_p$-ball is $o(d)$,
which follows from \eqref{eq:radlp} and Stirling's approximation. 
\end{proof}

It is interesting to notice that the last result is too weak to establish
weak tractability for the corresponding class $\tilde C_d^\infty(1)$ 
of all partial derivatives bounded by one. Indeed, using~\eqref{dirpar} with $k=\infty$ 
we need to consider $C^\infty_d(L)$ with $L_{j,d}= d^{j/2}$. Then~\eqref{eq:error_taylor} 
implies that for all $x\in D_d$ we have 
$$
\left| f(x)-T_{j}(x) \right|  
\,\le\, \frac{\rad(D_d)^{j+1}\,d^{j/2}}{j!}
\,\le\, \rad(D_d)\,\left(\frac{\rad(D_d)\,\eu\,\sqrt{d}}{j}\right)^j.
$$
Then the integration error is at most $\e$ if 
$$
j\ge k_\e:=\left\lceil
\max\bigl\{\eu^2\, \rad(D_d)\,\sqrt{d},\, \ln(\eps^{-1}\, \rad(D_d))\bigr\}\right\rceil.
$$
This corresponds to the estimate
$$
\ln\bigl(n(\eps,\tilde C_d^\infty(1)) \bigr)
 \,\le\,  (1+\ln d) \max\Bigl\{\eu^2\, \rad(D_d)\,\sqrt{d}, \ln\bigl(\eps^{-1}\, \rad(D_d)\bigr)\Bigr\},
$$
which is too weak to show weak tractability. Therefore weak tractability for the class
$\tilde C_d^\infty(1)$ of all partial derivatives bounded by one remains an open problem. 

We return to the class $C_d^\infty(1)$ 
of all directional derivatives bounded by one. 
We know that weak tractability holds for this class. Can we say something more
on different notions of tractability for 
$C_d^\infty(1)$?  Yes, we can check that strong polynomial tractability does not hold,
i.e., $n(\eps,C_d^\infty(1))$ cannot be bounded by a polynomial in $\e^{-1}$ independently of $d$.
This follows from~\cite{Wo03} who proved that strong tractability does not hold for the larger class
$\tilde C_d^\infty(1)$ but his proof also applies to the class $C_d^\infty(1)$.
Unfortunately, it is all what we can say. In particular, quasi-polynomial tractability is open
for the class $C_d^\infty(1)$.

\section{Weak and Uniform Weak Tractability}

In this section we study the notions of weak and uniform weak tractability and 
show that the problem is \emph{not} uniformly weakly tractable 
as long as the bounds $L_{j,d}$ on the Lipschitz constants of the 
derivatives decay slower than \emph{any} inverse polynomial in $d$. 
We prove this result without any additional condition on the 
sequence of domains $(D_d)_{d\in\N}$ besides $\lambda_d(D_d)=1$.
This is surprising since we already proved polynomial (in $d$) 
upper bounds on the information complexity if one $L_{j,d}$ decays (roughly) 
faster than $d^{-(j+1)/2}$ and the dimension is large enough 
depending on $\eps$, see Propositions~\ref{prop:ub_lip}, 
\ref{prop:ub_lipgrad_P} and \ref{prop:ub_higher}.

The concept of weak tractability was recently strengthened 
in~\cite{Sied12} by introducing the notion of
\emph{uniform weak tractability}, which holds for multivariate integration
defined over the class $C^k_d(L)$ iff
$$
\lim_{d+\eps^{-1}\to \infty}\frac{\ln\, n(\eps,C^k_d(L))}
{d^\alpha+\eps^{-\alpha}}=0 \ \ \ \ \mbox{for all}\ \ \ \ \ \alpha\in(0,1).
$$
For uniformly weak tractability,
$n(\eps,C^k_d(L))$ is \emph{not exponential}
in $d^{\,\alpha}$ and $\eps^{-\alpha}$ for all $\alpha\in(0,1)$.

Now assume that the double sequence $L=(L_{j,d})_{j,d\in\N}$ satisfies
\[
\limsup_{d\to\infty}L_{j,d} \, d^{\,m} > 0
\]
for all $j\in\{0,1,\dots,k\}$ and some $m<\infty$.
This means 
that there exists a constant $c>0$ for which
$L_{j,d}\ge c d^{-m}$ for infinitely many $d$.

We prove that under this condition the problem of numerical integration 
for the class $\widebar{C}_d^k(L)$, 
see~Section~\ref{subsec:Function Classes},
is not uniformly weakly tractable, 
independent from the sequence $(D_d)_{d\in\N}$. 
This also implies that the problem for $C_d^k(L)$ is not uniformly weakly 
tractable.
For this recall the definition of the class 
$F_{d,k,\delta}$ from~\cite[Sec.~4]{HNUW12}. 
It follows from the definition of $F_{d,k,\delta}$ that
\[
\wt{F}_{d,k} :=\left\{f|_{D_d} \ \big|\ f\in F_{d,k,\delta}\ \text{ for }\, 
\delta^{-1}=2\sqrt{18\eu\pi}\right\} 
\subset \widebar{C}_d^k(d^m L)
\]
for infinitely many $d\in\N$.
This leads to
\[
n(\eps d^{-m},C^k_d(L)) \,=\, n(\eps,d^m C^k_d(L)) 
\,\ge\, n(\eps,C^k_d(d^m L)) \,\ge\, n(\eps,\widebar{C}^k_d(d^m L))
\,\ge\, n(\eps,\wt{F}_{d,k})
\]
for all $\eps\in(0,1)$ and infinitely many $d$. 
We know from~\cite[Thm.~2]{HNUW12} that the right hand side of this inequality 
is bounded from below by $(1-\eps) 2^d$, 
see also ~\cite[Remark~1]{HNUW12}.
We now choose
a sequence $(d_i,\eps_i)_{i\in\N}=(d_i,\frac{1}{2}d_i^{-m})_{i\in\N}$, 
such that $\lim_{i}d_i=\infty$ and 
$n(\eps_i,C^k_{d_i}(L))\ge 2^{d_i-1}$ for all $i\in\N$.
The
limit in the definition of uniform weak tractability is then lower
bounded by
\[
\limsup_{d\to\infty}\frac{\ln\, n(\frac{1}{2}d^{-m}, C^k_d(L))}
        {d^\alpha+\bigl(\frac{1}{2}d^{-m}\bigr)^{-\alpha}} 
\,\ge\, \lim_{i\to\infty}\frac{(d_i-1) \ln2}
        {d_i^\alpha+2^\alpha d_i^{\alpha m}} 
\,>\, 0
\]
if we take $\alpha\le m^{-1}$. 
This contradicts uniform weak tractability for the classes 
$C^k_{d}(L)$ for all finite $k$.

We now 
turn to the case $k=\infty$. 
We prove 
uniform weak tractability for the classes $C_d^\infty(L)$ if 
\[
L_{j,d} \,\le\, c_d\;j!\, a_d^{-j}\, R_d^{-j-1} \,d^{-\frac{j+1}{2}} 
\]
for all $j,d\in\N$ and some $a_d,c_d>1$ that may depend on $d$, 
where $R_d = \rad(D_d)/\sqrt{d}$. 
The upper bound on the information complexity from
Proposition~\ref{prop:ub_infinity2} (see the last inequality 
in the proof) implies that
\[
\ln\, n(\eps,C^\infty_d(L)) 
\,\le\, (1+\ln c_d) (1+1/\ln a_d) (1-\ln\eps)(1+\ln d).
\]

Plugging this into the definitions of weak and uniform weak 
tractability 
we obtain

\begin{thm} \label{prop:uwt}
Let $(D_d)_{d\in\N}$ be a sequence of convex sets with $\lambda(D_d)=1$ and define  
$R_d = \rad(D_d)/\sqrt{d}$.
Additionally assume that 
\[
L_{j,d} \,\le\, c_d\;j!\, a_d^{-j}\, R_d^{-j-1} \,d^{-\frac{j+1}{2}} 
\ \ \ 
\mbox{for all}\ \  j,d\in\N. 
\]
Then 
\begin{itemize}
\item
the problem of numerical integration for the 
classes $C_d^\infty(L)$ is 
uniformly weakly tractable if
\[
c_d \,\le\, d^m \quad\text{ and }\quad a_d \,\ge\, 1+1/(1+\ln d)^m
\]
for some $m<\infty$, 
\item
the problem of numerical integration for the 
classes $C_d^\infty(L)$ is 
weakly tractable if
\[
c_d \,\le\, \exp( d^{\,b_1}) \quad\text{ and }\quad 
a_d \,\ge\, 1+1/d^{\,b_2}
\]
for some $b_1,b_2\in[0,1)$ with $b_1+b_2<1$. 
\end{itemize}
\end{thm}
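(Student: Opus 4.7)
The plan is to reduce the theorem to the explicit complexity estimate already proved in Proposition~\ref{prop:ub_infinity2}. A careful rereading of that proof shows that the constants $c$ and $a$ are never required to be independent of $d$: the choice $k_\eps=\lceil\log_{a_d}(c_d/\eps)\rceil$ together with the bound on $\binom{d+k_\eps}{k_\eps}$ produces, with $d$-dependent $c_d,a_d$, the inequality
\[
\ln\bigl(n(\eps,C^\infty_d(L))\bigr) \,\le\, (1+\ln c_d)\bigl(1+1/\ln a_d\bigr)(1-\ln\eps)(1+\ln d),
\]
which I would take as the starting point and into which I would simply substitute the two sets of hypotheses.

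For the uniform weak tractability claim, the assumption $c_d\le d^m$ gives $1+\ln c_d\le(m+1)(1+\ln d)$, while applying $\ln(1+x)\ge x/2$ (valid for $x\in(0,1)$) to $x=(1+\ln d)^{-m}$ yields $1+1/\ln a_d\le 1+2(1+\ln d)^m$ for $d$ large enough. Multiplying, the upper bound collapses to
\[
\ln\bigl(n(\eps,C^\infty_d(L))\bigr) \,\le\, C_m\,(1+\ln d)^{m+2}(1-\ln\eps)
\]
for a constant $C_m<\infty$. To verify the limit in the definition of uniform weak tractability I would fix $\alpha\in(0,1)$ and split according to whether $d\ge\eps^{-1}$ or $d<\eps^{-1}$. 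In the first case $-\ln\eps\le\ln d$, so the quotient is bounded by $C_m(1+\ln d)^{m+3}/d^\alpha$, which tends to $0$ along any sequence on which $d\to\infty$; in the second case $\ln d\le-\ln\eps$, so the quotient is bounded by $C_m(1-\ln\eps)^{m+3}/\eps^{-\alpha}$, which tends to $0$ as $\eps\to 0$. Since $d+\eps^{-1}\to\infty$ forces the active variable in the current case to diverge, the ratio tends to zero.

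For the weak tractability claim, $c_d\le\exp(d^{b_1})$ gives $1+\ln c_d\le 1+d^{b_1}$ and $a_d\ge 1+d^{-b_2}$ gives $1+1/\ln a_d\le 1+2d^{b_2}$ (again by $\ln(1+x)\ge x/2$), so
\[
\ln\bigl(n(\eps,C^\infty_d(L))\bigr) \,\le\, C\,d^{\,b_1+b_2}(1+\ln d)(1-\ln\eps).
\]
The same case split controls the quotient by $d+\eps^{-1}$: on $\{d\ge\eps^{-1}\}$ it is at most $Cd^{\,b_1+b_2-1}(1+\ln d)^2$, which tends to $0$ as $d\to\infty$ because $b_1+b_2<1$; on $\{d<\eps^{-1}\}$ it is at most $C\eps^{\,1-b_1-b_2}(1-\ln\eps)^2$, which tends to $0$ as $\eps\to 0$ for the same reason.

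The only real obstacle is the bookkeeping verification that the proof of Proposition~\ref{prop:ub_infinity2} goes through verbatim with $d$-dependent $(c_d,a_d)$; once this is granted, both parts reduce to routine asymptotic comparisons, complicated only by the two-variable nature of the limits in the definitions of weak and uniform weak tractability, which is precisely why I would use the symmetric case split between $d\ge\eps^{-1}$ and $d<\eps^{-1}$.
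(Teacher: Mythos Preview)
Your proposal is correct and follows exactly the paper's approach: the paper's proof consists solely of observing that the last inequality in the proof of Proposition~\ref{prop:ub_infinity2} holds with $d$-dependent $c_d,a_d$, and then saying ``plugging this into the definitions of weak and uniform weak tractability we obtain'' the theorem. You have simply carried out that plugging-in explicitly, and the case split $d\ge\eps^{-1}$ versus $d<\eps^{-1}$ is a clean way to handle the two-variable limits.
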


Theorem~\ref{prop:uwt} states, in particular, that
weak tractability of numerical integration holds 
for the unit cube $D_d=(0,1)^d$
and for the classes $C_d^\infty(L)$ 
if we take $b_1=\tfrac12$, $b_2<\tfrac12$, $c_d=\exp(\sqrt{d})$, $a_d=1+d^{-b_2}$, $R_d=\tfrac12$,
and with bound 
\[
L_{j,d} \,\le\, 2\,j!\, \left(\frac2{1+d^{-b_2}}\right)^j \,d^{-\frac{j+1}{2}}\, \eu^{\sqrt{d}}
\]
for all $j,d\in\N$. It can be checked that we cannot take $L_{j,d}=\mbox{constant}>0$ to satisfy the last
inequality. This shows that the conditions on $L_{j,d}$ in Corollary~\ref{cor:ub_infinity_unit_ball}
and Theorem~\ref{prop:uwt} are different.

\bigskip

\noindent
{\bf Acknowledgement.}
We thank Winfried Sickel for valuable comments and the reference~\cite{Rv90}.
We thank Jan Vyb\'\i ral for valuable comments and an early version 
of his paper \cite{Vyb13}.

\end{document}